\documentclass[pdflatex,sn-mathphys-num]{sn-jnl}% Math and Physical Sciences Numbered Reference Style
\usepackage{graphicx}%
\usepackage{multirow}%
\usepackage{amsmath,amssymb,amsfonts}%
\usepackage{amsthm}%
\usepackage{mathrsfs}%
\usepackage[title]{appendix}%
\usepackage{xcolor}%
\usepackage{textcomp}%
\usepackage{manyfoot}%
\usepackage{booktabs}%
\usepackage{algorithm}%
\usepackage{algorithmicx}%
\usepackage{algpseudocode}%
\usepackage{listings}%
\usepackage{graphicx}
\usepackage{subcaption}

\theoremstyle{definition}
\newtheorem{theorem}{Theorem}[section]
\newtheorem{proposition}[theorem]{Proposition}
\newtheorem{lemma}[theorem]{Lemma}
\newtheorem{corollary}[theorem]{Corollary}
\theoremstyle{definition}
\newtheorem{definition}[theorem]{Definition}
\newtheorem{example}[theorem]{Example}
\theoremstyle{remark}
\newtheorem{remark}[theorem]{Remark}
\usepackage{mathtools}
\usepackage{mathrsfs}
\usepackage{amssymb}

\allowdisplaybreaks[1]
\begin{document}

\title[Convolution Operators on Weighted Hahn Spaces]{Convolution Operators on Weighted Hahn Spaces}

%%=============================================================%%
%% GivenName	-> \fnm{Joergen W.}
%% Particle	-> \spfx{van der} -> surname prefix
%% FamilyName	-> \sur{Ploeg}
%% Suffix	-> \sfx{IV}
%% \author*[1,2]{\fnm{Joergen W.} \spfx{van der} \sur{Ploeg} 
%%  \sfx{IV}}\email{iauthor@gmail.com}
%%=============================================================%%
\author[1]{\fnm{Sayan} \sur{Saha}}\email{sayans@iitbhilai.ac.in}
\equalcont{These authors contributed equally to this work.}

\author*[1]{\fnm{Arnab} \sur{Patra}}\email{arnabp@iitbhilai.ac.in}

\equalcont{These authors contributed equally to this work.}

%\author[1,2]{\fnm{Third} \sur{Author}}\email{iiiauthor@gmail.com}
%\equalcont{These authors contributed equally to this work.}

\affil[1]{\orgdiv{Department of Mathematics}, \orgname{Indian Institute of Technology Bhilai}, \orgaddress{\street{Kutelabhata}, \city{Durg}, \postcode{491002}, \state{Chhattisgarh}, \country{India}}}

%\affil[2]{\orgdiv{Department}, \orgname{Organization}, \orgaddress{\street{Street}, \city{City}, \postcode{10587}, \state{State}, \country{Country}}}

%\affil[3]{\orgdiv{Department}, \orgname{Organization}, \orgaddress{\street{Street}, \city{City}, \postcode{610101}, \state{State}, \country{Country}}}

%%==================================%%
%% Sample for unstructured abstract %%
%%==================================%%

\abstract{This paper studies the convolution operator on weighted Hahn sequence spaces. The boundedness and compactness of these operators, together with the multiplier algebras of the weighted Hahn space and its dual, are investigated. A complete characterization of the spectrum and fine spectrum is obtained, with illustrative examples. The introduction of the weighted framework leads to the emergence of new multiplier and spectral properties.

%This paper studies convolution operators on weighted Hahn sequence spaces. Necessary and sufficient conditions are established for their boundedness and compactness. The multiplier algebras f weighted Hahn space and its dual are analyzed. A complete characterization of the spectrum and fine spectrum is obtained, and illustrative examples are provided. The influence of the weighted framework on the associated multiplier algebras and spectral properties is analyzed.
}

\keywords{Sequence spaces, Spectrum, Convolution operator,  Shift operator}

%%\pacs[JEL Classification]{D8, H51}

\pacs[MSC Classification]{47A10, 47B37, 46A45}

\maketitle
%\tableofcontents
\section{Introduction}\label{sec1} 
A convolution operator is an operator that convolves two functions or arrays to produce  third function, which shows that how the shape of one function modifies the shape of the other function. In the setting of sequence spaces, a discrete convolution operator may be viewed as an operator that associates to two sequences the sequence of coefficients corresponding to the product of the associated power series.
In mathematical terms, if $x = (x_{n})$ and $y = (y_{n})$ are two complex sequences, indexed by $\mathbb{N}_0 = \mathbb{N} \cup \{0\}.$ Their convolution $x*y$ is defined by
\[(x*y) = \big(x_{0}y_{0}, x_{1}y_{0}+x_{0}y_{1}, x_{2}y_{0}+x_{1}y_{1}+x_{0}y_{2}, \cdots \big),\]
Equivalently, if the sequences $x$ and $y$ are viewed as the coefficients of formal power series or polynomials, then 
$x*y$ corresponds precisely to the sequence of coefficients of their product.

For a fixed sequence $a = (a_{n})$, with $a_{0}, a_{1} \neq 0$, the convolution operator $T_{a}$ on the linear sequence space $ \mathbb{C}^{\mathbb{N}_{0}}$ is defined by
\[
T_{a}(x) = a*x,\  x \in  \mathbb{C}^{\mathbb{N}_{0}}.
\]
The matrix representation of the convolution operator $T_{a}$ with respect to the standard ordered basis is
\begin{align*} [T_{a}]=
	\begin{bmatrix}
		a_{0} & 0 & 0 & 0 & \cdots\\
		a_{1} & a_{0} & 0 & 0 & \cdots\\
		a_{2} & a_{1} & a_{0} & 0 & \cdots\\
		a_{3} & a_{2} & a_{1} & a_{0} & \cdots\\
		\vdots & \vdots & \vdots & \vdots & \ddots
	\end{bmatrix}.
\end{align*}
For a linear subspace $X$ of $\mathbb{C}^{\mathbb{N}_{0}},$ two central problems in the study of convolution operators are to determine under which condition $T_a$ maps $X$ into itself continuously, and if this is the case, to identify the spectrum of $T_a$ acting on $X$.

The spectral theory of convolution operators has been extensively investigated in classical sequence spaces, particularly when the defining sequence $a$ has only finitely many nonzero entries. For instance, the spectral properties of the operator $B(r,s)$ corresponding to $a = (r, s, 0, 0, \cdots )$ has been studied by Altay and Ba\c{s}ar \cite{altay2005fine} over the sequence spaces $c_{0}$ and $c$; by Furkan et al. \cite{furkan2006fine} over the sequence spaces $\ell^{1}$ and $bv$; and by Bilgi\c{c} and Furkan \cite{bilgicc2008fine} over the sequence spaces $\ell^{p}$ and $bv_{p}$ $(1 < p < \infty)$. Extension to the three banded operator $B(r,s,t)$ with $a = (r,s,t,0,0,\cdots)$ were studied by Bilgi\c{c} and Furkan \cite{bilgicc2007fine} over the spaces $\ell^1 \ \text{and}\  bv$; by Furkan et al. \cite{furkan2010fine} over the sequence spaces $\ell^p \ \text{and}\ bv_{p}$ for $1<p<\infty$; by Furkan et al. \cite{furkan2007fine} over the sequence space $c_0$ and $c$. More generally, Birbonshi and Srivastava \cite{birbonshi2017some} have studied the fine spectrum of $n$ band triangular matrix with $a = (a_{0}, a_{1}, \cdots,a_{n}, 0, 0, \cdots)$ on the spaces $c_0, \ \ell^p, \ bv_p$ for $1<p< \infty$ while Patra et al. \cite{patra2019some} investigated the fine spectrum of its compact perturbations on $\ell^p$. Recently, the boundedness, compactness and the spectrum of the convolution operators acting over discrete Ces\`aro spaces \cite{ricker2019convolution}, dual Ces\`aro sequence spaces \cite{curbera2023convolution} and Fr\'echet sequence spaces \cite{ricker2019convolutionF} has been studied by various researchers.

With this connection, for $X \subset \mathbb{C}^{\mathbb{N}_{0}},$ another interesting problem is to study the collection of all such sequences $a$ such that the convolution $a*b \in X$ for all $b \in X.$ This collection is known as multiplier of $X$ and denoted by $\mathscr{M}(X).$ The multiplier of some of the classical sequence spaces are mentioned below:
\begin{enumerate}
    \item[(i)] $\mathscr{M}(\ell^1) = \mathscr{M}(\ell^{\infty}) = \ell^1$ \cite{nikol1966spaces},
    \item[(ii)] $\mathscr{M}(\ell^p) = \ell^q$ where $1/p+1/q = 1$ \cite{nikol1966spaces},
    \item[(iii)] $\mathscr{M}(\mbox{ces}_p) = \ell^1$ where $1 < p< \infty$ \cite{ricker2019convolution},
    \item[(iv)] $\mathscr{M}(\mathbb{C}^{\mathbb{N}}) = \mathbb{C}^{\mathbb{N}}$ \cite{ricker2019convolutionF},
\end{enumerate}
where $\mbox{ces}_p$ denote the sequence space generated by the Ces\`aro operator.

Our main interest lies in investigating the boundedness, compactness, and the spectral properties of the convolution operator over the weighted Hahn sequence spaces. In 1922, Hahn \cite{hahn1922folgen} introduced the Hahn sequence space $h$ defined by
\[
h = \Biggl\{x \in \mathbb{C}^{\mathbb{N}_{0}} : \sum_{n=0}^{\infty}(n+1) \cdot \big|x_{n} - x_{n+1}\big| < \infty, \mbox{ and } \lim_{n \to \infty} x_n = 0 \Biggr\}. 
\]
There has been an increasing interest to study the spectral properties of bounded linear operators, specially banded matrices defined over $h$. See the recent articles \cite{tuug2022generalized,tug2022domain,malkowsky2021roots,malkowsky2021compact}.

The remaining part of this paper is organised as follows. In Section~\ref{Section 2}, we recall the necessary preliminary definitions and auxiliary results that will be used throughout the paper. Section~\ref{Sect3} is devoted to the study of various structural properties of the weighted Hahn sequence space $h_d$ and the associated sequence space $bs_d,$ where $d$ denotes a weight vector and $bs_d$ is isomorphic to the dual of $h_d$. In Section~\ref{sect4}, we establish a complete description of the boundedness and compactness of convolution operators. In particular, Theorems~\ref{bdd-hrhs} and~\ref{cpt-hrhs} provide necessary and sufficient conditions for the boundedness and compactness, respectively, of the convolution operator $T_{a}: h_{r} \to h_{s},$ where $r$ and $s$ are (possibly distinct) weight vectors corresponding to the domain and codomain. It is further shown in Corollary~\ref{cpt-h} that the convolution operator $T_a$ on the Hahn space $h$ is never compact. With this connection, the multiplier algebras of the sequence spaces $h_d$ and $bs_d$ are discussed in Section \ref{sect5}. Section \ref{sect6} is devoted to study the spectrum and fine spectrum of $T_a \in \mathcal{B}(h_d).$ We provide a complete description of the spectrum (Corollary~\ref{s-Ta-hd}), point spectrum (Proposition~\ref{pts-Ta-hd}), residual spectrum (Theorem~\ref{rs-Ta-hd}), and continuous spectrum (Corollary~\ref{cs-Ta-hd}). Finally, we provide illustrative examples to demonstrate the applicability of the obtained results.

\section{Notations and Preliminaries}\label{Section 2}
Let $X$ and $Y$ be complex Banach spaces and $T:X \to Y$ be a bounded linear operator. $\mathcal{R}(T)$ denotes the range of $T$ and $\mathcal{N}(T)$ denotes the kernel of $T$. The set of all bounded linear operators $T: X \to Y$ is denoted by $\mathcal{B}(X,Y)$ with $\mathcal{B}(X,X) = \mathcal{B}(X).$ The operator norm of $T \in \mathcal{B}(X,Y)$ is denoted by $\|T\|_{\mathcal{B}(X,Y)}$ with $\|T\|_{\mathcal{B}(X,X)} = \|T\|_{\mathcal{B}(X)}.$ The adjoint of $T \in \mathcal{B}(X),$ denoted by $T^*$ is a bounded linear operator on the dual space $X^*$ of $X$, is defined by
\[(T^* \zeta)x = \zeta (Tx) \mbox{ for all } \zeta \in X^*, \ x \in X.\]

The spectrum of $T \in \mathcal{B}(X)$, denoted by $\sigma(T,X)$, is the collection of all complex numbers $\lambda$ such that $(T-\lambda I)$ is not invertible on $X$. The resolvent $\rho(T,X)$ of $T$ is the collection of all complex numbers $\lambda$ such that $\lambda \in \mathbb{C}\setminus \sigma(T,X).$ 
%\begin{align*}
%	\sigma(T,X) &=\{\lambda \in \mathbb{C}: (T - \lambda I) \ \text{is not invertible}\} \\
%	&= \{\lambda \in \mathbb{C}: \mathcal{N}(T)\neq \{0\}\ \text{or} \ \mathcal{R}(T)\neq X\},
%\end{align*}
%and
%\begin{align*}
%	\rho(T,X) &= \{(T - \lambda I)\ \text{has a bounded inverse}\}\\
%	&= \{\lambda \in \mathbb{C}: \mathcal{N}(T)= \{0\}\ \text{and} \ \mathcal{R}(T)= X\}.
%\end{align*}
For any point $\lambda \in \sigma(T,X)$ consider the following cases,
\begin{enumerate}
	\item[(i)] $\mathcal{N}(T- \lambda I)\neq \{0\}$,
	\item[(ii)] $\mathcal{N}(T- \lambda I)= \{0\}\ \text{but} \ \mathcal{R}(T- \lambda I)\neq X$.
\end{enumerate}
The collection of points $\lambda \in\mathbb{C}$ for which $\mathcal{N}(T - \lambda I)\neq \{0\}$ i.e., the operator $(T - \lambda I)$ is not injective
is called the point spectrum of $T$ and denoted by  $\sigma_{p}(T,X)$. The point spectrum of $T$
is precisely the set of all eigenvalues of $T$. The collection of points $\lambda \in \mathbb{C}$ such that, $\mathcal{N}(T - \lambda I)= \{0\}\ \text{and}\ \overline{\mathcal{R}(T-\lambda I)}= X$ but $\mathcal{R}(T-\lambda I) \neq X$ is called continuous spectrum of $T$ and denoted by $\sigma_{c}(T,X)$ and the collection of points $\lambda \in \mathbb{C}$ such that, $\mathcal{N}(T - \lambda I)= \{0\}\ \text{and}\ \overline{\mathcal{R}(T-\lambda I)}\neq X$ is called residual spectrum of $T$ and denoted by $\sigma_{r}(T,X)$. These three parts of the spectrum form a disjoint partition such that
\[\sigma(T,X)= \sigma_{p}(T,X)\cup \sigma_{c}(T,X)\cup \sigma_{r}(T,X). \]
For a bounded linear operator $T,\ \sigma(T,X)$ is always a non-empty, bounded, and closed subset of $\mathbb{C}$. The spectral radius of $T$ is denoted by $r(T)$, is defined by  \[r(T)=\sup\{|\lambda|: \lambda\in\sigma(T,X)\}.\]
Let $\mathcal{F}(T)$ denotes the family of all functions $f$ which are analytic on some neighborhood of $\sigma(T,X)$. Then we have the following theorem.
\begin{theorem}[\textbf{Spectral mapping theorem}]\cite[p.569]{dunford1988linear} \label{spt-th}
	If $f$ is in $\mathcal{F}(T)$, then \[f(\sigma(T,X))=\sigma(f(T),X).\]
\end{theorem}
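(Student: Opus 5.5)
The plan is the classical Riesz--Dunford holomorphic functional calculus argument; the paper cites this statement from Dunford--Schwartz, so we only need to recall how it goes. Fix $f \in \mathcal{F}(T)$, analytic on an open set $U \supseteq \sigma(T,X)$. Choose a cycle $\Gamma$ in $U \setminus \sigma(T,X)$ that winds once around each point of $\sigma(T,X)$ and zero times around each point of $\mathbb{C}\setminus U$. Then define
\[
f(T) = \frac{1}{2\pi i}\int_{\Gamma} f(\zeta)(\zeta I - T)^{-1}\,d\zeta .
\]
We first record the standard facts. The map $f \mapsto f(T)$ is independent of $\Gamma$ (Cauchy's theorem for operator-valued analytic functions). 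It is an algebra homomorphism, with multiplicativity proved via the resolvent identity. It sends polynomials to the corresponding polynomials in $T$.

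\textbf{Inclusion $f(\sigma(T,X)) \subseteq \sigma(f(T),X)$.} Take $\lambda \in \sigma(T,X)$. Since $\zeta \mapsto f(\zeta) - f(\lambda)$ vanishes at $\lambda$, we can write
\[
f(\zeta) - f(\lambda) = (\zeta - \lambda)\, g(\zeta)
\]
with $g$ analytic on $U$. By multiplicativity,
\[
f(T) - f(\lambda) I = (T - \lambda I)\, g(T) = g(T)\, (T - \lambda I),
\]
since $g(T)$ commutes with $T$. Suppose $f(T) - f(\lambda)I$ had a bounded inverse $S$. Then $g(T)S$ would be a right inverse of $T - \lambda I$, and $S g(T)$ would be a left inverse. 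Hence $T - \lambda I$ would be invertible, contradicting $\lambda \in \sigma(T,X)$.

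\textbf{Inclusion $\sigma(f(T),X) \subseteq f(\sigma(T,X))$.} Take $\mu \notin f(\sigma(T,X))$. Then $f - \mu$ has no zeros on the compact set $\sigma(T,X)$. Shrinking $U$ to a neighborhood of $\sigma(T,X)$ on which $f - \mu$ does not vanish, the function $h = 1/(f-\mu)$ is analytic there. Multiplicativity, together with $1(T) = I$, gives
\[
h(T)\,(f(T) - \mu I) = (f(T) - \mu I)\, h(T) = I.
\]
So $\mu \in \rho(f(T),X)$.

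\textbf{Main obstacle.} The substantive work lies in the functional calculus itself, chiefly the multiplicativity $(fg)(T) = f(T)g(T)$. This is proved by taking two nested contours, applying the resolvent identity
\[
R(\zeta) - R(\eta) = (\eta - \zeta)\, R(\zeta) R(\eta),
\]
and using Fubini together with Cauchy's formula. The other point needing care is that $1(T) = I$ and $\zeta(T) = T$. This follows by integrating the Neumann series on a large circle and then deforming the contour. Once these are in place, both inclusions are the short algebraic arguments above.
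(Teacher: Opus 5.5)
Your argument is correct. It is the standard Riesz--Dunford proof from the cited source (Dunford--Schwartz, Theorem VII.3.11): the factorization $f(T)-f(\lambda)I=(T-\lambda I)g(T)$ gives one inclusion, and inverting $f-\mu$ on a smaller neighbourhood of $\sigma(T,X)$ gives the other, with the real work in the multiplicativity of the functional calculus. The paper itself gives no proof and only quotes the result, so there is nothing further to compare.
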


%Throughout this paper, let $X,Y$  be a complex Banach spaces and $T$ be a bounded linear operator from $X$ to $Y$. The operator norm of $T$, denoted by $\|T\|_{\mathcal{B}(X,Y)}$, is defined as:
%\begin{align*}
	%\|T\|_{\mathcal{B}(X,Y)} = \sup\{\|T(x)\|:x\in X, \|x\|\leq1\}.
%\end{align*}
%Let $\mathcal{R}(T)$ denotes the range of $T$ and $\mathcal{N}(T)$ denotes the kernel of $T$, i.e.,
%\begin{align*}
%	\mathcal{R}(T) &= \{ T(x): x \in X\}, \\
%	\mathcal{N}(T) &= \{x \in X: T(x)=0\}.\ \ \ \ \ \ \ \ \ \ \ \ \ \
%\end{align*}
%The space of all bounded linear operators from the Banach space $X$ to the Banach space $Y$ is denoted by $\mathcal{B}(X,Y)$$\big(\mathcal{B}(X)\ \text{if}\ X=Y\big)$. The adjoint of $T$, denoted by $T^*$, is
%a bounded linear operator on the dual space $X^*$ of $X$ defined by 
%\begin{align*}
%	(T^*f)(x)=f(Tx),\ \ \ \ \text{for all}\ f \in X^*\ \  \text{and}\ x \in X.
%\end{align*}

Let $\mathbb{N}_0 = \mathbb{N} \cup \{0\}.$ All the sequences and vectors are indexed by $\mathbb{N}_0$. By a sequence space, we understand a linear subspace of space $\mathbb{C}^{\mathbb{N}_{0}}$, the space of all complex sequences. The space of all finitely non-zero sequences, null sequences, convergent sequences, bounded series, convergent series, absolutely $p$-summable $(1 \leq p < \infty)$ sequences and bounded sequences are denoted by $c_{00}, \ c_{0}, \ c, \ bs, \ cs, \ell^p (1 \leq p < \infty), \ \ell^\infty$ respectively.

A sequence space $X$ with a linear topology is called a $K$-space provided each of the maps $p_{n}:X \to \mathbb{C}$ defined by $p_{n}(x)=x_{n}$ is continuous for all $n\in \mathbb{N}_{0}$. A $K$-space $X$ is called an $FK$-space provided $X$ is a complete linear metric space. An $FK$-space whose topology is normable is called a $BK$-space. Given an $FK$-space $X \supset c_{00} $, we denote the $n$-th section of a sequence $x= (x_{k}) \in X $ by $x^{[n]} = \sum_{k=0}^{n}x_{k}e_{k}$, with $e_{k}=\big(0, \cdots, 0, 1, 0, \cdots\big),$ the sequence having $1$ in the $k$-th position for some $k \in \mathbb{N}_{0}$. Let $e$ denotes the vector whose each component is $1$, i.e., $e=(1,1,1,\cdots)$. We say that $x$ has the property 
\begin{enumerate}
	\item[(i)] $AK$ if $x^{[n]} \to x $ as $n \to \infty$, %(Abschnittskonvergenz),
	\item[(ii)] $AB$ if $\big(x^{[n]}\big)$ is bounded, %(Abschnittsbeschr\"anktheit),
	\item[(iii)]  $AD$ if $x \in \overline{c_{00}}$,\\ where $\overline{c_{00}}$ denotes the closure of $c_{00}.$ 
\end{enumerate}

The $\alpha$-, $\beta$- and $\gamma$-dual of a sequence space $X$, which are denoted by $ X^{\alpha},\ X^{\beta} \ \text{and} \ X^{\gamma}$, respectively, are defined by
\begin{enumerate}
	\item[(i)] $X^{\alpha} = \bigl\{x \in \mathbb{C}^{\mathbb{N}_{0}}: xy=(x_{n}y_{n}) \in \ell^{1},\ \text{for all} \ y \in X \bigr\}$,
	\item [(ii)] $X^{\beta} = \bigl\{x \in \mathbb{C}^{\mathbb{N}_{0}}: xy=(x_{n}y_{n}) \in cs,\ \text{for all} \ y \in X \bigr\}$ and
	\item [(iii)] $X^{\gamma} = \bigl\{x \in \mathbb{C}^{\mathbb{N}_{0}}: xy=(x_{n}y_{n}) \in bs,\ \text{for all} \ y \in X \bigr\}$.
\end{enumerate}
The $\alpha$-, $\beta$-, and $\gamma$-duals of a sequence space $X$ are also known as the K{\"o}the-Toeplitz dual, generalized K{\"o}the-Toeplitz dual, and Garling dual, respectively.
\begin{lemma} \cite[Theorem 7.2.9]{wilansky2000summability} \label{b-c-dual}
	Let $ X \supset c_{00}$ be an $FK$-space.
	\begin{enumerate}
		\item[(i)] Then there is a linear one-to-one map $\ \ \widehat{} : X^{\beta} \to X^{*} $.
		\item[(ii)] If $ X $ has $ AK $, then the map  $\ \  \widehat{} : X^{\beta} \to X^{*} $ is an isomorphism.
	\end{enumerate}
\end{lemma}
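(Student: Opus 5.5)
The map will be the standard one, $\widehat{a}(x) = \sum_{k=0}^{\infty} a_k x_k$ for $a \in X^{\beta}$ and $x \in X$; the work is to check that it is a well-defined continuous functional and then to get injectivity and, under $AK$, surjectivity. For $a \in X^{\beta}$ the series converges for every $x \in X$ by the definition of the $\beta$-dual, so $\widehat{a}$ is a well-defined linear functional. Continuity follows from the $FK$ structure. The partial sum functionals $f_n(x) = \sum_{k=0}^{n} a_k x_k$ are finite linear combinations of the coordinate maps $p_k$, hence continuous because $X$ is a $K$-space. Since $f_n(x) \to \widehat{a}(x)$ pointwise on the complete metric linear space $X$, the Banach--Steinhaus theorem for $F$-spaces gives $\widehat{a} \in X^*$. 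Linearity of $a \mapsto \widehat{a}$ is clear.

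Injectivity uses $X \supset c_{00}$. If $\widehat{a} = 0$, then evaluating at $e_k \in X$ gives $a_k = \widehat{a}(e_k) = 0$ for every $k$, so $a = 0$. This proves (i).

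For (ii), assume $X$ has $AK$, so that $x^{[n]} \to x$ in $X$ for every $x \in X$. Given $f \in X^*$, put $a_k = f(e_k)$. By continuity and linearity,
\[
f(x) = \lim_{n\to\infty} f\bigl(x^{[n]}\bigr) = \lim_{n\to\infty} \sum_{k=0}^{n} a_k x_k .
\]
This limit exists for every $x \in X$, so $ax \in cs$ for all $x \in X$, that is, $a \in X^{\beta}$, and $f = \widehat{a}$. Hence the map is onto, and together with (i) it is a linear bijection $X^{\beta} \to X^*$, i.e.\ an (algebraic) isomorphism. If a topological isomorphism is intended for a $BK$-space, one transports the dual norm $\|a\| := \|\widehat{a}\|$ to $X^{\beta}$.

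The only step that is not mere bookkeeping is the continuity of $\widehat{a}$ in (i). It requires the uniform boundedness principle in the non-normed $FK$ setting, which is where completeness of $X$ enters; everything else is direct verification.
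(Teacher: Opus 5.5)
Your proof is correct and matches the standard proof of the cited result; the paper gives no proof of its own and only refers to Wilansky's Theorem 7.2.9. Continuity of $\widehat{a}$ follows from the Banach--Steinhaus theorem on the complete metric linear space $X$, applied to the continuous partial-sum functionals; injectivity follows by evaluating at $e_k \in c_{00} \subset X$; and under $AK$, surjectivity follows from $f(x)=\lim_{n} f\bigl(x^{[n]}\bigr)=\sum_{k} f(e_k)\,x_k$. You are right that the continuity of $\widehat{a}$ is the one non-trivial step and the place where completeness of $X$ is needed.
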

%The $\alpha$-, $\beta$- and $\gamma$-dual of $c_{00}$ are all equal to $\mathbb{C}^{\mathbb{N}_{0}}$; while the $\alpha$-, $\beta$- and $\gamma$-dual of $c_{0}, c \ \text{and}\ \ell^\infty $ are $\ell^{1}$; for the space $ \ell^{1}$ these are all equal to $\ell^{\infty}$.

%\textcolor{blue}{
%Let $X \subseteq \mathbb{C}^{\mathbb{N}_{0}}$ be any sequence space. The sequence $a = (a_{n}) $ defines a multiplier on $X$ if the convolution $ a*x \in X $ for every $x \in X $ and the collection of all such $ a $ is denoted by $\mathscr{M}(X)$, i.e., 
%\[
%\mathscr{M}(X) = \Bigl\{a \in \mathbb{C}^{\mathbb{N}_{0}} : a*x \in X \ \text{for all} \ x \in X \Bigr\}.
%\]
%From \cite{nikol1966spaces} it is known that 
%\begin{enumerate}
%	\item [(i)] $\ell^1 \subsetneq \mathscr{M}(\ell^p) \subsetneq \ell^p$, for $1<p<\infty$;
%	\item[(ii)] $ \mathscr{M}(\ell^p) = \mathscr{M}(\ell^{p\prime})$, for $1/p + 1/p{\prime} = 1$;
%	\item[(iii)] $\mathscr{M}(\ell^{p_{1}}) \subsetneq \mathscr{M}(\ell^{p_{2}})$, for $1 \leq p_{1} < p_{2} \leq 2$.
%\end{enumerate}
%}

Let $T: X \to Y$ be a bounded linear operator, where $X$ and $Y$ are Banach spaces with Schauder bases. If $T$ admits a matrix representation, we denote it by $[T]$. For $n, k \in \mathbb{N}_0$, we write $[T]_{n,*}$ and $[T]_{*,k}$ for the $n$-th row and $k$-th column of $[T]$, respectively.

Finally, let $A = \big(a_{nk}\big)$ be an infinite matrix and $X$ and $Y$ be any two sequence spaces. If $Ax$ exists and is in $Y$ for every sequence $x=\big(x_{n}\big) \in X$, then we say that $A$ defines a matrix mapping from $X$ into $Y$, and we denote it by writing $A:X \to Y$. By $(X:Y)$ we denote the class of all matrices $A$ such that $A:X \to Y $. Thus, $A\in (X:Y)$ if and only if $( Ax )_{n}= \sum_{k=0}^{\infty}a_{nk}x_{k}$ is finite and $Ax = \big( (Ax)_{n}\big) \in Y$ for every $n \in \mathbb{N}_{0}$ and $x \in X$. Also let $A_{(n)}$ and $A^{(k)}$ denotes the $n$-th row and $k$-th column of the matrix $A$ respectively for $n,k \in \mathbb{N}_{0}$. The following result is important in this sequel.
\begin{lemma} \cite[Proposition 10]{rao1990hahn}  \label{lemma1}
	Let ${A} = (a_{nk})$ be an infinite matrix. Then $ \textbf{A} \in (h:h)$ if and only if
	\begin{enumerate} %[label=(\roman*)]
		\item[(i)] \label{C1}  $ \underset{n\rightarrow \infty}{\lim}a_{nk} = 0 , \forall k \in \mathbb{N}_{0} $,
		\item[(ii)] \label{C2} $ \sum_{n=0}^{\infty}(n+1) |a_{nk} - a_{n+1,k}| < \infty, \forall k \in \mathbb{N}_{0} $,
		\item[(iii)] \label{C3} $ \ \underset{m}{\sup}\ \frac{1}{m+1}  \sum_{n=0}^{\infty}(n+1) |\sum_{k=0}^{m}(a_{nk} - a_{n+1,k})| < \infty $.
	\end{enumerate}
\end{lemma}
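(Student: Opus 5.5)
The statement is quoted from \cite{rao1990hahn}. The plan is to reduce it to the elementary fact that a linear map out of an $\ell^1$-type space is bounded if and only if it is uniformly bounded on the ``atoms''.

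\textbf{Step 1: an $\ell^1$-model of $h$.} Equip $h$ with the norm $\|x\|_h=\sum_{n}(n+1)|x_n-x_{n+1}|$. For $x\in h$ put $y_j=(j+1)(x_j-x_{j+1})$, so that $y\in\ell^1$ and $\|y\|_1=\|x\|_h$. Because $x_n\to 0$, we can telescope to get $x_k=\sum_{j\ge k} y_j/(j+1)$. Define the atoms
\[
u_j=\frac{1}{j+1}\sum_{k=0}^{j}e_k .
\]
Then $\|u_j\|_h=1$ and $x=\sum_j y_j u_j$, with the series converging absolutely in $h$. Hence $x\mapsto y$ is an isometry of $h$ onto $\ell^1$, and in particular $h$ is a $BK$-space. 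Two consequences will be used below:
\[
|z_n|\le \sum_{i\ge n}|z_i-z_{i+1}|\le\|z\|_h \quad\text{for } z\in h,
\qquad
(K+1)|x_K|\le\sum_{j\ge K}|y_j|\to 0 .
\]

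\textbf{Step 2: necessity.} Suppose $A\in(h:h)$. Since $e_k\in h$, the column $A^{(k)}=Ae_k$ lies in $h$, which gives conditions (i) and (ii). Since $h$ is a $BK$-space, the closed graph theorem makes $A:h\to h$ bounded. Therefore $\sup_m\|Au_m\|_h<\infty$. Now
\[
Au_m=\frac{1}{m+1}\sum_{k\le m}A^{(k)},
\]
and its $h$-norm is exactly the expression inside the supremum in (iii). So (iii) holds.

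\textbf{Step 3: sufficiency.} Assume (i)--(iii). By (i) and (ii), each $v_m:=Au_m=\frac{1}{m+1}\sum_{k\le m}A^{(k)}$ lies in $h$, and by (iii) $\|v_m\|_h\le C$ for all $m$.
\begin{itemize}
\item[(a)] \emph{The row series $\sum_k a_{nk}x_k$ converge.} By Abel summation,
\[
\sum_{k\le K}a_{nk}x_k=\sum_{j<K}y_j\,(v_j)_n+(K+1)x_K\,(v_K)_n .
\]
The first sum converges as $K\to\infty$, because $|(v_j)_n|\le\|v_j\|_h\le C$ by the first inequality of Step 1. The boundary term tends to $0$ by the second inequality of Step 1.
\item[(b)] \emph{Identification of $Ax$.} From (a), $(Ax)_n=\sum_j y_j (v_j)_n$ for every $n$. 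The series $\sum_j y_jv_j$ converges absolutely in $h$, with norm at most $C\|y\|_1$. Convergence in $h$ implies coordinatewise convergence, so $Ax=\sum_j y_jv_j\in h$.
\end{itemize}
This shows $A\in(h:h)$.

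The main obstacle is Step 3(a), namely justifying that the row series converge and that the boundary term in the Abel summation vanishes. Both rest on the two inequalities recorded in Step 1. Everything else is formal once the isometry $h\cong\ell^1$ is in place.
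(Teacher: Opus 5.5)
Your proof is correct. The paper gives no proof of this lemma; it is quoted from \cite{rao1990hahn}. The closest argument in the paper is the proof of Theorem~\ref{bdd-hrhs}, which applies Lemma~\ref{BddXY} (Wilansky's determining-set theorem) with the determining set $\{\frac{1}{r_m}e^{[m]}\}$. For $r_n=s_n=n+1$ these are exactly your atoms $u_m$, and the condition $\sup_m\|Au_m\|_h<\infty$ is exactly (iii).

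So you use the same mechanism, but you prove the relevant instance by hand instead of citing the abstract theorem. Your isometry $h\cong\ell^1$ replaces the statement that the unit ball of $h\cap c_{00}$ is the absolutely convex hull of the $u_m$. Your boundary estimate $(K+1)|x_K|\le\sum_{j\ge K}|y_j|$ is the $d_n=n+1$ case of the paper's inclusion $h_d\subseteq c_0(d)$.

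This route gives you three things:
\begin{itemize}
\item a self-contained sufficiency proof with no FK machinery;
\item the exact norm $\|A\|_{\mathcal{B}(h)}=\sup_m\|Au_m\|_h$, which is the formula the paper quotes from \cite{malkowsky2021compact};
\item an argument that carries over, with only notational changes, to $h_r\to h_s$, using atoms $\frac{1}{r_m}e^{[m]}$, $y_j=r_j(x_j-x_{j+1})$, and monotonicity of $r$ for the boundary term.
\end{itemize}
It also shows, as in the last step of the proof of Theorem~\ref{bdd-hrhs}, that (ii) is implied by (iii), via $A^{(k)}=(k+1)v_k-kv_{k-1}$.

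You compressed two routine steps.
\begin{itemize}
\item Surjectivity of $x\mapsto y$ follows from the inverse $y\mapsto\big(\sum_{j\ge k}y_j/(j+1)\big)_k$. You need it for completeness of $h$, which you use both in the closed graph theorem and in Step 3(b).
\item Closedness of the graph of $A$ requires the row functionals $x\mapsto\sum_k a_{nk}x_k$ to be continuous on $h$. This follows from Banach--Steinhaus applied to their partial sums.
\end{itemize}
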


%\section{Convolution operator on weighted Hahn sequence spaces} \label{Section 3}
\section{Weighted Hahn space and its dual} \label{Sect3}

For an arbitrary complex sequence $d = (d_{n})$ with $d_{n} \neq 0$ for all $n \in \mathbb{N}_{0}$, the weighted Hahn sequence space $h_d$ was introduced by Goes \cite{goes1972sequences} where
\[
h_{d} = \Bigl\{x \in  \mathbb{C}^{\mathbb{N}_{0}} : \sum_{n=0}^{\infty} \big|d_{n}\big|\cdot \big|x_{n} - x_{n+1}\big| < \infty\Bigr\}\cap c_{0}.
\]
The norm of any $ x \in h_{d} $ is defined as
\[
\| x \|_{h_{d}} = \sum_{n=0}^{\infty} \big|d_{n}\big|\cdot \big|x_{n} - x_{n+1}\big|.
\]
In particular, if $ d_{n} = n+1 $ for all $n \in \mathbb{N}_{0} $, then $ h_{d} $ reduces to the classical Hahn sequence space $h$, which is defined as follows
\[
h = \Biggl\{x \in \mathbb{C}^{\mathbb{N}_{0}} : \sum_{n=0}^{\infty}(n+1) \cdot \big|x_{n} - x_{n+1}\big| < \infty \Biggr\}\cap c_{0}, 
\]
with 
\[
\| x \|_{h} = \sum_{n=0}^{\infty} \big(n+1\big)\cdot \big|x_{n} - x_{n+1}\big|.
\]
In \cite{malkowsky2021compact}, the boundedness and compactness of any operator $T : h_d \to h_d$ has been studied extensively under the assumption on the weight vector $d$ as an unbounded, monotonic increasing sequence of positive real numbers. Indeed, if we allow $d$ to be bounded then $h_d =h_e = bv_0.$ Therefore, throughout this paper, we assume that the weight vectors are unbounded, monotonic increasing sequence of positive real numbers. The authors also proved that $\big(h_{d}, \|  \cdot \|_{h_{d}}\big)$ is a $BK$ space with $AK$ \cite[Proposition 2.1]{malkowsky2021compact}. Hence, $\big(h_{d}, \|  \cdot \|_{h_{d}}\big),$ forms a Banach space.

For any bounded linear operator $T:h_{d} \to h_{d}$ the operator norm of $T$ is given by \cite[\text{Theorem 3.9}]{malkowsky2021compact} 
\begin{align*}
    \|T\|_{\mathcal{B}(h_{d})} = &\ \underset{m}{\sup}\ \Big( \frac{1}{d_{m}} \sum_{n=0}^{\infty} d_{n} \Big| \sum_{k=0}^{m} (a_{nk} - a_{n+1,k}) \Big| \Big).
\end{align*}
The operator norm $\|T\|_{\mathcal{B}(h)}$ of $T \in \mathcal{B}(h)$ can be easily obtained by putting $d_n = n+1$ in the above equality.

The following lemma gives an inclusion relation between weighted Hahn space and weighted null sequence space $c_0(d)$ which is defined by
\[c_0(d) = \{x \in \mathbb{C}^{\mathbb{N}_{0}} : d_{n}|x_{n}| \to 0 \ \mathrm{ as } \ n \to \infty\}.\]
\begin{lemma}
	Let $d= (d_{n})$ be a monotonic increasing, and unbounded sequence of positive real numbers. Then $h_{d} \subseteq c_{0}(d) $.
\end{lemma}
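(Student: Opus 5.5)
The approach is to write $x_n$ as a telescoping tail and then use the monotonicity of $d$ to pull the weight inside the sum. Let $x \in h_d$. Since $x \in c_0$, for every $n \in \mathbb{N}_0$ we have
\[
x_n = \sum_{k=n}^{\infty} (x_k - x_{k+1}),
\]
because the partial sums $\sum_{k=n}^{N}(x_k - x_{k+1}) = x_n - x_{N+1}$ converge to $x_n$ as $N \to \infty$. Taking absolute values and multiplying by $d_n$, the monotonicity $d_n \le d_k$ for $k \ge n$ gives
\[
d_n |x_n| \le \sum_{k=n}^{\infty} d_n |x_k - x_{k+1}| \le \sum_{k=n}^{\infty} d_k |x_k - x_{k+1}|.
\]

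The right-hand side is the tail of the convergent series $\sum_{k} d_k |x_k - x_{k+1}| = \|x\|_{h_d} < \infty$. Tails of a convergent series of nonnegative terms tend to zero, so $d_n |x_n| \to 0$ as $n \to \infty$, that is, $x \in c_0(d)$. This proves $h_d \subseteq c_0(d)$.

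As a by-product the argument yields the estimate $\sup_n d_n |x_n| \le \|x\|_{h_d}$, which may be useful later. There is no real obstacle in the argument. The only points needing care are that the telescoping representation of $x_n$ really uses $x \in c_0$ (and not merely the summability condition), and that positivity and monotonicity of $d$ justify the comparison $d_n \le d_k$ for $k \ge n$. Unboundedness of $d$ is not needed for the inclusion itself; it only makes the statement non-trivial.
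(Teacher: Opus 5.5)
Your proof is correct and is essentially the paper's argument. The paper likewise uses the monotonicity of $d$ to bound $d_N|x_N - x_{N+k}|$ by $\sum_{n=N}^{N+k-1} d_n|x_n - x_{n+1}|$, then lets $k\to\infty$ (this is where $x\in c_0$ enters) to place $d_N|x_N|$ below a tail of $\|x\|_{h_d}$; your version applies the bound at every index $n$ directly, which also makes explicit the paper's concluding step that $d_m|x_m|<\epsilon$ for all $m\ge N$.
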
		
\begin{proof}
	Let $x \in h_{d}$. Then $\| x \|_{h_{d}} = \sum_{n=0}^{\infty} d_{n} |x_{n} - x_{n+1}| < \infty $, and $x_{n} \to 0$ as $n \to \infty$. For a preassigned positive $\epsilon$ there exists $N \in \mathbb{N}_{0}$ such that $\sum_{n=N}^{\infty}d_{n}\big|x_{n}-x_{n+1}\big| < \epsilon$. Now for $k \in \mathbb{N}$
	\begin{align*}
		d_{N} \big|x_{N} - x_{N+k}\big| 
		& \leq \sum_{n=0}^{k-1}d_{N} \big|x_{N+n} - x_{N+n+1}\big| \\
		& \leq \sum_{n=0}^{k-1}d_{N+n} \big|x_{N+n} - x_{N+n+1}\big| \\
		& = \sum_{n=N}^{N+k-1} d_{n} \big|x_{n} - x_{n+1}\big|.
	\end{align*}
	By letting $k \to \infty,$ we have $d_{N}\big|x_{N}\big| \leq \sum_{n=N}^{\infty}d_{n} \big|x_{n} - x_{n+1}\big| < \epsilon $. Hence for all $m \geq N,\ d_{m}|x_{m}|< \epsilon $. This implies $x \in c_{0}(d)$.
\end{proof}

As $\big(h_{d}, \|  \cdot \|_{h_{d}}\big)$ is a $BK$ space with $AK,$ by Lemma \ref{b-c-dual} the continuous and $\beta$-duals $ h_{d}^* $ and $ h_{d}^{\beta} $ of $ h_{d} $ are norm isomorphic \cite[Proposition 2.3]{malkowsky2021compact}.
The continuous dual of the weighted Hahn sequence space $h_{d}$ is isomorphic to the sequence space $bs_d$ \cite[Proposition 2.3]{malkowsky2021compact} where  
\[
bs_{d} = \Biggl\{ x \in  \mathbb{C}^{\mathbb{N}_{0}} :  \| x  \|_{bs_{d}} = \underset{m}{\sup} \ \frac{1}{d_{m}} \ \Biggl|\sum_{k=0}^{m}x_{k}\Biggr| < \infty \Biggr\}.
\] 
For the Hahn sequence space $h$, the continuous dual is isomorphic to $\sigma_{\infty}$ which is defined as follows:
\[
\sigma_{\infty} = \Biggl\{ x \in \mathbb{C}^{\mathbb{N}_{0}} : \| x  \|_{\sigma_{\infty}} = \underset{m}{\sup}\ \frac{1}{m+1}\Biggl|\sum_{k=0}^{m} x_{k}\Biggr| < \infty\Biggr\}.
\]

In the next few results, we investigate some of the key properties of $bs_d$.
\begin{lemma} \label{bsd-nAK}
	The space $ \big( bs_{d},\ \big\| \cdot  \big\|_{bs_{d}} \big ) $ does not possess the $AK$ property.
\end{lemma}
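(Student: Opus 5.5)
To show that $bs_d$ fails $AK$, we will exhibit a single element $x\in bs_d$ whose sections $x^{[n]}$ do not converge to $x$ in $\|\cdot\|_{bs_d}$. The natural choice is $x=e_0=(1,0,0,\dots)$? No: $e_0\in c_{00}$ trivially has $AK$. Instead we take a sequence whose partial sums grow like the weight itself, namely
\[
x_0=d_0,\qquad x_k=d_k-d_{k-1}\ (k\ge 1),
\]
so that $\sum_{k=0}^{m}x_k=d_m$ for every $m$. Then $\|x\|_{bs_d}=\sup_m \frac{1}{d_m}|d_m|=1<\infty$, so $x\in bs_d$.

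Next we compute the tail $x-x^{[n]}$, whose entries vanish for $k\le n$ and equal $x_k$ for $k>n$. Its partial sums are $0$ for $m\le n$ and $d_m-d_n$ for $m>n$. Hence
\[
\|x-x^{[n]}\|_{bs_d}=\sup_{m>n}\frac{d_m-d_n}{d_m}=\sup_{m>n}\Bigl(1-\frac{d_n}{d_m}\Bigr).
\]
Here we use the standing assumption that $d$ is positive, increasing and unbounded. For fixed $n$, letting $m\to\infty$ gives $d_n/d_m\to 0$, so the supremum equals $1$ for every $n$. Therefore $\|x-x^{[n]}\|_{bs_d}=1\not\to 0$, and $x$ does not have $AK$ in $bs_d$.

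The only point requiring care is choosing $x$ so that its partial sums are comparable to $d_m$ while the tails do not become small. Unboundedness of $d$ is exactly what makes the tail norm stay at $1$. For a bounded weight the same tail computation gives $\sup_{m>n}(1-d_n/d_m)=1-d_n/\sup_j d_j\to 0$, so this $x$ would no longer be a counterexample. The computation itself is elementary. As a cross-check, one could instead take $x=e=(1,1,1,\dots)$ when $d_m\asymp m+1$, but the telescoping choice above works for a general weight $d$.
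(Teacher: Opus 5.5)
Your proof is correct, but your counterexample differs from the paper's. You take the telescoping sequence $x_0=d_0$, $x_k=d_k-d_{k-1}$, whose partial sums are exactly $d_m$. Equivalently, this is the preimage of the constant sequence $e$ under the isometry $\psi\colon bs_d\to\ell^\infty$, $\psi(x)_m=\frac{1}{d_m}\sum_{k\le m}x_k$, from Lemma~\ref{bsd-ns}. Your tail norm $\sup_{m>n}(1-d_n/d_m)$ reaches $1$ only in the limit $m\to\infty$, so you genuinely need $d$ to be unbounded.

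The paper instead uses the oscillating sequence $x=(0,d_1,-d_1,d_3,-d_3,\dots)$, whose partial sums alternate between $0$ and $d_{2k+1}$; this is the preimage of $(0,1,0,1,\dots)$ under $\psi$. It cuts only at even indices $2n$, where the partial sum of $x$ vanishes. The tail $x-x^{[2n]}$ then has partial sum exactly $d_{2n+1}$ at $m=2n+1$, so its norm is $1$ by direct evaluation, with no limiting argument and no use of unboundedness or monotonicity.

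The paper's choice therefore works for every positive weight; for example, it also shows that $bs$ itself lacks $AK$. It needs only a subsequence of sections. Your choice is the more natural element, the one with partial sums exactly equal to the weight, and it shows exactly where unboundedness of $d$ enters, as your closing remark notes. That is fine under the paper's standing assumptions.

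One small point: delete the false start about $e_0$ at the beginning, since it plays no role in the argument.
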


\begin{proof}
	We know that $d = (d_n)$ is a monotonic increasing, and unbounded sequence of positive real numbers. Consider the sequence $ x =(x_n)$ defined by 
	\[ x_{n} = 
	\begin{cases}
		0, &  \mathrm{if} \  n=0, \\
		d_{n}, & \mathrm{if} \ n=2k+1 \ \mathrm{for} \ k \in \mathbb{N}_{0}, \\
		-d_{n-1}, & \mathrm{if} \ n=2k \ \mathrm{for} \ k \in \mathbb{N}.
	\end{cases} \]
	Then, $ \| x  \|_{bs_{d}} = 1 $. So, $ x \in bs_{d}$. But, 
    \[ \big\|x-x^{[2n]}\big\|_{bs_{d}} = \big\| \big(  0, 0, \cdots, 0, d_{2n+1}, -d_{2n+1}, d_{2n+3}, \cdots \big)  \big\|_{bs_{d}} = 1 \not\to 0  \mbox{ as }  n \to \infty. \] 
    Therefore $bs_{d}$ does not possess the $AK$ property.
\end{proof}

\begin{lemma} \label{bsd-ns} 
	The space $ bs_{d} $ is a non-separable space.
\end{lemma}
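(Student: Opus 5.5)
The plan is the standard diagonal argument used to show that $\ell^\infty$ is non-separable: I will exhibit an uncountable family in $bs_d$ whose members are pairwise at distance bounded below by a fixed constant. Separability would then fail, because any dense set must meet each of the uncountably many disjoint open balls of radius $1/2$ (say) centred at the family members.

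To build the family, I use the building blocks from the proof of Lemma~\ref{bsd-nAK}. For $k\in\mathbb{N}_0$ let $u^{(k)}=d_{2k+1}e_{2k+1}-d_{2k+1}e_{2k+2}$. For each subset $S\subseteq\mathbb{N}_0$ (equivalently each $0$--$1$ sequence, so uncountably many) define
\[
x^{S}=\sum_{k\in S}u^{(k)}.
\]
This is a well-defined coordinatewise sum, since the supports are disjoint. The partial sums $\sum_{j=0}^{m}x^S_j$ equal either $0$ or $d_{2k+1}$, the latter exactly when $m=2k+1$ with $k\in S$. Since $d$ is increasing and positive, $\frac{1}{d_m}\bigl|\sum_{j\le m}x^S_j\bigr|\le 1$, so $x^S\in bs_d$ with $\|x^S\|_{bs_d}\le 1$.

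Now fix $S\neq S'$ and pick $k$ in the symmetric difference, say $k\in S\setminus S'$. The difference $x^S-x^{S'}$ is again a sum of blocks $\pm u^{(j)}$ on disjoint supports, so its partial sum up to index $m=2k+1$ is exactly $d_{2k+1}$: every earlier block contributes $0$ in total. Hence
\[
\|x^S-x^{S'}\|_{bs_d}\ge \frac{1}{d_{2k+1}}\,d_{2k+1}=1.
\]
The balls $B(x^S,1/2)$ are therefore pairwise disjoint. A countable dense subset would have to contain a point in each of them, which is impossible because $\{x^S : S\subseteq\mathbb{N}_0\}$ is uncountable.

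The only step that needs care is the partial-sum computation: each block sums to zero, so earlier blocks cancel, and the block at index $2k+1$ contributes exactly $d_{2k+1}$. This is routine, and I expect no real obstacle.
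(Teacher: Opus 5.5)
Your proof is correct, but it takes a different route from the paper. The paper establishes a stronger structural fact: the map $\psi(x)=\bigl(\frac{1}{d_m}\sum_{k=0}^m x_k\bigr)_m$ is a linear isometry of $bs_d$ onto $\ell^\infty$, with surjectivity coming from $x_n=d_ny_n-d_{n-1}y_{n-1}$. Non-separability then follows from the classical non-separability of $\ell^\infty$, a step the paper leaves implicit.

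You instead build an uncountable $1$-separated family directly inside $bs_d$. The two arguments are closely related. Under $\psi$, your $x^S$ is sent exactly to the indicator sequence of $\{2k+1 : k\in S\}$, so you are running the standard $\ell^\infty$ diagonal argument pulled back through $\psi$. You never need the full isometry or its surjectivity.

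What your version buys is a self-contained, explicit witness to non-separability. It also uses only $d_n>0$, not monotonicity: the nonzero partial sums occur only at $m=2k+1$, where the ratio is exactly $1$, so your appeal to $d$ being increasing is unnecessary. What the paper's version buys is the identification $bs_d\cong\ell^\infty$, which carries more information than non-separability alone, at the cost of relying on the known fact about $\ell^\infty$.
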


\begin{proof}
	We will prove the lemma by showing the space $bs_{d}$ is norm isomorphic to the space $\ell^{\infty}$. Consider the map $\psi: bs_{d} \to \ell^{\infty}$ defined by $y = \psi(x) = \Big(\frac{1}{d_{m}} \big(\sum_{k=0}^{m} x_{k}\big)\Big) $. Then linearity of $\psi$ is obvious. Also, it is easy to prove that $\psi$ is injective. For surjectivity, let $y = \big(y_{0}, y_{1}, y_{2}, \cdots \big) \in \ell^{\infty} $. Now define the sequence $ x = (x_{n}) $ by $x_{n}= d_{n} y_{n} - d_{n-1}y_{n-1} $ for all $n \in \mathbb{N}_{0} $ with $ d_{-1}y_{-1} = 0$. Then, we have 
	\begin{align*}
		\| x \|_{bs_{d}} 
		&= \underset{m}{\sup} \ \frac{1}{d_{m}} \ \Biggl|\sum_{k=0}^{m}x_{k}\Biggr| \\
		&= \underset{m}{\sup} \ \frac{1}{d_{m}} \ \Biggl|\sum_{k=0}^{m}\Big(d_{k} y_{k} - d_{k-1}y_{k-1}\Big)\Biggr| \\
		&= \underset{m}{\sup} \ \frac{1}{d_{m}} \ \bigl|d_{m}y_{m}\bigr| \\
		&= 	\underset{m}{\sup} \ | y_{m} | \\
		&= \| y \|_{\ell^{\infty}}.
	\end{align*}
	Thus, $ \psi $ is surjective and norm preserving, i.e., $ \psi $ is a linear bijection which preserves the norm.
\end{proof}

\begin{proposition}\label{B-beta-S-A-beta}
    Let $A \subseteq B$ be two subsets of $\mathbb{C}^{\mathbb{N}_{0}}$. Then $B^{\beta} \subseteq A^{\beta}$.
\end{proposition}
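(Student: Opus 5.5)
This follows directly from the definition of the $\beta$-dual. The plan is a one-line element-chasing argument, and no earlier result of the paper is needed.

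Fix an arbitrary $x \in B^{\beta}$. By definition of the $\beta$-dual,
\[
xy = (x_n y_n) \in cs \quad \text{for every } y \in B.
\]
I want to show that $x \in A^{\beta}$, i.e. that $xy \in cs$ for every $y \in A$. Take any $y \in A$. The hypothesis $A \subseteq B$ gives $y \in B$, so the previous display applies and yields $(x_n y_n) \in cs$. Since $y \in A$ was arbitrary, $x \in A^{\beta}$. Since $x \in B^{\beta}$ was arbitrary, $B^{\beta} \subseteq A^{\beta}$.

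There is no real obstacle; the only point to note is that the argument uses nothing about $cs$ beyond its being a fixed target set. The same reasoning therefore gives the analogous inclusions $B^{\alpha} \subseteq A^{\alpha}$ and $B^{\gamma} \subseteq A^{\gamma}$, which one may remark on for later use (for instance, to compare the duals of $h_d$ with those of $c_{00}$ or $c_0$).
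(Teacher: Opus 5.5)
Your argument is correct and is essentially the paper's proof: fix an element of $B^{\beta}$, and use $A \subseteq B$ to see that the condition defining $A^{\beta}$ is a special case of the one defining $B^{\beta}$. Your side remark that the same reasoning gives $B^{\alpha} \subseteq A^{\alpha}$ and $B^{\gamma} \subseteq A^{\gamma}$ is also valid.
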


\begin{proof}
    Let $b = (b_{n}) \in B^{\beta}$. Then for all $x = (x_{n}) \in B, \ bx = (b_{n}x_{n}) \in cs$. As $A \subseteq B$, it follows that $bx \in cs$ for all $x \in A$. Therefore $b \in A^{\beta}$.
\end{proof}

\begin{theorem} \label{beta-dual-bsd}
	The $\beta$-dual of $bs_{d}$ is $h_{d}$, i.e., $ h_{d} = bs_{d}^{\beta} $.
\end{theorem}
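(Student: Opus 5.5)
We prove the two inclusions $h_d \subseteq bs_d^{\beta}$ and $bs_d^{\beta} \subseteq h_d$ separately. Both rely on Abel's partial summation. For $x \in bs_d$ write $S_m = \sum_{k=0}^{m} x_k$, so that $|S_m| \le \|x\|_{bs_d}\, d_m$. Then for any $y$,
\[
\sum_{k=0}^{m} x_k y_k = \sum_{k=0}^{m-1} S_k (y_k - y_{k+1}) + S_m y_m .
\]

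\emph{Inclusion $h_d \subseteq bs_d^{\beta}$.} Let $y \in h_d$ and $x \in bs_d$. The series $\sum_k S_k(y_k-y_{k+1})$ converges absolutely, because
\[
\sum_k |S_k|\,|y_k-y_{k+1}| \le \|x\|_{bs_d}\sum_k d_k|y_k-y_{k+1}| = \|x\|_{bs_d}\|y\|_{h_d}.
\]
For the boundary term we have $|S_m y_m| \le \|x\|_{bs_d}\, d_m|y_m|$. This tends to $0$ by the preceding lemma $h_d \subseteq c_0(d)$. Hence $xy \in cs$, and so $y \in bs_d^{\beta}$.

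\emph{Inclusion $bs_d^{\beta} \subseteq h_d$.} Let $y \in bs_d^{\beta}$. We must show that $y_n \to 0$ and that $\sum_n d_n|y_n-y_{n+1}|<\infty$. For the null condition, note that $e_0 d_0 \in bs_d$: its partial sums equal $d_0 \le d_m$ (using that $d$ is increasing and positive). This alone gives nothing about $y$ tending to zero, so we instead use the isometry $\psi : bs_d \to \ell^\infty$ of Lemma \ref{bsd-ns}. Every $x \in bs_d$ has the form $x_k = d_k z_k - d_{k-1}z_{k-1}$ with $z = \psi(x)\in\ell^\infty$ arbitrary, so $S_m = d_m z_m$. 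The partial summation identity then becomes
\[
\sum_{k=0}^{m} x_k y_k = \sum_{k=0}^{m-1} d_k z_k (y_k-y_{k+1}) + d_m z_m y_m .
\]
Thus $y \in bs_d^{\beta}$ is equivalent to the statement that the matrix $A$ with rows $a_{mk} = d_k(y_k-y_{k+1})$ for $k<m$, $a_{mm} = d_m y_m$, and $a_{mk}=0$ for $k>m$ maps $\ell^\infty$ into $c$. Write $\Delta y_k = y_k - y_{k+1}$. By the classical characterization of $(\ell^\infty : c)$ (Schur's theorem), this holds if and only if two conditions are met:
\begin{itemize}
\item[(a)] $\sup_m \sum_k |a_{mk}| < \infty$;
\item[(b)] $\sum_k |a_{mk}|$ converges uniformly in $m$, together with columnwise convergence. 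Equivalently, $\sum_k |a_{mk} - \alpha_k| \to 0$, where $\alpha_k = \lim_m a_{mk} = d_k\Delta y_k$.
\end{itemize}
Condition (a) gives $\sum_k d_k|\Delta y_k| < \infty$, since $\sum_{k<m} d_k|\Delta y_k| \le \sup_m\sum_k|a_{mk}|$. Condition (b) gives $\sum_{k\ge m} d_k|\Delta y_k| + d_m|y_m| \to 0$, so in particular $d_m|y_m|\to 0$. Since $d_m \ge d_0>0$, this yields $y_m \to 0$. Hence $y \in h_d$.

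To avoid citing Schur's theorem for (b), we could argue directly. Boundedness (a) follows from Banach--Steinhaus: the functionals $z \mapsto \sum_{k\le m} a_{mk}z_k$ on $\ell^\infty$ converge pointwise, so their norms $\sum_k|a_{mk}|$ are bounded. To get $y_m\to 0$ directly, suppose not. Then we can choose a subsequence $m_j$ with $d_{m_j}|y_{m_j}|\ge\delta$ and build a $z$ with $|z|\le 1$ whose signs alternate along blocks, making $d_{m}z_m y_m$ oscillate while the absolutely convergent tail stays controlled. This gliding hump construction is the main obstacle. Invoking the $(\ell^\infty:c)$ characterization is cleaner, and that is the route I would take.
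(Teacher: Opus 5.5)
Your proof is correct. The first inclusion is essentially the paper's argument: Abel summation, absolute convergence from $y\in h_d$, and the boundary term killed by $h_d\subseteq c_0(d)$.

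For $bs_d^{\beta}\subseteq h_d$ you take a genuinely different route, and it is the sounder one. The paper passes to the subspace of those $x\in bs_d$ whose partial sums are bounded, and uses that $bs_d^{\beta}$ is contained in the $\beta$-dual of this subspace. Since $d_m\ge d_0>0$, that subspace is just $bs$, and $bs^{\beta}=bv_0\supsetneq h_d$. For instance, when $d_n=n+1$, the sequence $y_n=1/(n+1)$ lies in $bv_0$ but not in $h$. So the reduction throws away the weight. The paper's claim that $(z_n/d_n)$ runs through all of $c_0$ is exactly where this shows: it only runs through those $w$ with $(d_nw_n)$ bounded. That yields $\Delta y\in\ell^1$ rather than $(d_n\Delta y_n)\in\ell^1$. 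Your parametrization of all of $bs_d$ through $\psi$, with $S_m=d_mz_m$ for arbitrary $z\in\ell^\infty$, keeps the weight inside the test functionals, which is precisely what this inclusion needs.

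You can also drop Schur's condition (b) and the gliding-hump worry entirely. The Banach--Steinhaus step you already describe gives $\sup_m\bigl(\sum_{k<m}d_k|\Delta y_k|+d_m|y_m|\bigr)=C<\infty$. Hence $\sum_k d_k|\Delta y_k|\le C$, and $|y_m|\le C/d_m\to 0$ because $d$ is unbounded. Alternatively, the paper's test vector $(1,-1,1,-1,\dots)\in bs_d$ gives $y\in c_0$ at once.

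One minor slip: with $\alpha_k=d_k\Delta y_k$ one gets $\sum_k|a_{mk}-\alpha_k|=d_m|y_{m+1}|+\sum_{k>m}d_k|\Delta y_k|$, not the expression you wrote. Your conclusion $d_m|y_m|\to 0$ is still right.
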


\begin{proof}
	Let us consider $ x= (x_{n}), y= (y_{n}) \in   \mathbb{C}^{\mathbb{N}_{0}} $ such that $ x \in h_{d} $ and $ y \in bs_{d} $. We will show that $ x y= (x_{n}y_{n}) \in cs $.\\
	Let $a_{n} = \frac{1}{d_{n}} \sum_{k=0}^{n} y_{k}$. Then $ a= (a_{n}) \in \ell^{\infty}$. We also note that, $ y_{n} = d_{n} a_{n} - d_{n-1} a_{n-1}$ for all $n \in \mathbb{N}$ with $y_{0} = d_{0} a_{0}$. Now, as $x \in h_{d} \subseteq c_{0}(d) $ and $ a \in \ell^{\infty} ,\ d_{n}a_{n}x_{n+1} \to 0\ \text{as}\ n \to \infty$. As $x \in h_{d}$, for $\epsilon >0$ there exists a natural number $N$ such that $\sum_{k=N}^{\infty} d_{k} \big|x_{k}-x_{k+1}\big|< \frac{\epsilon}{2 \|a\|_{\ell^\infty}}$. Without loss of generality also let us assume that $d_{k}\big|a_{k}x_{k+1}\big|< \frac{\epsilon}{2}$ for all $k \geq N$. Now
	\begin{align*}
		\Big|\sum_{k=N+1}^{n} x_{k} y_{k} \Big| 
		&= \Big| \sum_{k=N+1}^{n} x_{k} \big(d_{k} a_{k} - d_{k-1} a_{k-1} \big) \Big| \\ 
		&= \Big| \sum_{k=N+1}^{n} d_{k}a_{k} \big(x_{k} - x_{k+1}\big) + \sum_{k=N+1}^{n} d_{k} a_{k} x_{k+1} - \sum_{k=N+1}^{n}d_{k-1} a_{k-1} x_{k}\Big| \\
		&= \Big| \sum_{k=N+1}^{n} d_{k}a_{k} \big(x_{k} - x_{k+1}\big) \ + \ d_{n}a_{n}x_{n+1} - d_{N}a_{N}x_{N+1} \Big| \\
		& \leq \sum_{k=N+1}^{n} d_{k}\big|a_{k}\big|\big| x_{k} - x_{k+1} \big| + \big| d_{n}a_{n}x_{n+1} \big| + \big| d_{N}a_{N}x_{N+1} \big| \\
		& \leq \big\| a \big\|_{\ell^\infty}\sum_{k=N+1}^{n} d_{k}\big| x_{k} - x_{k+1} \big| + \big| d_{n}a_{n}x_{n+1} \big| + \big| d_{N}a_{N}x_{N+1} \big|.
	\end{align*} 
	Letting $n \to \infty$ we have 
	\begin{align*}
		\Big| \sum_{k=N+1}^{\infty} x_{k} y_{k} \Big| 
		&\leq \big\| a \big\|_{\ell^\infty}\sum_{k=N+1}^{\infty} d_{k}\big| x_{k} - x_{k+1} \big|  + \big| d_{N}a_{N}x_{N+1} \big| \\
		& < \big\| a \big\|_{\ell^\infty} \frac{\epsilon}{2 \| a \|_{\ell^\infty}} + \frac{\epsilon}{2} = \epsilon.
	\end{align*}
	So $ xy \in cs$, which implies $h_{d} \subseteq bs_{d}^\beta$.

    For the converse part, let $S = \Big\{ x \in bs_{d} : z=(z_{n}) = \big( \sum_{k=0}^n x_{k} \big) \in \ell^{\infty} \Big\}$. Then $S \subseteq bs_{d}$. Let $y \in S^{\beta}$. First we will show that $y \in c_{0}$. If possible let $y \notin c_{0}$. Then for $x = (1, -1,1,-1, \cdots) \in S, \ xy= \big(x_{n} y_{n} \big)\notin c_{0} \subseteq cs$, which is a contradiction. So, $y \in c_{0}$. Now for $x \in S \ \text{and} \ y \in S^{\beta}$ we have
    \begin{align*}
		&\sum_{k=0}^{n}x_{k}y_{k} 
		= \sum_{k=0}^{n}(y_{k}- y_{k+1})\sum_{i=0}^{k}x_{i} \ + \ y_{n+1}\sum_{k=0}^{n}x_{k}\\
		\implies & \sum_{k=0}^{n}(y_{k}- y_{k+1})\sum_{i=0}^{k}x_{i} = \sum_{k=0}^{n}x_{k}y_{k} \ - \ y_{n+1}\sum_{k=0}^{n}x_{k} \\
		\implies & \sum_{k=0}^{n}d_{k}(y_{k}- y_{k+1})\frac{1}{d_{k}}\sum_{i=0}^{k}x_{i} =\sum_{k=0}^{n}x_{k}y_{k} \ - \ y_{n+1}\sum_{k=0}^{n}x_{k}.
	\end{align*}
	Since $(x_{k}y_{k}) \in cs$ and $\big(y_{n+1}\sum_{k=0}^{n}x_{k}\big) \in c_{0}$, we have $\Big(d_{k}(y_{k}- y_{k+1})\frac{1}{d_{k}}z_{k}\Big) \in cs$. Now as $x$ varies over all the members of $bs_{d}$ with $z= \big(\sum_{k=0}^{n}x_{k}\big) \in \ell^\infty$, $\Big(\frac{z_{n}}{d_{n}}\Big)$ varies over all the members of $c_{0}$, i.e., $\big(d_{k}(y_{k}- y_{k+1})\big) \in c_{0}^\beta= \ell^1$, which implies, $y= (y_{n}) \in h_{d}$, i.e., $ S^{\beta} \subseteq h_{d}$. As $S \subseteq bs_{d} $, by Proposition \ref{B-beta-S-A-beta}, $bs_{d}^{\beta} \subseteq S^{\beta}$. Therefore, our result follows. 
\end{proof}	

The following result is immediate from Theorem \ref{beta-dual-bsd}.
\begin{corollary}
    The following assertion holds:
    \[h_d^{\beta \beta} = (bs_d)^\beta =h_d.\]
\end{corollary}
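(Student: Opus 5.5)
The corollary asserts two equalities: $(bs_d)^\beta = h_d$ and $h_d^{\beta\beta} = h_d$. The first is exactly Theorem \ref{beta-dual-bsd}, so it remains to show $h_d^{\beta} = bs_d$. Once that is known, $h_d^{\beta\beta} = (h_d^\beta)^\beta = (bs_d)^\beta = h_d$ follows by substitution. Note that $h_d^*\cong h_d^\beta$ (Lemma \ref{b-c-dual}, since $h_d$ is a $BK$ space with $AK$) and $h_d^*\cong bs_d$ (the cited isomorphism of Malkowsky et al.). This strongly suggests $h_d^\beta = bs_d$, but an abstract norm isomorphism is not an equality of sets, so I will verify the set equality directly.

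\emph{Inclusion $bs_d \subseteq h_d^\beta$.} This is precisely the first half of the proof of Theorem \ref{beta-dual-bsd}, read with the roles of $x$ and $y$ interchanged. There it was shown that for every $x \in h_d$ and $y \in bs_d$ the product $xy \in cs$. That statement simultaneously says $h_d \subseteq bs_d^\beta$ and $bs_d \subseteq h_d^\beta$.

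\emph{Inclusion $h_d^\beta \subseteq bs_d$.} Let $y \in h_d^\beta$. The plan has three steps.
\begin{enumerate}
\item Use Abel summation in the form already written in the proof of Theorem \ref{beta-dual-bsd}. For $x \in h_d$ set $Y_k=\sum_{i=0}^k y_i$. Then
\[
\sum_{k=0}^n x_k y_k=\sum_{k=0}^{n}(x_k-x_{k+1})Y_k + x_{n+1}Y_n .
\]
\item Test with $x = e^{[m]}=\sum_{k=0}^m e_k\in h_d$. Here $\|e^{[m]}\|_{h_d}=d_m$, and $\sum_k e^{[m]}_k y_k = Y_m$.
\item Consider the functionals $f_m(x)=\sum_{k=0}^m x_ky_k$ on $h_d$. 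Each $f_m$ is continuous because $h_d$ is a $BK$ space. By hypothesis $f_m(x)$ converges for every $x$, so the Banach--Steinhaus theorem makes the limit functional $f(x)=\sum_k x_ky_k$ bounded on $h_d$. Then
\[
|Y_m| = |f(e^{[m]})| \le \|f\|\,d_m ,
\]
so $\sup_m |Y_m|/d_m<\infty$, that is, $y\in bs_d$.
\end{enumerate}

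The main obstacle is justifying that $f$ is bounded. Banach--Steinhaus applies since $h_d$ is complete and the $f_m$ are continuous, and the proposal relies on exactly this. If a more elementary route is preferred, one can instead argue by contradiction. Assuming $|Y_{m_j}|/d_{m_j}$ grows fast, one builds the gliding-hump element $x=\sum_j c_j d_{m_j}^{-1}e^{[m_j]}$, with $c_j\to0$ chosen so that $\sum_j |c_j|<\infty$ (which puts $x$ in $h_d$) while the partial sums of $xy$ fail to converge. The functional-analytic route is cleaner, so I will follow it.
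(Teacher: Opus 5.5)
Your proof is correct. Its skeleton matches the paper's. The paper calls the corollary immediate from Theorem~\ref{beta-dual-bsd} and tacitly identifies $h_d^{\beta}$ with $bs_d$ through the cited Proposition~2.3 of Malkowsky et al. The paper itself phrases that result only as norm isomorphisms $h_d^{*}\cong h_d^{\beta}$ and $h_d^{*}\cong bs_d$.

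The difference is that you prove $h_d^{\beta}=bs_d$ rather than import it, and you are right that an isomorphism alone does not give set equality. You get $bs_d\subseteq h_d^{\beta}$ from the symmetry of the first half of the proof of Theorem~\ref{beta-dual-bsd}. You get $h_d^{\beta}\subseteq bs_d$ from Banach--Steinhaus applied to the partial-sum functionals, tested on $e^{[m]}$ with $\|e^{[m]}\|_{h_d}=d_m$. This makes the corollary self-contained. The same summation by parts, run from $k=0$, gives $|\sum_k x_ky_k|\le\|y\|_{bs_d}\|x\|_{h_d}$, so your bound upgrades to $\|f\|=\|y\|_{bs_d}$. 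That recovers the isometric identification the paper only quotes. Your step~1 is never used and can be dropped.

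One caution concerns the ingredient that both you and the paper take as given, namely $(bs_d)^{\beta}\subseteq h_d$. This does not create a gap in your deduction, since the statement of Theorem~\ref{beta-dual-bsd} is true. However, the paper's proof of this inclusion does not work:
\begin{itemize}
\item It asserts that $(z_n/d_n)$ runs through all of $c_0$ as $z$ runs through $\ell^\infty$. In fact it only runs through $\{w: (d_nw_n)\in\ell^\infty\}$.
\item Its intermediate claim $S^{\beta}\subseteq h_d$ is false. Here $S=bs$ and $S^{\beta}=bv_0$, and for $d_n=n+1$ the sequence $y_n=1/(n+1)$ lies in $bv_0$ but not in $h$.
\end{itemize}
Your own method proves the inclusion. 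For $y\in bs_d^{\beta}$, the continuous functionals $g_m(x)=\sum_{k\le m}x_ky_k$ converge pointwise on the Banach space $bs_d$, so they are uniformly bounded. Write $x_k=d_ka_k-d_{k-1}a_{k-1}$ with $a\in\ell^\infty$, using the isometry of Lemma~\ref{bsd-ns}. This gives $\|g_m\|=\sum_{k<m}d_k|y_k-y_{k+1}|+d_m|y_m|$. Hence $\sum_k d_k|y_k-y_{k+1}|<\infty$ and $(d_m|y_m|)$ is bounded, so $y_m\to 0$ because $d_m\to\infty$, and therefore $y\in h_d$.
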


\section{Boundedness and compactness of $T_a$} \label{sect4}

In this section, we address the boundedness and compactness of the convolution operator $T_{a}$ defined on weighted Hahn sequence spaces. In addition, we characterize the multipliers $\mathscr{M}(h_d)$ and $\mathscr{M}(bs_d).$ For this purpose, we use the concept of determining set of a $BK$ space.
\begin{definition} \cite[Definition 7.4.2]{wilansky2000summability} 
	Let $X$ be a $BK$ space. A subset $E$ of the set  $c_{00}$ is called a determining \ set for $X$ if $ D(X)= \bigl\{ x \in X : x \in c_{00} \ \mathrm{and} \ \|x\|_{X} \leq 1 \bigr\} $ is the absolutely convex hull of $E$. 
\end{definition}

\begin{lemma} \cite[Theorem 8.3.4]{wilansky2000summability} \label{BddXY}
	Let $X$ be a $BK$ space with $AK$, $E$ be a determining set for $X$, and Y be an $FK$ space. Then $A \in (X:Y)$ if and only if:
	\begin{itemize}
		\item[(i)] \label{BddC1}The columns of $A$ belong to $Y$, that is $A^{(k)}= (a_{nk})_{n=0}^\infty  \in Y$ for all $k \in \mathbb{N}_{0}$.
		\item[(ii)] \label{BddC2} $L(E)$ is a bounded subset of $Y$, where $L(x) = Ax$ for all $x \in X$.
	\end{itemize}  
\end{lemma}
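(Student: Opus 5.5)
The plan is to prove the two directions separately. Throughout, I use that in Wilansky's framework an $FK$ space is a locally convex Fréchet space, so its topology is given by an increasing sequence of seminorms $(q_j)$. Write $\|\cdot\|_X$ for the norm of $X$ and $A^{(k)}=(a_{nk})_{n}$ for the $k$-th column.

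\emph{Necessity.} Suppose $A \in (X:Y)$ and let $L(x)=Ax$.
\begin{itemize}
\item[(a)] Since $X \supset c_{00}$, we have $e_k \in X$, and $Ae_k = A^{(k)}$. Hence every column lies in $Y$, which is condition (i).
\item[(b)] Next I show that $L$ is continuous. Each coordinate functional $x \mapsto (Ax)_n=\sum_k a_{nk}x_k$ is a pointwise limit of continuous functionals on the $BK$ space $X$, so it is continuous by Banach--Steinhaus. Therefore, if $x^{(i)}\to x$ in $X$ and $Ax^{(i)}\to y$ in $Y$, then $y_n=(Ax)_n$ for every $n$, because $Y$ is a $K$-space. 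Thus $L$ has closed graph, and the closed graph theorem for Fréchet spaces gives continuity.
\item[(c)] By definition of a determining set, $E \subseteq D(X)$, which lies in the closed unit ball of $X$. The continuous map $L$ sends this ball to a bounded subset of $Y$, so $L(E)$ is bounded, which is condition (ii).
\end{itemize}

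\emph{Sufficiency.} Assume (i) and (ii).
\begin{itemize}
\item[(a)] By (i), the formula $L(x)=\sum_k x_k A^{(k)}$ (a finite sum) defines a linear map $L:c_{00}\to Y$ with $L(x)=Ax$ on $c_{00}$.
\item[(b)] By (ii), $M_j:=\sup_{u\in E} q_j(Lu)<\infty$ for each $j$. If $x=\sum_i \lambda_i u_i$ is an absolutely convex combination of elements of $E$ (so $\sum_i|\lambda_i|\le 1$), then linearity and the triangle inequality give $q_j(Lx)\le M_j$. Since $D(X)$ is exactly the absolutely convex hull of $E$, this yields
\[
q_j(Lx)\le M_j\|x\|_X \quad \text{for all } x\in c_{00},
\]
using homogeneity. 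Hence $L$ is continuous on $(c_{00},\|\cdot\|_X)$.
\item[(c)] Because $X$ has $AK$, $c_{00}$ is dense in $X$, and $Y$ is complete. So $L$ extends uniquely to a continuous linear map $\tilde L: X\to Y$.
\end{itemize}

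The remaining, and main, step is to check that $\tilde L$ is really the matrix map.
\begin{itemize}
\item[(a)] Fix $x\in X$. By $AK$, $x^{[m]}\to x$ in $X$, so $\tilde L x = \lim_m L x^{[m]}$ in $Y$.
\item[(b)] The $n$-th coordinate of $L x^{[m]}$ is $\sum_{k=0}^m a_{nk}x_k$.
\item[(c)] Since $Y$ is a $K$-space, coordinates are continuous. Hence $\sum_{k=0}^m a_{nk}x_k \to (\tilde Lx)_n$ as $m\to\infty$.
\item[(d)] This shows that the series $(Ax)_n$ converges for every $n$ and that $Ax=\tilde L x\in Y$. Therefore $A\in(X:Y)$.
\end{itemize}
The only delicate points are the passage from boundedness on $E$ to boundedness on its absolutely convex hull, which relies on local convexity through the seminorms, and the identification of the extension with $A$ via the $K$-property. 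Neither should cause real difficulty.
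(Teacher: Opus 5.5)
Your proof is correct and complete. The paper gives no proof of its own here; the lemma is simply quoted from Wilansky. Your argument is the standard proof behind that citation:
\begin{itemize}
\item[(a)] For necessity, you obtain continuity of the matrix map from Banach--Steinhaus and the closed graph theorem.
\item[(b)] For sufficiency, you bound the seminorms by $q_j(Lx)\le M_j\|x\|_X$ on the absolutely convex hull of $E$, extend $L$ continuously from the dense subspace $c_{00}$, and identify the extension with $A$ through the $K$-property.
\end{itemize}

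You already flag local convexity of $Y$, and it is genuinely needed. The paper's own definition of an $FK$ space (a complete linear metric $K$-space) omits it. Under that weaker definition the sufficiency direction fails. Take $X=\ell^1$, the determining set $E=\{e_k : k\in\mathbb{N}_0\}$, $A=I$ and $Y=\ell^{1/2}$. The columns lie in $Y$ and $L(E)$ is bounded in $\ell^{1/2}$, yet $\ell^1\not\subseteq\ell^{1/2}$.

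So Wilansky's locally convex definition, which you adopted, is the one that must be in force. In the paper's application $Y=h_s$ is a Banach space, so nothing is lost.
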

In \cite[Proposition 3.2]{malkowsky2021compact}, it is proved that $E= \{s(d,k) : k \in \mathbb{N}_{0}\}$ is a determining set for $h_{d}$ where $s(d,k) = \frac{1}{d_{k}} e^{[k]}$, where $ e^{[k]}$ denote the $n$-th section of the vector $e$, the vector whose each component is 1. 

\begin{theorem} \label{bdd-hrhs}
	Let $r=(r_{n})$ and $s=(s_{n})$ be two monotonic increasing, and unbounded sequences of positive real numbers. Then $T_{a}: h_{r} \to h_{s}$ is bounded if and only if 
	\begin{equation*}
		\underset{n \to \infty}{\lim}a_{n} = 0, 
	\end{equation*} 
	and 
	\begin{equation} \label{bounded_h2}
		\|T_{a} \|_{\mathcal{B}(h_{r},h_{s})} =  \underset{m}{\sup} \  \frac{1}{r_{m}}\sum_{n=0}^{\infty}s_{n}\big| a_{n-m} - a_{n+1} \big| < \infty.
	\end{equation}
\end{theorem}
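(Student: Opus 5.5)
Apply Lemma \ref{BddXY} with $X=h_{r}$, $Y=h_{s}$, $A=[T_a]$ (so $a_{nk}=a_{n-k}$ for $n\ge k$ and $0$ otherwise), and the determining set $E=\{s(r,m): m\in\mathbb{N}_0\}$ of \cite[Proposition 3.2]{malkowsky2021compact}, where $s(r,m)=\frac{1}{r_m}e^{[m]}$. The space $h_{r}$ is a $BK$ space with $AK$ and $h_{s}$ is an $FK$ space, so $T_a\in(h_r:h_s)$, which by the closed graph theorem for $BK$ spaces is the same as boundedness, if and only if (i) every column of $[T_a]$ lies in $h_s$ and (ii) $\{T_a s(r,m)\}_m$ is bounded in $h_s$.

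\emph{Step 1: the image of the determining set.} Compute $y^{(m)}:=T_a e^{[m]}$. Its $n$-th entry is $y^{(m)}_n=\sum_{k=0}^{\min(n,m)}a_{n-k}$. Hence, for $n\ge m$,
\[
y^{(m)}_n-y^{(m)}_{n+1}=a_{n-m}-a_{n+1},
\]
and for $n<m$ the difference is $-a_{n+1}$, which agrees with $a_{n-m}-a_{n+1}$ under the convention $a_j=0$ for $j<0$. Therefore
\[
\|T_a s(r,m)\|_{h_s}=\frac{1}{r_m}\sum_{n=0}^{\infty}s_n\,|a_{n-m}-a_{n+1}|,
\]
so condition (ii) is exactly the finiteness of the supremum in \eqref{bounded_h2}. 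Using the operator-norm formula of \cite[Theorem 3.9]{malkowsky2021compact}, adapted to weights $r,s$ (equivalently, the fact that the unit ball of $c_{00}\cap h_r$ is the absolutely convex hull of $E$ and that $c_{00}$ is dense by $AK$), this supremum equals $\|T_a\|_{\mathcal{B}(h_r,h_s)}$.

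\emph{Step 2: the columns.} The $k$-th column is the shift $(0,\dots,0,a_0,a_1,\dots)$. It lies in $h_s$ if and only if $a_n\to0$ and $\sum_n s_{n+k}|a_n-a_{n+1}|<\infty$. Taking $m=0$ in \eqref{bounded_h2} gives
\[
\sum_n s_n|a_{n}-a_{n+1}|\le r_0\,\|T_a\|<\infty,
\]
which settles $k=0$. For general $k$, the $n$-th and $(n+1)$-th entries of column $k$ are $y^{(k)}_n-y^{(k-1)}_n$ and $y^{(k)}_{n+1}-y^{(k-1)}_{n+1}$, so column $k$ equals $T_a e^{[k]}-T_a e^{[k-1]}$. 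Its weighted variation is therefore bounded by $(r_k+r_{k-1})\sup_m\|T_a s(r,m)\|_{h_s}$. Combined with $a_n\to0$, this puts every column in $h_s$. Conversely, if $T_a$ is bounded then the column $T_a e_0=a$ lies in $h_s\subseteq c_0$, which gives $a_n\to 0$.

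\emph{Main obstacle.} The only delicate point is the $c_0$ requirement built into $h_s$. Finiteness of the weighted variation alone only forces $a_n\to L$ for some constant $L$, which is why $\lim a_n=0$ has to be stated as a separate hypothesis. Sufficiency then needs both conditions together to put the columns in $h_s$. It also needs care with the identification of the supremum as the exact operator norm; for this I will invoke the determining-set argument, i.e.\ the norm is the supremum over the extreme points $s(r,m)$ of $D(h_r)$.
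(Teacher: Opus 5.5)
Your proposal is correct and follows essentially the same route as the paper. Both arguments apply Lemma~\ref{BddXY} with the determining set $\{r_m^{-1}e^{[m]}\}$, use the same telescoping computation of $\|T_a(r_m^{-1}e^{[m]})\|_{h_s}$, and bound the $k$-th column by $(r_k+r_{k-1})\sup_m\|T_a(r_m^{-1}e^{[m]})\|_{h_s}$. You also fill in two points the paper only asserts: you show the supremum is exactly the operator norm (via the absolutely convex hull and $AK$ density), and you derive $a_n\to0$ from $T_a e_0=a\in h_s$.
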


\begin{proof}
Let $E = \{y^{(m)}= \frac{1}{r_{m}}e^{[m]} : m \in \mathbb{N}_0\}$ be the determining set for $h_r,$ where $e^{[m]}$ denote the $m$-th section of the vector $e$. From Lemma \ref{BddXY} we have $T_a \in \mathcal{B}(h_r, h_s)$ if and only if
\begin{enumerate}
    \item[(a)] $\underset{m}{\sup} \|T_{a}\big(y^{(m)}\big)\|_{h_{s}} < \infty, \ \text{with} \ T_{a}\big(y^{(m)}\big) \in c_{0} \ \text{for all}\ y^{(m)} \in E,$
    \item[(b)] columns of $[T_a]$ belongs to $h_s.$ 
\end{enumerate}
Now,

\begin{align*}
		\|T_{a}\big(y^{(m)}\big)\|_{h_{s}} 
		&= \sum_{n=0}^{\infty}s_{n}\big|[T_{a}]_{n,*}y^{(m)} - [T_{a}]_{n+1,*}y^{(m)}\big| \\
		&= \sum_{n=0}^{\infty} s_{n} \Bigg|\frac{\sum_{k=0}^{m}\big(a_{n-k} - a_{n+1-k}\big)}{r_{m}}\Bigg| \\
		&= \frac{1}{r_{m}} \sum_{n=0}^{\infty}s_{n} \Big|\sum_{k=0}^{m}\big(a_{n-k} - a_{n+1-k}\big)\Big| \\
		&= \frac{1}{r_{m}} \sum_{n=0}^{\infty}s_{n} \big|a_{n-m} - a_{n+1} \big|.
	\end{align*}
    Also, the $k$-th column $[T_{a}]_{*,k}$ of $[T_a]$ is the $k$ times shift (right) of the zeroth column, where $k \in \mathbb{N}_0$. Hence, it is easy to prove that $T_{a}\big(y^{(m)}\big) \in c_0$ if and only if $a_n \to 0 $ as $n \to \infty.$ Therefore
	\begin{eqnarray*}
	  & & \underset{m}{\sup} \|T_{a}\big(y^{(m)}\big)\|_{h_{s}} < \infty, \ \text{with} \ T_{a}\big(y^{(m)}\big) \in c_{0} \ \text{for all}\ y^{(m)} \in E \\
       & & \iff  \underset{m}{\sup} \frac{1}{r_{m}} \sum_{n=0}^{\infty}s_{n} \big|a_{n-m} - a_{n+1} \big| < \infty \ \text{with} \lim_{n \to \infty} a_n = 0.\   
	\end{eqnarray*}
It remains to show that condition (b) automatically follows from the hypothesis \eqref{bounded_h2}. Indeed,
	\begin{align*}
		\| [T_{a}]_{*,k}\|_{h_{s}} &= \sum_{n=0}^{\infty} s_{n}\big|a_{n-k} - a_{n+1-k}\big| \\
		&=  \sum_{n=0}^{\infty} s_{n}\Big|\sum_{j=0}^{k} \big(a_{n-j} - a_{n+1-j}\big) - \sum_{j=0}^{k-1} \big(a_{n-j} - a_{n+1-j}\big)\Big| \\
		&\leq r_{k} \sum_{n=0}^{\infty} s_{n}\Big|[T_{a}]_{n,*}y^{(k)} - [T_{a}]_{n+1,*}y^{(k)} \Big| + r_{k-1} \sum_{n=0}^{\infty} s_{n}\Big|[T_{a}]_{n,*}y^{(k-1)} - [T_{a}]_{n+1,*}y^{(k-1)}\Big| \\
		&= r_{k}\|T_{a}\big(y^{(k)}\big)\|_{h_{s}} +r_{k-1}\|T_{a}\big(y^{(k-1)}\big)\|_{h_{s}} < \infty.
	\end{align*}
	This proves the result.
\end{proof}
In particular, if $r= s =d$ (say), then we have the following result.
\begin{corollary} \label{bdd-hd}
	The convolution operator $T_{a}:h_{d} \to h_{d}$ is bounded if and only if 
	\begin{equation}
		\underset{n \to \infty}{\lim}a_{n} = 0,
	\end{equation} 
	and 
	\begin{equation}
		\|T_{a} \|_{\mathcal{B}(h_{d})} = \underset{m}{\sup} \  \frac{1}{d_{m}}\sum_{n=0}^{\infty}d_{n}\big| a_{n-m} - a_{n+1} \big| < \infty.
	\end{equation}
	
\end{corollary}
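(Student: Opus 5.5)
This is the special case $r = s = d$ of Theorem~\ref{bdd-hrhs}, so the plan is to specialize that theorem and then check that the displayed supremum is really the operator norm of $T_a$ on $h_d$, not just a finiteness criterion.

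\textbf{Step 1 (boundedness criterion).} Setting $r_n = s_n = d_n$ in Theorem~\ref{bdd-hrhs} is allowed, because $d$ is monotonic increasing, unbounded and positive. This gives at once that $T_a \in \mathcal{B}(h_d)$ if and only if $a_n \to 0$ and
\[
\sup_m \frac{1}{d_m}\sum_{n=0}^{\infty} d_n \big|a_{n-m}-a_{n+1}\big| < \infty .
\]
Throughout we use the convention $a_j = 0$ for $j<0$, matching the lower triangular form of $[T_a]$.

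\textbf{Step 2 (identifying the norm).} We take the operator norm formula for $\mathcal{B}(h_d)$ quoted from \cite[Theorem 3.9]{malkowsky2021compact}:
\[
\|T\|_{\mathcal{B}(h_d)}=\sup_m \frac{1}{d_m}\sum_n d_n\Big|\sum_{k=0}^m (a_{nk}-a_{n+1,k})\Big|,
\]
where $(a_{nk})$ are the matrix entries of $T$. For $T_a$ the entries are $a_{nk}=a_{n-k}$, so the inner sum telescopes:
\[
\sum_{k=0}^m (a_{n-k}-a_{n+1-k}) = a_{n-m}-a_{n+1}.
\]
This is the same computation as for $\|T_a(y^{(m)})\|_{h_s}$ in the proof of Theorem~\ref{bdd-hrhs}, and substituting it yields the stated expression for $\|T_a\|_{\mathcal{B}(h_d)}$.

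As an independent check, we can argue directly from the determining set $E=\{d_m^{-1}e^{[m]}\}$. Its elements have norm $1$ in $h_d$, since $\|e^{[m]}\|_{h_d} = d_m$. The unit ball of $c_{00}\cap h_d$ is the absolutely convex hull of $E$, and $c_{00}$ is dense in $h_d$ because $h_d$ has $AK$. Hence $\|T_a\| = \sup_m \|T_a(y^{(m)})\|_{h_d}$, which is again the displayed supremum.

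\textbf{Main obstacle.} The only real subtlety is justifying that the supremum over the determining set equals the operator norm, rather than merely being finite exactly when $T_a$ is bounded. I would handle it through the absolutely convex hull together with the $AK$ density argument just described, or simply by citing \cite[Theorem 3.9]{malkowsky2021compact}.
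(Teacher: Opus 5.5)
Your proposal is correct and takes the same route as the paper: the paper obtains this corollary simply by setting $r=s=d$ in Theorem~\ref{bdd-hrhs}. Your Step 2 is a useful addition. Either by telescoping the quoted norm formula, or by noting that $E$ consists of unit vectors whose absolutely convex hull is the unit ball of $c_{00}\cap h_d$ (dense by $AK$), it justifies the norm identity. The paper's proof of Theorem~\ref{bdd-hrhs} only verifies finiteness of that supremum, not the identity itself.
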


We now turn our attention to the compactness of the convolution operator. It is proved that $T \in \mathcal{B}(h)$ is not compact. However, things may change drastically with the presence of weights. In Theorem \ref{cpt-hrhs}, we obtain a necessary and sufficient condition for $T_a \in \mathcal{B}(h_r, h_s)$ to be compact.

We employ the idea of Hausdorff measure of non-compactness for this purpose. Let $M_{X}$ denote the collection of all bounded subsets of a complete metric space $(X, d)$. The function $\chi: M_{X} \to [0,\infty)$, defined by 
\[\chi(Q)= \inf\bigl\{\epsilon > 0: Q \ \text{has a finite $\epsilon$-net}\bigr\}, \ Q \in M_{X}\]
is called the Hausdorff measure of non-compactness \cite{malkowsky2019advanced}. It is well known that $Q$ is relatively compact if and only if $\chi(Q)=0$ \cite{toledano1997measures}. Basic properties of Hausdorff measure of non-compactness can be found in \cite{banas1980measures,malkowsky2017some,malkowsky2019advanced,toledano1997measures,malkowsky2000introduction,mursaleen2010compactness}.

Let $L \in \mathcal{B}(X,Y)$, where $X,Y$ are Banach spaces. Then the Hausdorff measure of $L$ is denoted by $\|L\|_{\chi}$ and defined by \cite[Theorem 2.23]{malkowsky2000introduction} 
\[\|L\|_{\chi}=\chi\big(L\big(S_{X}\big)\big)\]
where $S_{X}$ denote the unit sphere in $X$. Also we have $L$ is compact if and only if $\ \|L\|_{\chi}=0$ \cite[Theorem 7.11.5]{malkowsky2019advanced}. The following result gives us the bounds of $\chi \big(Q\big)$ for any bounded set in a Banach space $X$.

\begin{theorem}\cite[Theorem 2.23]{malkowsky2000introduction}\label{Theorem10}
	Let $X$ be a Banach space with a Schauder basis $\bigl\{ e_{0}, e_{1}, e_{2}, \cdots \bigr\} $, $Q$ be a bounded subset of $X$, and $P_{n}: X \to X$ the projector onto the linear span of $\bigl\{ e_{0}, e_{1}, e_{2}, \cdots, e_{n}\bigr\}$. Then 
		\[
		\frac{1}{a} \cdot\underset{n \to \infty}{\limsup}\Bigg( \underset{x \in Q}{\sup} \big\| \big(I - P_{n} \big)(x) \big\|\Bigg) \leq \chi \big(Q\big) \leq \underset{n}{\inf}\ \underset{x \in Q}{\sup} \big\| \big(I - P_{n} \big)(x) \big\| \leq \underset{n \to \infty}{\limsup}\Bigg( \underset{x \in Q}{\sup} \big\| \big(I - P_{n} \big)(x) \big\|\Bigg)
			\]
	where $a = \underset{n \to \infty}{\limsup}\ \big\|I-P_{n} \big\|$ is the basis constant of $\bigl\{ e_{0}, e_{1}, e_{2}, \cdots \bigr\} $.
\end{theorem}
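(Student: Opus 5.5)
The plan is to prove the two outer inequalities separately. The middle inequality $\inf_n \le \limsup_n$ is trivial, since an infimum over all $n$ never exceeds a limit superior. Throughout, I use only two facts: the basic properties of $\chi$ (monotonicity, subadditivity under Minkowski sums, and $\chi=0$ on relatively compact sets), and the Schauder basis facts that $P_n x\to x$ for every $x\in X$ and that $\sup_n\|P_n\|<\infty$. The second fact makes $a=\limsup_n\|I-P_n\|$ finite. Moreover $I-P_n\neq 0$ and $(I-P_n)^2=I-P_n$, so $\|I-P_n\|\ge 1$ and hence $a\ge 1$, which makes $1/a$ meaningful.

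\emph{Upper bound.} Fix $n$. Every $x\in Q$ splits as $x=P_nx+(I-P_n)x$, so
\[
Q\subseteq P_n(Q)+(I-P_n)(Q).
\]
The set $P_n(Q)$ is a bounded subset of the finite-dimensional span of $e_0,\dots,e_n$, hence relatively compact, so $\chi(P_n(Q))=0$. The set $(I-P_n)(Q)$ lies in the ball about $0$ of radius $\sup_{x\in Q}\|(I-P_n)x\|$, so its measure of noncompactness is at most that radius. Subadditivity of $\chi$ then gives
\[
\chi(Q)\le \sup_{x\in Q}\|(I-P_n)x\|.
\]
Taking the infimum over $n$ yields the second inequality.

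\emph{Lower bound.} Take any $\varepsilon>\chi(Q)$ and a finite $\varepsilon$-net $z_1,\dots,z_k$ for $Q$. For $x\in Q$, choose $i$ with $\|x-z_i\|<\varepsilon$. Then
\[
\|(I-P_n)x\|\le \|I-P_n\|\,\|x-z_i\|+\|(I-P_n)z_i\|\le \|I-P_n\|\,\varepsilon+\max_{1\le j\le k}\|(I-P_n)z_j\|.
\]
Take the supremum over $x\in Q$ and then $\limsup_{n\to\infty}$. The maximum term tends to $0$, because it is a maximum over finitely many vectors and $(I-P_n)z_j\to0$ for each basis expansion. This leaves
\[
\limsup_{n\to\infty}\sup_{x\in Q}\|(I-P_n)x\|\le a\,\varepsilon.
\]
Letting $\varepsilon\downarrow\chi(Q)$ and dividing by $a$ gives the first inequality.

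The only delicate point is the lower bound. It uses the \emph{finiteness} of the net to pass the limit through the maximum, and it needs the uniform bound on $\|I-P_n\|$ (the basis constant) to control the perturbation term. Both are standard, so I expect no real obstacle beyond bookkeeping. The upper bound needs nothing beyond compactness of bounded sets in finite dimensions.
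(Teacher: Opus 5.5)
Your proof is correct and complete. The upper bound comes from $Q\subseteq P_n(Q)+(I-P_n)(Q)$ with $\chi(P_n(Q))=0$, and the lower bound from pushing a finite $\varepsilon$-net through $I-P_n$, using $1\le a<\infty$ and $(I-P_n)z_j\to 0$. The paper gives no proof of its own here and simply quotes the result from Malkowsky and Rako\v{c}evi\'c, where it is established by this same standard argument.
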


\begin{lemma}\label{Lemma4} Let $a = \underset{n \to \infty}{\limsup}\ \big\|I-P_{n} \big\|$ be the basis constant of $\bigl\{ e_{0}, e_{1}, e_{2}, \cdots \bigr\} $ on the weighted Hahn sequence space $h_{d}$ where $d$ is a monotone increasing sequence of positive real numbers. Then $ a = 1+\alpha$, where $0 \leq \alpha \leq 1$.
\end{lemma}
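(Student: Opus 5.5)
The plan is to bound $\|I-P_n\|_{\mathcal{B}(h_d)}$ between $1$ and $2$ for every $n$. Taking the $\limsup$ then gives $1 \le a \le 2$, which is exactly the claim $a=1+\alpha$ with $0\le\alpha\le 1$. Here $P_n$ is the section map $x\mapsto x^{[n]}$. This is legitimate on $h_d$ because $(e_k)$ is a Schauder basis there, since $h_d$ has $AK$.

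\emph{Lower bound.} The range of $I-P_n$ contains nonzero vectors, for instance $e_{n+1}\in h_d$. Since $I-P_n$ is a projection, $(I-P_n)e_{n+1}=e_{n+1}$, so $\|I-P_n\|\ge 1$ for every $n$. Hence $a\ge 1$.

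\emph{Upper bound.} I would show $\|P_n x\|_{h_d}\le \|x\|_{h_d}$ for all $x\in h_d$, and then use the triangle inequality $\|I-P_n\|\le 1+\|P_n\|\le 2$. For $x\in h_d$,
\[
\|P_n x\|_{h_d}=\sum_{k=0}^{n-1} d_k|x_k-x_{k+1}| + d_n|x_n| .
\]
The key input is the estimate already obtained in the proof that $h_d\subseteq c_0(d)$, namely
\[
d_n|x_n|\le \sum_{k=n}^{\infty} d_k|x_k-x_{k+1}|.
\]
This comes from telescoping $x_n=\sum_{k\ge n}(x_k-x_{k+1})$, using $x\in c_0$, together with monotonicity $d_n\le d_k$ for $k\ge n$. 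Substituting gives $\|P_nx\|_{h_d}\le\|x\|_{h_d}$. Alternatively, one can estimate $(I-P_n)$ directly:
\[
\|(I-P_n)x\|_{h_d}= d_n|x_{n+1}|+\sum_{k\ge n+1} d_k|x_k-x_{k+1}|.
\]
Applying the same tail estimate, now using $d_n\le d_{n+1}$, gives $d_n|x_{n+1}|\le \sum_{k\ge n+1}d_k|x_k-x_{k+1}|$. Hence $\|(I-P_n)x\|_{h_d}\le 2\|x\|_{h_d}$.

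Combining both bounds gives $1\le\|I-P_n\|\le 2$ for all $n$, so $1\le a\le 2$, and we set $\alpha=a-1\in[0,1]$. The only step needing care is the tail inequality, which relies on $x_n\to 0$ and on $d$ being monotone increasing. Both hypotheses are built into the definition of $h_d$ and the standing assumptions, so I expect no real obstacle. The statement does not claim the exact value of $\alpha$, so no extremal sequence needs to be constructed.
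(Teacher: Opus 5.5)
Your proof is correct, but it takes a different route from the paper.

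\textbf{The paper's route.} The paper writes down the diagonal matrix of $I-P_n$ and substitutes it into the operator-norm formula $\|T\|_{\mathcal{B}(h_d)}=\sup_m \frac{1}{d_m}\sum_r d_r\bigl|\sum_{s=0}^m (a_{rs}-a_{r+1,s})\bigr|$, quoted from Malkowsky et al. For $m\ge n+1$ only the rows $r=n$ and $r=m$ contribute, so the norm comes out as $\|I-P_n\|=\sup_{m\ge n+1}\frac{d_n+d_m}{d_m}=1+\frac{d_n}{d_{n+1}}$. This in effect identifies $\alpha=\limsup_n d_n/d_{n+1}$.

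\textbf{Your route.} You work directly from the definition of $\|\cdot\|_{h_d}$. The upper bound uses the tail inequality from the proof of $h_d\subseteq c_0(d)$, and the lower bound uses the test vector $e_{n+1}$.

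\textbf{Comparison.} Your argument is self-contained: it needs only positivity and monotonicity of $d$ and $x\in c_0$, not the cited norm formula or the unboundedness of $d$. It also shows in passing that $\|P_n\|\le 1$. The paper's computation gives the exact value of each $\|I-P_n\|$, which you do not. However, $1\le a\le 2$ is all the statement asserts and all that Theorem~\ref{cpt-hrhs} uses.

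\textbf{Sharpening your estimate.} Your direct estimate already gives the sharp constant if you keep the factor $\frac{d_n}{d_{n+1}}$ instead of bounding it by $1$. The bound $d_n|x_{n+1}|\le \frac{d_n}{d_{n+1}}\sum_{k\ge n+1}d_k|x_k-x_{k+1}|$ yields $\|I-P_n\|\le 1+\frac{d_n}{d_{n+1}}$. The unit vector $\frac{1}{d_{n+1}}e^{[n+1]}$ attains this bound. So you recover the paper's exact value without the cited formula.
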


\begin{proof}
Let $ [I-P_{n}] = \big(a_{rs}\big)_{r,s=0}^{\infty} $ be the matrix representation of the operator $ (I- P_{n}) $ with respect to standard ordered basis where $n \in \mathbb{N}_{0}$. Then \[a_{rs}=
	\begin{cases}
		0, & \mathrm{for}\ r \neq s,\\
		0, & \mathrm{for}\ r=s\ \mathrm{and}\ 0  \leq r \leq n, \\
		1, & \mathrm{for}\ r=s\ \mathrm{and}\ r \geq n+1.
	\end{cases} \]
	Now, 
	\begin{align*}
		\big\| I- P_{n} \big\|_{\mathcal{B}(h_{d})}
		& =\underset{m}{\sup} \Bigg(\frac{1}{d_{m}} \sum_{r=0}^{\infty}d_{r} \Big|\sum_{s=0}^{m}\big(a_{rs} - a_{r+1,s}\big)\Big|\Bigg) \\
		& = \underset{m \geq n+1}{\sup} 
		\Bigg( \frac{1}{d_{m}} \sum_{r=0}^{\infty}d_{r} \Big|0 + a_{r,n+1} - a_{r+1,n+1} +  \cdots + a_{r,m} - a_{r+1,m}\Big|\Bigg)\\
		& = \underset{m \geq n+1}{\sup}
		\Bigg( \frac{d_{n}+d_{m}}{d_{m}} \Bigg) \\
		& = 1+\alpha, \ \text{where}\ 0\leq\alpha\leq1.
	\end{align*}
	Therefore, $a = 1+\alpha$.
\end{proof}

Let $A= \big(a_{tk}\big)_{t,k=0}^\infty$ be an infinite matrix and $A^{<n>} = \big(a_{tk}^{<n>}\big)_{t,k=0}^\infty$ denote the matrix where \[a_{tk}^{<n>}=
\begin{cases*}
	0,\ \ \ \ \text{for}\ 0 \leq t \leq n, \\
	a_{tk},\ \text{for}\ t > n.
\end{cases*}\]
Also let $T^{<n>}$ denotes the operator represented by the matrix $A^{<n>}$.

\begin{theorem} \label{cpt-hrhs}
	$T_{a} \in \mathcal{B}(h_{r},h_{s})$ is compact if and only if
	\begin{equation*}
		\underset{n \to \infty}{\limsup}\Big(\underset{m}{\sup}\ 	\Gamma_{(m,n)}^{(r,s)}\Big)=0,
	\end{equation*}
	where
	\begin{equation*}
		\Gamma_{(m,n)}^{(r,s)}= \frac{1}{r_{m}}\Bigg(s_{n} \Big|\sum_{k=0}^{m} a_{n+1-k}\Big| + \sum_{t=n+1}^{\infty}s_{t}\Big|a_{t-m} - a_{t+1}\Big|\Bigg).
	\end{equation*}
\end{theorem}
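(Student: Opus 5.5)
The plan is to compute the Hausdorff measure of noncompactness $\|T_a\|_\chi=\chi(T_a(S_{h_r}))$ and show that it is comparable, up to the constant $1/(1+\alpha)$, to $\limsup_n \sup_m \Gamma^{(r,s)}_{(m,n)}$. Theorem \ref{Theorem10} applies in $h_s$, with the standard basis and the projectors $P_n$. Lemma \ref{Lemma4} gives the basis constant $a=1+\alpha\in[1,2]$, so
\[
\frac{1}{1+\alpha}\limsup_{n\to\infty}\sup_{x\in S_{h_r}}\|(I-P_n)T_ax\|_{h_s}\le \|T_a\|_\chi \le \limsup_{n\to\infty}\sup_{x\in S_{h_r}}\|(I-P_n)T_ax\|_{h_s}.
\]
Compactness of $T_a$ is therefore equivalent to
\[
\lim_{n\to\infty}\sup_{x\in S_{h_r}}\|(I-P_n)T_ax\|_{h_s}=0.
\]
Since $(I-P_n)T_a$ is exactly the operator $T^{<n>}$ represented by $A^{<n>}$ (rows $0,\dots,n$ of $[T_a]$ set to zero), the quantity to compute is
\[
\sup_{x\in S_{h_r}}\|(I-P_n)T_ax\|_{h_s}=\|T^{<n>}\|_{\mathcal{B}(h_r,h_s)}.
\]

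The second step is to evaluate this operator norm. Boundedness of $T^{<n>}$ is clear from that of $T_a$, since $T^{<n>} = (I-P_n)T_a$. Because $E=\{r_m^{-1}e^{[m]}\}$ is a determining set for $h_r$, the norm of any bounded matrix operator $h_r\to h_s$ is the supremum of $\|L(y^{(m)})\|_{h_s}$ over $m$. I would justify this as follows:
\begin{itemize}
\item the closed unit ball of $c_{00}$ is the absolutely convex hull of $E$;
\item $c_{00}$ is dense in $h_r$ by the $AK$ property.
\end{itemize}
This is the same formula the paper quotes from \cite[Theorem 3.9]{malkowsky2021compact}, now with weights $r$ and $s$.

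The third step is the direct computation of $\|T^{<n>}y^{(m)}\|_{h_s}$, which mirrors the computation in the proof of Theorem \ref{bdd-hrhs}. Write $b_t=(T_ay^{(m)})_t=r_m^{-1}\sum_{k=0}^m a_{t-k}$, with $a_j=0$ for $j<0$. The sequence $T^{<n>}y^{(m)}$ equals $0$ for $t\le n$ and $b_t$ for $t>n$. Its $h_s$-norm therefore splits into two pieces:
\begin{itemize}
\item the jump at index $n$, contributing $s_n|0-b_{n+1}|=r_m^{-1}s_n\bigl|\sum_{k=0}^m a_{n+1-k}\bigr|$;
\item the tail, $\sum_{t\ge n+1}s_t|b_t-b_{t+1}|$, which telescopes to $r_m^{-1}\sum_{t\ge n+1}s_t|a_{t-m}-a_{t+1}|$.
\end{itemize}
Their sum is exactly $\Gamma^{(r,s)}_{(m,n)}$. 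Hence $\|T^{<n>}\|=\sup_m\Gamma^{(r,s)}_{(m,n)}$, and the two-sided estimate above shows that $\|T_a\|_\chi=0$ if and only if the stated $\limsup$ vanishes. Since $\sup_m \Gamma$ is nonnegative, $\limsup=0$ is the same as $\lim=0$.

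The main obstacle is the second step: justifying that the operator norm of the truncated matrix $A^{<n>}$ is attained on the determining set. One must check that the absolutely convex hull of $E$ is norm-dense in the unit ball of $h_r$, not merely in the unit ball of $c_{00}$, so that the supremum over $S_{h_r}$ equals the supremum over $E$. I would handle this through $AK$ and continuity of $T^{<n>}$. I would also check that Theorem \ref{Theorem10} is legitimately applied with the basis of the codomain $h_s$, which needs $s$ monotone so that Lemma \ref{Lemma4} holds.
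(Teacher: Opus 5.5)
Your proposal is correct and follows essentially the same route as the paper. Both arguments combine Theorem~\ref{Theorem10} with the basis constant $1+\alpha$ from Lemma~\ref{Lemma4}, identify $(I-P_n)T_a$ with $T^{<n>}$, and evaluate $\|T^{<n>}\|_{\mathcal{B}(h_r,h_s)}$ on the determining set to get $\sup_m \Gamma^{(r,s)}_{(m,n)}$. The one addition on your side is the density argument (determining set plus $AK$), which spells out the operator-norm formula for distinct weights $r,s$ that the paper uses without comment.
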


\begin{proof}
	We know, $T_{a}$ is compact if and only if $\big\|T_{a}\big\|_{\chi} = 0$, where $\big\|T_{a}\big\|_{\chi} = \chi\big(T_{a}\big(S_{h_{r}}\big)\big)$. From Lemma \ref{Lemma4} and using the fact that $T_{a} \in \mathcal{B}(h_{r},h_{s})$ we have
	\[
	\frac{1}{1+\alpha} \cdot\underset{n \to \infty}{\limsup}\Bigg( \underset{y \in T_{a}(S_{h_{r}})}{\sup} \big\| \big(I - P_{n} \big)(y) \big\|\Bigg) \leq \chi \big(T_{a}(S_{h_{r}})\big)= \big\| T_{a}\big\|_{\chi} \leq \underset{n \to \infty}{\limsup}\Bigg( \underset{y \in T_{a}(S_{h_{r}})}{\sup} \big\| \big(I - P_{n} \big)(y) \big\|\Bigg)
	\]
	where $P_{n}$ denotes the projector onto the linear span of $\bigl\{ e_{0}, e_{1}, e_{2}, \cdots, e_{n}\bigr\}$. Let $y \in T_{a}(S_{h_{r}})$. Then $y = T_{a}(x)$ for some $x \in S_{h_{r}}$. Now, 
	\begin{align*}
		&\underset{y \in T_{a}(S_{h_{r}})}{\sup} \|\big(I - P_{n}\big)(y)\|_{h_{s}}\\
		&= \underset{x \in S_{h_{r}}}{\sup} \|\big(I - P_{n}\big)(T_{a}(x))\|_{h_{s}} \\
		&= \underset{x \in S_{h_{r}}}{\sup} \| T_{a}^{<n>}(x)\|_{h_{s}}\\
		&= \|T_{a}^{<n>}\|_{\mathcal{B}(h_{r},h_{s})}\\
		&= \|A^{<n>}\|_{(h_{r}:h_{s})} \\
		&= \underset{m}{\sup} \ \frac{1}{r_{m}} \sum_{t=0}^{\infty}s_{t} \Big|\sum_{k=0}^{m}\Big(a_{t,k}^{<n>} - a_{t+1,k}^{<n>}\Big)\Big| \\
		&= \underset{m}{\sup} \ \frac{1}{r_{m}} \Bigg(s_{n}\Big|\sum_{k=0}^{m} a_{n+1-k}\Big| + \sum_{t=n+1}^{\infty}s_{t}\Big|\sum_{k=0}^{m}\Big(a_{t-k} - a_{t+1-k}\Big)\Big|\Bigg) \\
		&= \underset{m}{\sup} \ \frac{1}{r_{m}} \Bigg(s_{n}\Big|\sum_{k=0}^{m} a_{n+1-k}\Big| + \sum_{t=n+1}^{\infty}s_{t}\Big|a_{t-m} - a_{t+1}\Big|\Bigg) \\
		&= \underset{m}{\sup} \ \Gamma_{(m,n)}^{(r,s)}.
	\end{align*}
	This proves the result.	
\end{proof}

\begin{corollary} \label{cpt-h}
	The convolution operator $T_{a}\in \mathcal{B}(h_{d})$ is not a compact operator.
\end{corollary}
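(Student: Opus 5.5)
The plan is to apply Theorem~\ref{cpt-hrhs} with $r=s=d$ and show that the quantity $\sup_m \Gamma_{(m,n)}^{(d,d)}$ stays bounded below by a fixed positive constant for every $n$. Then the $\limsup$ cannot be zero, and $T_a$ is not compact. The positive constant will come from $a_0 \neq 0$, which the standing assumption on the defining sequence $a$ gives us.

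\textbf{Step 1: discard all but one term.} Fix $n \in \mathbb{N}_0$. Every term in $\Gamma_{(m,n)}^{(d,d)}$ is nonnegative, so we may drop the first term $d_n\bigl|\sum_{k=0}^m a_{n+1-k}\bigr|$. From the tail sum we keep only the index $t=m$, which is allowed as soon as $m \ge n+1$. This gives
\[
\Gamma_{(m,n)}^{(d,d)} \;\ge\; \frac{1}{d_m}\, d_m \bigl|a_{0} - a_{m+1}\bigr| \;=\; \bigl|a_0 - a_{m+1}\bigr|, \qquad m \ge n+1 .
\]
The key point is that the weight $d_m$ attached to the index $t=m$ cancels exactly against the normalising factor $1/d_m$.

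\textbf{Step 2: let $m \to \infty$.} Since $T_a \in \mathcal{B}(h_d)$, Corollary~\ref{bdd-hd} forces $a_n \to 0$. Letting $m\to\infty$ in Step 1 therefore gives
\[
\sup_m \Gamma_{(m,n)}^{(d,d)} \;\ge\; \lim_{m\to\infty}\bigl|a_0 - a_{m+1}\bigr| \;=\; |a_0|
\]
for every $n$.

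\textbf{Step 3: conclude.} Step 2 gives $\limsup_{n\to\infty}\sup_m \Gamma_{(m,n)}^{(d,d)} \ge |a_0| > 0$. By Theorem~\ref{cpt-hrhs}, $T_a$ is not compact.

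There is no serious obstacle in this argument. The only points needing care are:
\begin{itemize}
\item checking that the index $t=m$ actually lies in the tail range $t \ge n+1$, which is why we restrict to $m \ge n+1$;
\item noting that the convention $a_j = 0$ for $j<0$ is not used here, since $t-m=0$.
\end{itemize}
The argument holds for an arbitrary admissible weight $d$, and in particular for the Hahn space $h$ itself.
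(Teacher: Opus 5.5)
Your proof is correct and follows essentially the same route as the paper: both apply Theorem~\ref{cpt-hrhs} with $r=s=d$ and isolate the tail term $t=m$, where $d_m$ cancels against $1/d_m$. The only difference is cosmetic: the paper fixes $m=n+1$ to get the bound $|a_0|-|a_{n+2}|$ and then lets $n\to\infty$, whereas you fix $n$ and let $m\to\infty$ to get $|a_0|$ directly.
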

\begin{proof} 
	For $r=s=d$, from Theorem \ref{cpt-hrhs} we have, $T_{a}$ is compact on $h_{d}$ if and only if $\underset{n \to \infty}{\limsup}\Big(\underset{m}{\sup} \ \Gamma_{(m,n)}^{(d)}\Big)=0$, where 
	\begin{equation*}
		\Gamma_{(m,n)}^{(d)}= \frac{1}{d_{m}}\Bigg(d_{n} \Big|\sum_{k=0}^{m} a_{n+1-k}\Big| + \sum_{t=n+1}^{\infty}d_{t}\Big|a_{t-m} - a_{t+1}\Big|\Bigg).
	\end{equation*}
	  From Corollary \ref{bdd-hd}, we have $|a_{n}| \to 0$ as $n \to \infty$. Also, \[\underset{m}{\sup} \ \Gamma_{(m,n)}^{(d)} \geq 
	\frac{1}{d_{k}}\sum_{t=n+1}^{\infty}d_{t}\big|a_{t-k}-a_{t+1}\big|,\] for all $k\in \mathbb{N}_{0}$. Choosing $k=n+1$, we have
	\begin{align*}
		\underset{m}{\sup} \ \Gamma_{(m,n)}^{(d)} &\geq\frac{1}{d_{n+1}}\sum_{t=n+1}^{\infty} d_{t} \big|a_{t-n-1} - a_{t+1}\big| \\
		&\geq \frac{1}{d_{n+1}} d_{n+1} \big|a_{0}-a_{n+2}\big|\\
		&\geq \big(|a_{0}| - |a_{n+2}|\big).
	\end{align*}
	This shows that $\underset{n\to \infty}{\limsup}\Big(\underset{m}{\sup} \ \Gamma_{(m,n)}^{(d)}\Big) \geq |a_{0}| \not=0$. Therefore, $T_{a}$ is not a compact operator.
\end{proof}	
%	We can now focus on characterizing the set $\mathscr{M}(h_{d})$, the multiplier of $h_{d}$

\section{The multiplier algebras $\mathscr{M}(h_{d})$ and $\mathscr{M}(bs_{d})$} \label{sect5}

This section is devoted to characterize the multipliers $\mathscr{M}(h_{d})$ and $\mathscr{M}(bs_{d}).$ Recall that for any sequence space $X$, the multiplier of $X,$ denoted by $\mathscr{M}(X),$ is defined by 
\[
\mathscr{M}(X) = \Bigl\{a \in \mathbb{C}^{\mathbb{N}_{0}} : a*x \in X \ \text{for all} \ x \in X \Bigr\}.
\]
As $h_d$ and $bs_d$ are Banach spaces, the following relations follow from the closed graph theorem
\begin{enumerate}
    \item[(i)] $T_a \in \mathcal{B}(h_d) \iff a \in \mathscr{M}(h_d),$
    \item[(ii)] $T_a \in \mathcal{B}(bs_d) \iff a \in \mathscr{M}(bs_d).$
\end{enumerate}
It is also easy to verify that $ \mathscr{M}(h_d),$ and $ \mathscr{M}(bs_d)$ form algebras with respect to the convolution operation. In this regard, we have the following lemma.

\begin{lemma} \label{MHd}	
	The multiplier of the weighted Hahn sequence space $h_{d}$ is contained in $h_{d}$, i.e.,	$\mathscr{M}(h_{d}) \subseteq h_{d}$.
\end{lemma}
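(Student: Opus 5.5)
The plan is to apply the multiplier to a test element of $h_d$ whose convolution reproduces $a$ itself. The natural choice is $e_0=(1,0,0,\cdots)$, since $a*e_0=a$. So the whole argument comes down to checking that $e_0\in h_d$.

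First, $e_0\in c_0$ trivially. Second,
\[
\|e_0\|_{h_d}=\sum_{n=0}^{\infty} d_n\,|(e_0)_n-(e_0)_{n+1}| = d_0\,|1-0| = d_0<\infty .
\]
Hence $e_0\in h_d$; more generally $c_{00}\subseteq h_d$. Now take $a\in\mathscr{M}(h_d)$. By definition $a*x\in h_d$ for every $x\in h_d$, and in particular
\[
a = a*e_0 \in h_d .
\]
Indeed $(a*e_0)_n=\sum_{k=0}^n a_{n-k}(e_0)_k=a_n$. This gives $\mathscr{M}(h_d)\subseteq h_d$.

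As a consistency check, the same conclusion follows from Corollary~\ref{bdd-hd} together with the closed graph relation (i) stated above. If $a\in\mathscr{M}(h_d)$, then $T_a\in\mathcal{B}(h_d)$, and so $a_n\to 0$. Moreover, the zeroth column of $[T_a]$ is $a$, and it lies in $h_d$ by condition (b) in the proof of Theorem~\ref{bdd-hrhs}. This yields the same inclusion.

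There is no real obstacle here. The only point needing care is verifying that $e_0$ belongs to $h_d$, in particular that it lies in $c_0$ and has finite weighted variation; both checks are immediate.
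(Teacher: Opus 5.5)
Your proof is correct and matches the paper's: apply the multiplier to $e_0$ and use $a*e_0=a$. You also check explicitly that $e_0\in h_d$ (with $\|e_0\|_{h_d}=d_0$), which the paper leaves implicit, and your closed-graph consistency check is valid but not needed.
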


\begin{proof}
Let $a \in \mathscr{M}(h_{d}) $. Then, $T_{a}(x)= a*x \in h_{d}$ for all $x \in h_{d}$. Choose $x = e_{0}$. Then,
\begin{equation*}
	T_{a}(e_{0})= a*e_{0} = a \in h_{d}.
\end{equation*}
Therefore, $\mathscr{M}(h_{d}) \subseteq h_{d}$.
\end{proof}

The nature of the inclusion relation $\mathscr{M}(h_d) \subseteq h_{d}$ depends on the weight sequence $d$. The following example and Theorem \ref{m_h} provide important characterization of $\mathscr{M}(h_{d})$.
\begin{remark}
It is not necessary that $\mathscr{M}(h_{d})= h_{d} $ holds. Consider $d_{n} = (n+1)^{n+1} $. For $e_{1} = \big(0, 1, 0, 0, \cdots \big),\  \|e_{1}\|_{h_{d}}= \sum_{n=0}^{\infty} d_{n}\big|a_{n} - a_{n+1}\big|= d_{0}+ d_{1} = 1+4 = 5 < \infty $. However,
\begin{align*}
	\big\|  T_{e_{1}} \big\|_{\mathcal{B}(h_{d})} 
		&= \underset{m}{\sup} \frac{1}{d_{m}} \sum_{n=0}^{\infty} d_{n} \big|a_{n-m} - a_{n+1}\big| \\
		&= \underset{m}{\sup} \frac{1}{d_{m}} \Big(d_{0} + d_{m+1} \Big) \\
		&= \underset{m}{\sup} \frac{1}{(m+1)^{m+1}} \Big(1 + (m+2)^{m+2}\Big) \\
		&= \infty.
	\end{align*}
	Hence, $ \mathscr{M}(h_{d}) \neq h_{d} $.
\end{remark}
However, with some particular choice of the weight vector $d$ the relation $ \mathscr{M}(h_{d}) = h_{d} $ hold good. In the next result we construct such class of weight vectors.

\begin{theorem}\label{mult-hd-rn}
    Let $r>1$ be any real number and $d_{n} = r^{n}$, for all $n \in \mathbb{N}_{0}$. Then $\mathscr{M}(h_{d}) = h_{d}$.
\end{theorem}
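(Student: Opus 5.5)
By Lemma \ref{MHd} we already have $\mathscr{M}(h_d)\subseteq h_d$, so it remains to show $h_d\subseteq \mathscr{M}(h_d)$. Fix $a\in h_d$ with $d_n=r^n$. By the closed graph remark, it suffices to verify the two conditions of Corollary \ref{bdd-hd}. The condition $\lim_n a_n=0$ is immediate, since $h_d\subseteq c_0$. So the real task is to bound
\[
\sup_m \frac{1}{r^m}\sum_{n=0}^{\infty} r^n\,\big|a_{n-m}-a_{n+1}\big|,
\]
with the convention $a_j=0$ for $j<0$, by a constant times $\|a\|_{h_d}$.

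The key step is to split the sum at $n=m$.

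For $n<m$ we have $a_{n-m}=0$, so the terms are $r^{n-m}|a_{n+1}|$. By the earlier lemma $h_d\subseteq c_0(d)$, and in fact its proof gives $r^{j}|a_j|\le\sum_{k\ge j} r^k|a_k-a_{k+1}|\le \|a\|_{h_d}$. Hence
\[
r^{n-m}|a_{n+1}|\le r^{-m-1}\|a\|_{h_d}.
\]
Summing over $n<m$ gives at most $m\,r^{-m-1}\|a\|_{h_d}$, which is bounded in $m$ because $r>1$. A sharper route is to write $|a_{n+1}|\le\sum_{k\ge n+1}|a_k-a_{k+1}|$ and exchange the order of summation, but the crude bound already suffices.

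For $n\ge m$, set $j=n-m$ and use the triangle inequality:
\[
|a_j-a_{j+m+1}|\le\sum_{i=j}^{j+m}|a_i-a_{i+1}|.
\]
The resulting contribution is
\[
\sum_{j\ge0} r^{j}\sum_{i=j}^{j+m}|a_i-a_{i+1}|.
\]
Exchanging the order of summation, each $|a_i-a_{i+1}|$ collects the weight $\sum_{j=\max(0,i-m)}^{i} r^j\le \frac{r}{r-1}\,r^{i}$. This part is therefore at most $\frac{r}{r-1}\|a\|_{h_d}$, uniformly in $m$. This is exactly where the geometric weight is used: the dilation $r^{n}/r^{m}=r^{j}$ turns the shifted sum back into $d$-weights, and the geometric series absorbs the $m+1$ overlapping differences.

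Combining the two parts gives
\[
\|T_a\|_{\mathcal{B}(h_d)}\le C_r\|a\|_{h_d}<\infty,
\]
so $a\in\mathscr{M}(h_d)$. The main obstacle I expect is the bookkeeping in the $n\ge m$ range: making sure the exchange of summation produces weights bounded by a constant multiple of $r^i$ independent of $m$. The geometric sum handles this, and it is the point that fails for faster-growing weights such as $(n+1)^{n+1}$ in the preceding remark.
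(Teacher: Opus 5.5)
Your proof is correct and follows essentially the same route as the paper. Both split the sum at $n=m$, telescope $|a_j-a_{j+m+1}|\le\sum_{i=j}^{j+m}|a_i-a_{i+1}|$ for the $n\ge m$ part, and absorb the $m+1$ overlapping differences with a geometric series in $r^{-1}$. The only cosmetic differences are in the details. The paper bounds the $n<m$ part by $\|a\|_{\ell^\infty}\sum_{k=1}^{m}r^{-k}$, whereas you use the weighted decay $r^{j}|a_j|\le\|a\|_{h_d}$. The paper also regroups the double sum by the shift $p$, giving $\sum_{p=0}^{m}r^{-p}\|a\|_{h_d}$, rather than by the index $i$.
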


\begin{proof}
    We already know $\mathscr{M}(h_{d}) \subseteq h_{d}$. To prove the reverse inclusion $h_{d} \subseteq \mathscr{M}(h_{d})$, let $a \in h_{d}$. We prove that $T_{a} \in \mathcal{B}(h_{d})$. As $a \in h_{d}, \ \sum_{n=0}^{\infty} r^{n} |a_{n}-a_{n+1}| < \infty$. Also, as $a \in h_{d} \subseteq c_{0},$ we have 
    \[\|a\|_{{\ell^\infty}} = \sup_{n\in \mathbb{N}_0} |a_n|< \infty.\] 
    We need to show $\underset{m}{\sup} \frac{1}{r^{m}}\sum_{n=0}^{\infty} r^{n} |a_{n-m} - a_{n+1}| < \infty$. For $m \in \mathbb{N}_0,$
    \begin{align*}
        &\frac{1}{r^{m}} \sum_{n=0}^{\infty} r^{n}|a_{n-m} - a_{n+1}|\\
        = & \frac{1}{r^{m}} \sum_{n=0}^{m-1} r^{n} |a_{n+1}| + \frac{1}{r^{m}} \sum_{n=0}^{\infty} r^{n+m} |a_{n} - a_{n+m+1}| \\
        \leq & \frac{1}{r^{m}} \sum_{n=0}^{m-1} r^{n} |a_{n+1}| + \sum_{n=0}^{\infty} r^{n} \left( \sum_{p=0}^m |a_{n+p} - a_{n+p+1}| \right) \\
        \leq & \frac{1}{r^{m}} \sum_{n=0}^{m-1} r^{n} |a_{n+1}| + \left(\sum_{n=0}^{\infty} r^{n} |a_{n} - a_{n+1}| + \frac{1}{r} \sum_{n=0}^{\infty} r^{n+1} |a_{n+1} - a_{n+2}| + \cdots + \frac{1}{r^{m}} \sum_{n=0}^{\infty} r^{n+m} |a_{n+m} - a_{n+m+1}|\right) \\
        \leq & \|a\|_{{\ell^\infty}} \sum_{n=1}^{m} \frac{1}{r^{n}} + \|a\|_{h_{d}} \sum_{k=0}^{m}\frac{1}{r^{k}}.
    \end{align*}
As $r>1$, the series $\sum_{n=0}^{\infty}\frac{1}{r^{n}}$ is convergent. Therefore, $\underset{m}{\sup} \frac{1}{r^{m}} \sum_{n=0}^{m} r^{n}|a_{n-m} - a_{n+1}|< \infty$. Hence, the desired result follows.
\end{proof}

\begin{theorem}\label{m_h}
	The multiplier of the Hahn sequence space $h$ is $h$ itself, i.e., $ \mathscr{M}(h) = h $.
\end{theorem}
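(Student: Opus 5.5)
By Lemma~\ref{MHd} (with $d_n=n+1$) we already have $\mathscr{M}(h)\subseteq h$, so the whole task is the reverse inclusion $h\subseteq\mathscr{M}(h)$. Fix $a\in h$. By Corollary~\ref{bdd-hd} with $d_n=n+1$, it suffices to check two things: that $a_n\to 0$, which holds because $h\subseteq c_0$, and that
\[
\sup_m \frac{1}{m+1}\sum_{n=0}^{\infty}(n+1)\,|a_{n-m}-a_{n+1}|<\infty .
\]
Throughout, write $\Delta_p=a_p-a_{p+1}$, so that $\|a\|_h=\sum_p (p+1)|\Delta_p|$. The two tools are the pointwise bound from the proof of the inclusion $h_d\subseteq c_0(d)$, namely $(N+1)|a_N|\le\sum_{n\ge N}(n+1)|\Delta_n|\le\|a\|_h$, and the tail bound $|a_j-a_k|\le R_j:=\sum_{p\ge j}|\Delta_p|$ for $k>j$.

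\textbf{Step 1: terms with $n<m$.} Here $a_{n-m}=0$, so the contribution is $\frac{1}{m+1}\sum_{n=0}^{m-1}(n+1)|a_{n+1}|$. Since $(n+1)|a_{n+1}|\le (n+2)|a_{n+1}|\le \|a\|_h$, this contribution is at most $\frac{m}{m+1}\|a\|_h\le\|a\|_h$.

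\textbf{Step 2: terms with $n\ge m$.} Substitute $n=j+m$, which gives $\frac{1}{m+1}\sum_{j\ge0}(j+m+1)|a_j-a_{j+m+1}|$. I split at $j=m$.

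For $j\le m$, use $j+m+1\le 2(m+1)$ and $|a_j-a_{j+m+1}|\le R_j$. Interchanging the sums,
\[
\sum_{j=0}^{m}R_j=\sum_p |\Delta_p|\,\bigl(\min(p,m)+1\bigr)\le\|a\|_h,
\]
so this part is at most $2\|a\|_h$.

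For $j>m$, use $j+m+1\le 2j+1\le 2(j+1)$ together with $|a_j-a_{j+m+1}|\le\sum_{p=j}^{j+m}|\Delta_p|$. Interchanging sums again, each $p$ receives the coefficient $2\sum_{j=p-m}^{p}(j+1)\le 2(m+1)(p+1)$. Dividing by $m+1$, this part is at most $2\|a\|_h$.

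\textbf{Conclusion and main obstacle.} Adding the pieces gives $\|T_a\|_{\mathcal{B}(h)}\le 5\|a\|_h$, uniformly in $m$. Hence $T_a\in\mathcal{B}(h)$, so $a\in\mathscr{M}(h)$. The delicate point is the range $j\le m$. The naive estimate $|a_j|\le\|a\|_h/(j+1)$ only yields a bound of order $\log m$ there. Similarly, telescoping uniformly over all $j$ produces a stray term $m\sum|\Delta_p|$. Using the tail sums $R_j$ and exploiting $\min(p,m)+1\le p+1$ is what removes the factor $m$. This is exactly the step that fails for fast-growing weights, compare the Remark with $d_n=(n+1)^{n+1}$.
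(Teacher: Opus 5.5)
Your proof is correct and complete, and it follows the paper's outline: reduce to bounding $t_m=\frac{1}{m+1}\sum_n(n+1)|a_{n-m}-a_{n+1}|$, peel off the terms $n<m$, and shift $n=j+m$ in the rest. The two routes differ in three places.

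\emph{Reduction.} The paper gets the sup condition from Rao's characterization of $(h:h)$ (Lemma~\ref{lemma1}), so it has to verify the column condition separately. You invoke Corollary~\ref{bdd-hd}, which already absorbs that condition.

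\emph{Terms with $n<m$.} The paper argues qualitatively, as a Ces\`aro mean of the null sequence $(n+1)|a_{n+1}|$. Your uniform bound $(N+1)|a_N|\le\|a\|_h$ is quantitative. The paper's stated reason for $(n+1)|a_{n+1}|\to0$ is $\sum_n|a_n|<\infty$, which is not sufficient by itself. Your appeal to the $c_0(d)$ estimate is the right justification.

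\emph{The shifted sum.} Here the paper splits the weight rather than the index range. It writes $n+m+1=m+(n+1)$ and bounds the $m$-part by $m(|a_n|+|a_{n+m+1}|)$, summed via $\|a\|_{\ell^1}$. It telescopes only the $(n+1)$-part, where each of the $m+1$ shifted blocks $\sum_n(n+1)|a_{n+i}-a_{n+i+1}|$ is at most $\|a\|_h$. The stray term $m\sum_p|a_p-a_{p+1}|$ you flag never arises there, because the $m$-weighted piece is never telescoped.

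\emph{What each route buys.} The paper's version is shorter but relies on the embedding $\|a\|_{\ell^1}\le\|a\|_h$, which it asserts without proof. Your identity $\sum_{j\le m}R_j=\sum_p(\min(p,m)+1)|a_p-a_{p+1}|\le\|a\|_h$ is in effect a proof of that embedding. So your argument is self-contained within the paper and gives the explicit bound $\|T_a\|_{\mathcal{B}(h)}\le5\|a\|_h$. The paper's split, combined with your pointwise bound, would give $4\|a\|_h$.
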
		
\begin{proof}
	It is enough to show that $h \subseteq \mathscr{M}(h)$. Let us assume that $ a \in h $. Then $a_{n} \to 0$ as $n \to \infty$. So, condition $(i)$ of Lemma \ref{lemma1} is satisfied. As, $a \in h,\ \sum_{n=0}^{\infty}(n+1)|a_{n} - a_{n+1}|< \infty$. This implies that $\sum_{n=0}^{\infty}(n+k+1)|a_{n} - a_{n+1}|< \infty$ for every $k \in \mathbb{N}_{0}$. %(by comparison test, limit form)
	So, condition $(ii)$ of Lemma \ref{lemma1} is also satisfied. For the condition $(iii)$ of Lemma \ref{lemma1}, let
	\begin{align*}
		t_{m} 
		& = \frac{1}{m+1} \sum_{n=0}^{\infty}(n+1) \Big|\sum_{k=0}^{m} (a_{nk} - a_{n+1,k}) \Big| \\
		& = \frac{1}{m+1} \sum_{n=0}^{\infty}(n+1) \Big|\sum_{k=0}^{m} (a_{n-k} - a_{n+1-k}) \Big| \\
		&= \frac{1}{m+1} \sum_{n=0}^{\infty}(n+1) \Big| a_{n-m} - a_{n+1} \Big|
	\end{align*}
	where $a_{-n} = 0$ for all $n \in \mathbb{N} $. As $a \in h$, $ \sum_{n=0}^{ \infty} |a_{n}| < \infty $. So $ (n+1) |a_{n+1}| \to 0$ as $n \to \infty$. Therefore, $ \frac{\sum_{n=0}^{m-1}(n+1)|a_{n+1}|}{m} \to 0 \ \text{as} \ j \ \to \infty $, which implies
	\begin{equation}
		\underset{m}{\sup} \  \frac{1}{m+1} \sum_{n=0}^{m-1} (n+1)|a_{n+1}| < \infty .
	\end{equation}
	As $ a \in h $, we have 
	\begin{align}
		t_{0} = \sum_{n=0}^{\infty}(n+1) |a_{n} -a_{n+1}| < \infty .
	\end{align}
	Let $ m \geq 1 $. Then,
	\begin{align*}
		t_{m} 
		&= \frac{1}{m+1} \sum_{n=0}^{\infty} (n+1)| a_{n-m} - a_{n+1} |\\
		&= \frac{1}{m+1} \sum_{n=0}^{m-1}(n+1)|a_{n+1}| \ + \ \frac{1}{m+1} \sum_{n=m}^{\infty}(n+1) |a_{n-m} - a_{n+1} |\\
		&= \frac{1}{m+1} \sum_{n=0}^{m-1} (n+1)|a_{n+1}| \ + \ \frac{1}{m+1} \sum_{n=0}^{\infty} (n+m+1) | a_{n} - a_{n+m+1}|\\
		&\leq \frac{1}{m+1} \sum_{n=0}^{m-1} (n+1) |a_{n+1}| \ + \ \frac{m}{m+1} \sum_{n=0}^{ \infty} \big(|a_{n}| + |a_{n+m+1}|\big) \ + \ \frac{1}{m+1} \sum_{n=0}^{ \infty}(n+1) | a_{n} - a_{n+m+1}|\\
		&\leq \frac{1}{m+1} \sum_{n=0}^{m-1} (n+1) |a_{n+1}|\ + \ \frac{m}{m+1} \sum_{n=0}^{\infty} |a_{n+m+1}| \ + \ \frac{m}{m+1} \sum_{n=0}^{\infty}|a_{n}| \ +\\
		&\frac{1}{m+1} \sum_{n=0}^{\infty}(n+1)\Bigl\{ |a_{n} - a_{n+1}|\  +  \ |a_{n+1} - a_{n+2}| \ + \ ... \ + \ |a_{n+m} - a_{n+m+1}|\Bigr\}\\
		&= \frac{1}{m+1} \sum_{n=0}^{m-1} (n+1) |a_{n+1}| \ + \ \frac{m}{m+1} \sum_{n=0}^{\infty}|a_{n+m+1}| \ + \ \frac{m}{m+1} \sum_{n=0}^{\infty} |a_{n}| \ + \\
		&\frac{1}{m+1} \sum_{n=0}^{\infty}(n+1)|a_{n} - a_{n+1}| \ +...+ \frac{1}{m+1} \sum_{n=0}^{\infty}(n+1)|a_{n+m} - a_{n+m+1}|\\
		&\leq\frac{1}{m+1} \sum_{n=0}^{m-1}(n+1)|a_{n+1}|\ + \ \frac{2m}{m+1}\sum_{n=0}^{\infty} |a_{n}|\ + \ \frac{1}{m+1}\sum_{n=0}^{\infty} (n+1)|a_{n} - a_{n+1}|\ +...\\
		&+ \frac{1}{m+1} \sum_{n=0}^{\infty} (n+m+1) |a_{n+m} - a_{n+m+1}|\\
		&\leq \frac{1}{m+1} \sum_{n=0}^{m-1}(n+1)|a_{n+1}| \ +\ \frac{2m}{m+1}\sum_{n=0}^{\infty}|a_{n}|\ + \ \frac{m+1}{m+1} \|a\|_{h} .
	\end{align*}
	Using (8) and (9) we have $ \underset{m}{\sup}\ \frac{1}{m+1} \sum_{n=0}^{\infty} (n+1) \big|a_{n-m} - a_{n+1}\big| < \infty $. Therefore, $ h \subseteq \mathscr{M}(h)$.
\end{proof}

%\subsection{The multiplier $\mathscr{M}(bs_d)$}

 Now we characterize the multiplier of the space $bs_d.$ 
\begin{theorem}  \label{M-bsd-l1}
	Let $ d= (d_{n}) $ be a monotonic increasing, and unbounded sequence of positive real numbers. Then the multiplier $  \mathscr{M}(bs_{d}) $ of $bs_{d}$ contains $  \ell^1 $, i.e., $\mathscr{M}(bs_{d}) \supseteq \ell^1$.
\end{theorem}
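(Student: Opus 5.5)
Fix $a \in \ell^1$ and $x \in bs_d$; the goal is to show $a*x \in bs_d$, with the bound $\|a*x\|_{bs_d} \le \|a\|_{\ell^1}\|x\|_{bs_d}$. Write $S_m(x)=\sum_{k=0}^m x_k$, so that $|S_m(x)|\le d_m\|x\|_{bs_d}$. The first step is to express the partial sums of the convolution through those of $x$. Interchanging the finite sums gives
\begin{align*}
S_m(a*x)=\sum_{n=0}^{m}\sum_{j=0}^{n}a_j x_{n-j}=\sum_{j=0}^{m}a_j\sum_{i=0}^{m-j}x_i=\sum_{j=0}^{m}a_j\,S_{m-j}(x).
\end{align*}
This identity involves only finitely many terms, so there are no convergence issues at this stage.

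The second step estimates each term using the definition of $bs_d$ and the monotonicity of $d$. Since $m-j\le m$ and $d$ is increasing, $|S_{m-j}(x)|\le d_{m-j}\|x\|_{bs_d}\le d_m\|x\|_{bs_d}$. It follows that
\begin{align*}
\frac{1}{d_m}\bigl|S_m(a*x)\bigr|\le \sum_{j=0}^{m}|a_j|\,\frac{d_{m-j}}{d_m}\,\|x\|_{bs_d}\le \|a\|_{\ell^1}\,\|x\|_{bs_d}.
\end{align*}
Taking the supremum over $m$ gives $a*x\in bs_d$ with the stated norm bound. Hence $T_a\in\mathcal{B}(bs_d)$ and $a\in\mathscr{M}(bs_d)$.

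There is no real obstacle here. The only point needing care is that monotonicity of $d$ is exactly what is used to compare $d_{m-j}$ with $d_m$. Unboundedness of $d$ is not needed for the inclusion.

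As an alternative route, one could use the isometry $\psi: bs_d\to\ell^\infty$ from Lemma \ref{bsd-ns} and note that $\psi(a*x)_m=\sum_j a_j\frac{d_{m-j}}{d_m}\psi(x)_{m-j}$. This is a matrix acting on $\ell^\infty$ whose row sums of absolute values are bounded by $\|a\|_{\ell^1}$, so it leads to the same estimate.
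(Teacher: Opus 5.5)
Your proof is correct and is essentially the paper's argument. The paper also rewrites $\sum_{k=0}^{m}(a*x)_k$ as $\sum_{j=0}^{m}a_j\sum_{i=0}^{m-j}x_i$, uses monotonicity of $d$ (via $d_{m-j}/d_m\le 1$), and obtains $\|a*x\|_{bs_d}\le\|a\|_{\ell^1}\|x\|_{bs_d}$; your observation that unboundedness of $d$ is not needed is accurate.
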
	

\begin{proof}
	Let $  a \in \ell^1 $. Want to show that, for all $ x \in bs_{d},\ a*x \in bs_{d} $.
	\begin{align*}
		&\ \underset{m}{\sup} \ \frac{1}{d_{m}} \ \Big|  \sum_{k=0}^{m} (a*x)_{k} \Big| \\
		= &\ \underset{m}{\sup} \ \frac{1}{d_{m}} \ \Big| a_{0} \sum_{k=0}^{m} x_{k} \  + a_{1} \sum_{k=0}^{m-1} x_{k} \ + \cdots + a_{m}x_{0} \Big| \\
		\leq &\ \underset{m}{\sup} \ \Bigg(\big|a_{0}\big|\ \frac{1}{d_{m}} \Big|\sum_{k=0}^{m} x_{k}\Big| + \big|a_{1}\big|\ \frac{1}{d_{m}} \Big|\sum_{k=0}^{m-1} x_{k}\Big| + \cdots + \big|a_{m}\big|\ \frac{1}{d_{m}} \big|x_{0}\big|\Bigg) \\
		= &\ \underset{m}{\sup} \ \Bigg(\big|a_{0}\big|\ \frac{1}{d_{m}} \Big|\sum_{k=0}^{m} x_{k}\Big| + \frac{d_{m-1}}{d_{m}}\big|a_{1}\big|\ \frac{1}{d_{m-1}} \Big|\sum_{k=0}^{m-1} x_{k}\Big| + \cdots + \frac{d_{0}}{d_{m}} \big|a_{m}\big|\ \frac{1}{d_{0}} \big|x_{0}\big|\Bigg) \\
		\leq &\ \underset{m}{\sup} \ \Bigg(\big|a_{0}\big|\ \frac{1}{d_{m}} \Big|\sum_{k=0}^{m} x_{k}\Big| + \big|a_{1}\big|\ \frac{1}{d_{m-1}} \Big|\sum_{k=0}^{m-1} x_{k}\Big| + \cdots + \big|a_{m}\big|\ \frac{1}{d_{0}} \big|x_{0}\big|\Bigg) \\
		\leq &\ \underset{m}{\sup} \Bigg( \big\| x \big\|_{bs_{d}}  \ \sum_{k=0}^{m} \big|a_{k}\big|  \Bigg) \\
		  = &\ \big\| x \big\|_{bs_{d}} \big\| a\big\|_{\ell^{1}}  < \infty.
	\end{align*}
	This proves the desired result.
\end{proof}	

The following example shows that for certain weights $d,$ we have $\mathscr{M}(bs_{d}) \supsetneq \ell^1.$

\begin{example}
	Let $d_{n}= 5^n, \ n \in \mathbb{N}_{0}$ and $a= e= (1,1,1, \cdots)$. Then $a \notin \ell^1$. We will show that $a \in \mathscr{M}(bs_{d})$. Let $x \in bs_{d}$. Then
	\begin{align*}
		\ \big\| a*x \big\|_{bs_{d}} =& \ \underset{n}{\sup}\ \frac{1}{d_{n}} \Big|\sum_{k=0}^{n} (a*x)_{k}\Big| \\
		=& \ \underset{n}{\sup}\ \frac{1}{d_{n}} \Big|(a_{0}x_{0})+(a_{1}x_{0}+a_{0}x_{1})+ \cdots+ (a_{n}x_{0}+a_{n-1}x_{1}+\cdots+a_{0}x_{n})\Big| \\
		= & \ \underset{n}{\sup}\ \frac{1}{d_{n}} \Big|a_{0}\sum_{k=0}^{n}x_{k} \ + \ a_{1}\sum_{k=0}^{n-1}x_{k} \ + \ \cdots \ + a_{n}x_{0}\Big|\\
		= & \ \underset{n}{\sup}\ \frac{1}{5^{n}} \Big|\sum_{k=0}^{n}x_{k} \ + \ \sum_{k=0}^{n-1}x_{k} \ + \ \cdots \ + x_{0}\Big|\\
		\leq & \ \underset{n}{\sup} \Biggl\{\frac{1}{5^n}\Big|\sum_{k=0}^{n}x_{k}\Big|+ \frac{1}{5}\frac{1}{5^{n-1}}\Big|\sum_{k=0}^{n-1}x_{k}\Big|\ + \ \cdots \ + \frac{1}{5^n}1 \Big|x_{0}\Big|\Biggr\}\\
		\leq & \ \underset{n}{\sup} \Biggl\{1+ \frac{1}{5} + \cdots+ \frac{1}{5^n}\Biggr\} \big\|x\big\|_{bs_{d}}\\
		= & \ \frac{5}{4} \ \big\|x\big\|_{bs_{d}}.
	\end{align*}
	As $x \in bs_{d}, \ \|a*x\|_{bs_{d}} < \infty$. This implies $a \in \mathscr{M}(bs_{d})$.
\end{example}

In view of the above discussion, the following theorem establishes a sufficient condition for $\mathscr{M}(bs_{d})=\ell^1.$ For this purpose, let $a \in \ell^1 \subseteq \mathscr{M}(bs_{d})$. Then as usual, the convolution operator $T_a : bs_d \to bs_d$ is defined as
\[T_a(x) = a * x,  \ x \in bs_d.\]

\begin{theorem} \label{M-bd_d}
	Let $d= (d_{n}) $ be a monotonic increasing, and unbounded sequence of positive real numbers such that $\underset{l \to \infty}{\lim} \frac{d_{l}}{d_{n+l}} = c \neq 0$ for all $n \in \mathbb{N}_{0}$, then the following relations hold:
    \begin{enumerate}
        \item[(i)] $\mathscr{M}(bs_{d})=\ell^1,$
        \item[(ii)] If $a \in \ell^1$ then $c \|a\|_{\ell^1} \leq \|T_{a}\| \leq \|a\|_{\ell^1}.$
    \end{enumerate}
	
	%		 Then the convolution operator $T_{a}: \mathbb{C}^{\mathbb{N}_{0}} \to  \mathbb{C}^{\mathbb{N}_{0}}$ is continuous from $ bs_{d} $ to $bs_{d} $ if and only if $a \in l^1$. And in that case $ \big\| T_{a} \big\|_{\mathcal{B}(bs_{d})} = \sum_{n=0}^{\infty}|a_{n}|$.
\end{theorem}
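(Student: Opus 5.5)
The inclusion $\ell^1 \subseteq \mathscr{M}(bs_d)$ and the upper bound $\|T_a\| \leq \|a\|_{\ell^1}$ are already contained in the proof of Theorem~\ref{M-bsd-l1}: the chain of inequalities there gives $\|a*x\|_{bs_d}\leq \|a\|_{\ell^1}\|x\|_{bs_d}$. So what remains is a lower estimate. Given $a\in\mathscr{M}(bs_d)$, I will show $a\in\ell^1$ with $c\sum_{k=0}^{N}|a_k| \leq \|T_a\|$ for every $N$. This yields both $\mathscr{M}(bs_d)\subseteq \ell^1$ and $c\|a\|_{\ell^1}\leq\|T_a\|$. Here $\|T_a\|$ is finite for $a\in\mathscr{M}(bs_d)$ by the closed graph remark (ii) at the start of Section~\ref{sect5}.

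The tool is the isometric isomorphism $\psi: bs_d\to\ell^\infty$ from Lemma~\ref{bsd-ns}. Writing $S_m(x)=\sum_{k=0}^m x_k$, the computation in Theorem~\ref{M-bsd-l1} gives
\[
\frac{1}{d_m}S_m(a*x)=\sum_{j=0}^{m} a_j \frac{d_{m-j}}{d_m}\,\frac{S_{m-j}(x)}{d_{m-j}}.
\]
Fix $N$ and a large $l$, and set $m=N+l$. I choose $y\in\ell^\infty$ with $\|y\|_\infty\le 1$ by putting $y_{m-j}=\overline{\mathrm{sgn}(a_j)}$ for $0\le j\le N$ and $y_i=0$ otherwise. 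Then I take $x=\psi^{-1}(y)$, so that $\|x\|_{bs_d}\le 1$ and $S_i(x)/d_i=y_i$. With this choice the expression above collapses to
\[
\frac{1}{d_m}S_m(a*x)=\sum_{j=0}^{N}|a_j|\frac{d_{N+l-j}}{d_{N+l}}.
\]
Hence $\|T_a\|\ge \sum_{j=0}^{N}|a_j|\, d_{N+l-j}/d_{N+l}$.

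Next I pass to the limit $l\to\infty$. Rewriting $d_{N+l-j}/d_{N+l}=d_{l'}/d_{l'+j}$ with $l'=N+l-j\to\infty$, the hypothesis gives convergence to $c$ for each fixed $j$, interpreted as $c_j$. Strictly, the hypothesis should be read as saying the limit is at least some common $c>0$. Monotonicity already gives $d_{l'}/d_{l'+j}\le 1$ and that $c_j$ is nonincreasing in $j$. So I will use $\liminf_{l'} d_{l'}/d_{l'+j}\ge c$ for $j\le N$ (the statement's $c$). This yields $\|T_a\|\ge c\sum_{j=0}^N|a_j|$, and letting $N\to\infty$ gives $a\in\ell^1$ and $c\|a\|_{\ell^1}\le\|T_a\|$.

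The main obstacle is the interpretation of the hypothesis. If the limit depends on $n$, the clean bound holds only with $c$ replaced by $\inf_n c_n$, and $\mathscr{M}(bs_d)\subseteq\ell^1$ needs that infimum to be positive. In exactly that case $c_n=c_1^n$, since $c_n$ is multiplicative in $n$. For the conclusion $a\in\ell^1$ alone, I would instead use a single fixed $N$ with all $c_j\ge c_N>0$. This gives $\sum_{j\le N}|a_j|\le \|T_a\|/c_N$, but that bound is not uniform in $N$ unless the limits share a common positive lower bound. So I will read the hypothesis as a common constant $c$, as written, which makes the argument go through.
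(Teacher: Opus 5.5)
Your argument is correct and is essentially the paper's. Both evaluate $\frac{1}{d_{l+N}}S_{l+N}(a*x)$ for a test vector whose partial sums are phase-aligned with $a_0,\dots,a_N$ on a window $[l,l+N]$ pushed off to infinity. The paper builds $x$ by hand with partial sums of modulus one, so $\|x\|_{bs_d}=1/d_l$ and every term gets the factor $d_l/d_{l+N}$, whereas your $\psi^{-1}$ construction gives the slightly sharper termwise factors $d_{l+N-j}/d_{l+N}$. On your interpretive worry: taking $n=0$ (or using $c_n=c_1^n$) shows a common limit forces $c=1$, so the statement really says $\|T_a\|=\|a\|_{\ell^1}$. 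The $n$-dependent reading is false outright: for $d_n=5^n$ every $c_n=5^{-n}\neq 0$, yet the paper's own example gives $e\in\mathscr{M}(bs_d)\setminus\ell^1$.
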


\begin{proof}
Since $d= (d_{n}) $ is a monotonic increasing, and unbounded sequence of positive real numbers, $c \in (0, 1].$

(i) \ Form Theorem \ref{M-bsd-l1} it follows that $\mathscr{M}(bs_{d}) \supseteq \ell^1.$ For the converse, let us assume that $a \in \mathscr{M}(bs_{d}).$ This implies $ \big\| T_{a} \big\|_{\mathcal{B}(bs_{d})} < \infty$. Then

	%First, let us assume that $a \in \ell^1$. Then from the proof of Theorem \ref{M-bsd-l1} it follows that $\|T_{a}(x)\|_{bs_{d}} \leq \|x\|_{bs_{d}} \|a\|_{\ell^{1}}$, i.e., $\|T_{a}\|_{\mathcal{B}(bs_{d})} \leq \|a\|_{\ell^{1}}$. For the converse, let us assume that $ \big\| T_{a} \big\|_{\mathcal{B}(bs_{d})} < \infty$. Then
	\begin{align*}
		\infty >  \big\| T_{a} \big\|_{\mathcal{B}(bs_{d})} 
		&= \underset{\|x\| \neq 0}{\sup} \frac{\|a*x\|_{bs_{d}}}{\|x\|_{bs_{d}}} \\
		&= \underset{\|x\| \neq 0}{\sup} \frac{ \underset{m}{\sup} \frac{1}{d_{m}}\Bigl|\sum_{k=0}^{m}(a*x)_{k}\Bigr|} {\underset{m}{\sup} \frac{1}{d_{m}}\Bigl|\sum_{k=0}^{m}x_{k}\Bigr|}.
	\end{align*}
	Let $m$ and $n$ be two fixed non-negative integer and 
    \[x= \Big(0, 0, \cdots, 0, e^{-i \theta_{n}}, e^{-i \theta_{n-1}}- e^{-i \theta_{n}}, \cdots, e^{-i \theta_{0}}-e^{-i \theta_{1}}, 0, 0, \cdots\Big),\] 
    where the first nonzero entry $e^{-i \theta_{n}}$ of $x$ appears in the $m$-th position and $ \theta_{t} = \arg a_{t}; \  0 \leq t \leq n$. Then
	\begin{eqnarray*}
	    x*a &=&\Bigl(0, \cdots,0, a_{0}e^{-i \theta_{n}}, a_{0}\big(e^{-i \theta_{n-1}}-e^{-i \theta_{n}}\big)+a_{1}e^{-i \theta_{n}}, \cdots , a_{0}\bigl(e^{-i \theta_{0}}-e^{-i \theta_{1}}\bigr)+ \cdots \Bigr. \\
        & +& \Bigl. a_{n}e^{-i \theta_{n}},a_{1}\bigl(e^{-i \theta_{0}}-e^{-i \theta_{1}}\bigr)+ \cdots + a_{n+1}e^{-i \theta_{n}}, \cdots\Bigr).
	\end{eqnarray*}
	Then $\|(x*a)\|_{bs_{d}} \geq \frac{1}{d_{m+n}} \sum_{k=0}^{n}|a_{k}|$ and $\|x\|_{bs_{d} } = \frac{1}{d_m}$. So, $ \big\| T_{a} \big\|_{\mathcal{B}(bs_{d})} \geq \frac{d_m}{d_{m+n}} \sum_{k=0}^{n}|a_{k}|$. Letting $m \to \infty$, we have $ \big\| T_{a} \big\|_{\mathcal{B}(bs_{d})} \geq c \cdot \sum_{k=0}^{n}|a_{k}|$ for all $n \in \mathbb{N}_{0}$. Hence, 
    \[ \big\| T_{a} \big\|_{\mathcal{B}(bs_{d})} \geq c \cdot \sum_{k=0}^{\infty}|a_{k}|.\]
    This implies $a \in \ell^1.$

    (ii) If $a \in \ell^1,$ then from the proof of Theorem \ref{M-bsd-l1} it follows that $\|T_{a}(x)\|_{bs_{d}} \leq \|x\|_{bs_{d}} \|a\|_{\ell^{1}}$, i.e., $\|T_{a}\|_{\mathcal{B}(bs_{d})} \leq \|a\|_{\ell^{1}}$. Also, from the above proof we have $\big\| T_{a} \big\|_{\mathcal{B}(bs_{d})} \geq c \cdot \sum_{k=0}^{\infty}|a_{k}|.$ Hence the result follows.
    
   % Therefore, if $\underset{l \to \infty}{\lim} \frac{d_{l}}{d_{n+l}} = c$ for some $c \in(0, 1]$ and for all $n \in \mathbb{N}_{0}$, then $ T_{a}$ is a bounded linear operator on $bs_{d}$ if and only if $a \in \ell^1$. And if $c=1$, then $\|T_{a}\|_{\mathcal{B}(bs_{d})} = \sum_{i=0}^{\infty}|a_{i}|$.	
\end{proof}	

\begin{remark}
    In Theorem \ref{M-bd_d}, if $\underset{l \to \infty}{\lim} \frac{d_{l}}{d_{n+l}} = c =1$ then form the relation $(ii)$ of the above theorem it follows that $\|T_{a}\|_{\mathcal{B}(bs_{d})} = \|a\|_{\ell^1}.$
\end{remark}

\begin{corollary}
	The multiplier of $\sigma_{\infty}$ is $\ell^1$.
\end{corollary}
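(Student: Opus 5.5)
This is a direct specialization of Theorem \ref{M-bd_d}. The space $\sigma_{\infty}$ is exactly $bs_d$ for the weight $d_n = n+1$, with identical norms, so $\mathscr{M}(\sigma_{\infty}) = \mathscr{M}(bs_d)$ for this $d$. It therefore suffices to check that $d_n = n+1$ satisfies the hypotheses of Theorem \ref{M-bd_d} and then read off part (i).

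The hypotheses are quickly verified.

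\begin{enumerate}
\item[(a)] The sequence $d_n = n+1$ is positive, monotonic increasing and unbounded.
\item[(b)] For each fixed $n \in \mathbb{N}_0$,
\[
\lim_{l \to \infty} \frac{d_l}{d_{n+l}} = \lim_{l\to\infty} \frac{l+1}{n+l+1} = 1,
\]
so the limit exists, is independent of $n$, and equals $c = 1 \neq 0$.
\end{enumerate}

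Theorem \ref{M-bd_d}(i) now gives $\mathscr{M}(\sigma_{\infty}) = \mathscr{M}(bs_d) = \ell^1$. Because $c = 1$, the Remark following that theorem additionally yields the norm identity $\|T_a\|_{\mathcal{B}(\sigma_\infty)} = \|a\|_{\ell^1}$ for every $a \in \ell^1$, which could be recorded alongside.

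There is no real obstacle here. The only point needing care is that Theorem \ref{M-bd_d} asks the limit to be the same nonzero constant $c$ for all $n$, and for $d_n = n+1$ this is immediate.
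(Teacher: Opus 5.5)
Your proposal is correct and follows the paper's proof exactly: it identifies $\sigma_\infty$ with $bs_d$ for $d_n=n+1$, notes that $\lim_{l\to\infty} d_l/d_{n+l}=1$, and applies Theorem \ref{M-bd_d}(i). Your extra observation that the Remark then gives $\|T_a\|_{\mathcal{B}(\sigma_\infty)}=\|a\|_{\ell^1}$ for every $a\in\ell^1$ is also correct.
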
	
\begin{proof}
	Here $d_{n}= n+1$ for all $n\in \mathbb{N}_{0}$. So, in this case $c= 1$. Therefore, by Theorem \ref{M-bd_d}, $\mathscr{M}(\sigma_{\infty})= \ell^1$. 
\end{proof}

\section{Spectrum and fine spectrum of $T_a \in \mathcal{B}(h_{d})$} \label{sect6}

We now shift our focus towards the spectrum of $T_{a} \in \mathcal{B}(h_{d})$. For this purpose, we find $\sigma(R,h_{d})$ where $R$ denotes the right shift operator. Let $\mathbb{D}_r$ denote the open disk centered at origin and radius $r$ with $\mathbb{D}_1 =\mathbb{D}$. 
\begin{theorem}
	Let $d= (d_{n})$ be a monotonic increasing, and unbounded sequence of positive real numbers such that $\underset{n \to \infty}{\lim}\frac{d_{n+1}}{d_{n}}=r < \infty$. Then 
    \begin{itemize}
        \item[(i)] $R \in \mathcal{B}(h_{d}),$
        \item[(ii)] $\sigma(R, h_d) = \overline{\mathbb{D}}_{r}.$ 
    \end{itemize}
    
\end{theorem}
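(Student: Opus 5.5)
The plan is to view $R$ as the convolution operator $T_{a}$ with $a=e_{1}$, bound it using Corollary~\ref{bdd-hd}, and get $\sigma(R,h_d)\subseteq\overline{\mathbb{D}}_r$ from the spectral radius formula. For the reverse inclusion I will produce eigenvectors of the adjoint for every $|\lambda|<r$. Throughout, $r\ge 1$ because $d$ is increasing, and $d_n^{1/n}\to r$ because $d_{n+1}/d_n\to r$.

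\emph{Step 1 (boundedness).} For $a=e_1$ the only nonzero terms of $\sum_{n} d_n|a_{n-m}-a_{n+1}|$ occur at $n=0$ and $n=m+1$. Hence Corollary~\ref{bdd-hd} gives
\[
\|R\|_{\mathcal{B}(h_d)}=\sup_m \frac{d_0+d_{m+1}}{d_m}.
\]
This supremum is finite because $d_0/d_m\le 1$ and $d_{m+1}/d_m\to r<\infty$. Also $e_1\to 0$, so (i) holds.

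\emph{Step 2 (spectral radius).} We have $R^k=T_{e_k}$. The same computation, with nonzero terms only at $n=k-1$ and $n=m+k$, gives
\[
\|R^k\|=\sup_m \frac{d_{k-1}+d_{m+k}}{d_m}.
\]
For the lower bound, $\|R^k\|\ge \lim_m d_{m+k}/d_m=r^k$. For the upper bound, fix $\epsilon>0$, choose $N$ with $d_{j+1}/d_j\le r+\epsilon$ for $j\ge N$, and let $M=\sup_j d_{j+1}/d_j$. Writing $d_{m+k}/d_m$ as a product of $k$ consecutive ratios gives $d_{m+k}/d_m\le M^N(r+\epsilon)^{k-N}$. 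Moreover $d_{k-1}/d_m\le d_{k-1}/d_0$, and $(d_{k-1})^{1/k}\to r$. Taking $k$-th roots, $\lim_k\|R^k\|^{1/k}=r$, so $\sigma(R,h_d)\subseteq\overline{\mathbb{D}}_r$. I expect this uniform-in-$m$ control of the ratio products to be the main technical step.

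\emph{Step 3 (reverse inclusion).} Since $h_d$ is $BK$ with $AK$, Lemma~\ref{b-c-dual} and the identification $h_d^*\cong bs_d$ realize $R^*$ on $bs_d$ through the pairing $\sum x_ny_n$. Under this pairing $R^*$ is the left shift $(y_n)\mapsto(y_{n+1})$. For $|\lambda|<r$ put $y=(1,\lambda,\lambda^2,\dots)$, which satisfies $R^*y=\lambda y$. It lies in $bs_d$ by the following cases:
\begin{itemize}
\item if $1<|\lambda|<r$, then $|\sum_{k\le m}\lambda^k|\lesssim|\lambda|^m$ and $|\lambda|^m/d_m\to0$ since $d_m^{1/m}\to r$;
\item if $|\lambda|\le 1<r$, the partial sums are at most $m+1$ and $(m+1)/d_m\to0$;
\item if $r=1$, then $|\lambda|<1$ and the partial sums are bounded by $1/(1-|\lambda|)$.
\end{itemize}
Thus $\lambda\in\sigma_p(R^*)$, so $\overline{\mathcal{R}(R-\lambda I)}\ne h_d$ and $\lambda\in\sigma(R,h_d)$. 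As the spectrum is closed, $\overline{\mathbb{D}}_r\subseteq\sigma(R,h_d)$, and together with Step 2 this proves (ii).
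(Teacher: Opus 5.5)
Your proof is correct, but it takes a different route from the paper's.

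\textbf{What the paper does.} For (i) the paper estimates $\|Rx\|_{h_d}\le(1+\sup_n d_{n+1}/d_n)\|x\|_{h_d}$ directly rather than using the exact norm formula. For (ii) it uses neither the spectral radius nor the adjoint. Assuming $\lambda\in\rho(R,h_d)$, it writes $(R-\lambda I)^{-1}$ as the lower-triangular Toeplitz matrix generated by $b_n=-\lambda^{-(n+1)}$. It then decides boundedness of this convolution operator with Corollary~\ref{bdd-hd}. The requirement $b_n\to 0$ forces $|\lambda|>1$. The quantity $\sup_m \frac{1}{d_m}\sum_n d_{n+m}|\lambda|^{-n}$ then shows the inverse is bounded for $|\lambda|>r$ and unbounded for $|\lambda|<r$. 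Closedness of the spectrum settles $|\lambda|=r$.

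\textbf{What each route buys.} The paper's route tests each $\lambda$ against the unique candidate inverse and gives an explicit resolvent norm. However, its ratio-test step is rigorous only once you have a bound on $d_{n+m}/d_m$ that is uniform in $m$, which is exactly the product-of-ratios estimate in your Step 2. The point $\lambda=0$, where the matrix $B$ is undefined, also needs a separate remark there. Your split uses Gelfand's formula on the exact norms $\|R^k\|=\sup_m (d_{k-1}+d_{m+k})/d_m$ for $\sigma(R,h_d)\subseteq\overline{\mathbb{D}}_r$. It then uses eigenvectors $(\lambda^n)\in bs_d$ of the left shift $R^*$ for $\mathbb{D}_r\subseteq\sigma(R,h_d)$. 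This avoids inverting $R-\lambda I$ and handles $\lambda=0$ automatically. It also gives more: since $R$ is injective, it shows $\mathbb{D}_r\subseteq\sigma_r(R,h_d)$. That is the same eigenvector device the paper later uses in Theorem~\ref{rs-Ta-hd} for general $T_a$.
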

\begin{proof}
	Since $d=(d_{n})$ is a monotonic increasing, and unbounded sequence of positive real numbers and $\underset{n \to \infty}{\lim}\frac{d_{n+1}}{d_{n}}=r < \infty$, it follows, $r\geq1$, and 
   $\underset{n}{\sup}\frac{d_{n+1}}{d_{n}}< \infty.$ Let $\underset{n}{\sup}\frac{d_{n+1}}{d_{n}}=c$. Then, 
	\begin{align*}
		\big\|R(x)\big\|_{h_{d}} 
		&= \big\|\big(0, x_{0}, x_{1}, x_{2}, \cdots \big)\big\|_{h_{d}} \\
		&= \sum_{n=0}^{\infty}d_{n}\big|x_{n-1} - x_{n}\big|, \ \text{where}\ x_{-1}=0\\
		&= d_{0}\big|x_{0}\big| + \sum_{n=1}^{\infty} d_{n} \big|x_{n-1} - x_{n}\big|\\
		&= d_{0}\big|x_{0}\big| + \sum_{n=0}^{\infty} d_{n+1} \big|x_{n} - x_{n+1}\big|\\
		& \leq d_{0}\big|x_{0}\big| +  \sum_{n=0}^{\infty} c \cdot d_{n} \big|x_{n} - x_{n+1}\big|\\
		&\leq \|x\|_{h_{d}} + c \cdot  \|x\|_{h_{d}}\\
		&= (1+c)  \|x\|_{h_{d}}.
	\end{align*}
	Hence,  $\|R\|_{\mathcal{B}(h_{d})} \leq 1+c$, and consequently $R \in \mathcal{B}(h_{d})$. 
    
    For the second part, let $ \lambda \in \mathbb{C}$ such that $ (R - \lambda I)$ is invertible, i.e., $\lambda \in \rho(R,h_{d})$. Then the inverse $ (R - \lambda T)^{-1} $ has the following matrix representation.

	\begin{align*}
		B =
		\begin{bmatrix}
			-\frac{1}{\lambda} & 0 & 0 & 0 & \cdots \\
			-\frac{1}{\lambda^2} & -\frac{1}{\lambda} & 0 & 0 & \cdots \\
			-\frac{1}{\lambda^3} & -\frac{1}{\lambda^2} & -\frac{1}{\lambda} & 0 & \cdots \\
			\vdots & \vdots & \vdots &\vdots & \ddots 
		\end{bmatrix} .
	\end{align*}

	It is obvious from the matrix representation that a necessary condition for $(R-\lambda I)^{-1}$ to be bounded on $h_{d}$ is $|\lambda|> 1$ [(2), Corollary \ref{bdd-hd}]. Therefore, $\sum_{n=0}^{\infty} \frac{1}{|\lambda|^n} < \infty$. Let $(b_{n}) = (-\frac{1}{\lambda^{n+1}})$. Now we have
	\begin{align*}
		& \ \big\|(R- \lambda I)^{-1}\|_{\mathcal{B}(h_{d})} \\
		= &\ \underset{m}{\sup} \frac{1}{d_{m}} \sum_{n=0}^{\infty} d_{n} \big|b_{n-m} - b_{n+1}\big|\\
		= &\ \underset{m}{\sup} \Biggl\{\frac{1}{d_{m}} \sum_{n=0}^{m-1}d_{n} \big|b_{n+1}\big| \ + \ \frac{1}{d_{m}} \sum_{n=m}^{\infty} d_{n}\big|b_{n-m}- b_{n+1}\big|\Biggr\}\\
		= &\ \underset{m}{\sup} \Biggl\{\frac{1}{d_{m}} \sum_{n=0}^{m-1}d_{n} \big|b_{n+1}\big| \ + \ \frac{1}{d_{m}} \sum_{n=0}^{\infty} d_{n+m}\big|b_{n}- b_{n+m+1}\big|\Biggr\}\\
		= &\ \underset{m}{\sup} \Biggl\{\frac{1}{d_{m}} \sum_{n=0}^{m-1}d_{n} \Bigg|\frac{1}{\lambda^{n+2}}\Bigg| \ + \ \frac{1}{d_{m}} \sum_{n=0}^{\infty} d_{n+m}\Bigg|\frac{1}{\lambda^{n+1}}-\frac{1}{\lambda^{n+m+2}}\Bigg|\Biggr\}\\
		= &\ \underset{m}{\sup} \Biggl\{\frac{1}{|\lambda|^2}\sum_{n=0}^{m-1}\frac{d_{n}}{d_{m}}\frac{1}{|\lambda|^n} \ + \ \sum_{n=0}^{\infty} \frac{d_{n+m}}{d_{m}} \frac{1}{|\lambda|^n} \frac{1}{|\lambda|} \Bigg|1 - \frac{1}{\lambda^{m+1}}\Bigg|\Biggr\}\\
		%				\leq &\ \underset{m}{\sup}\Biggl\{\frac{1}{|\lambda|^2} \sum_{n=0}^{m-1} \frac{d_{n}}{d_{m}}\frac{1}{|\lambda|^n}\Biggr\} \ + \ \underset{m}{\sup}\Biggl\{\frac{1}{|\lambda|} \sum_{n=0}^{\infty} \frac{d_{n+m}}{d_{m}} \frac{1}{|\lambda|^n}\Biggr\}\  \underset{m}{\sup}\Biggl\{\Bigg|1 - \frac{1}{\lambda^{m+1}}\Bigg|\Biggr\}\\
		%				\leq &\ \underset{m}{\sup} \Biggl\{\frac{1}{|\lambda|^2} \sum_{n=0}^{m-1} \frac{1}{|\lambda|^n}\Biggr\} \ + \ \underset{m}{\sup}\Biggl\{\frac{1}{|\lambda|} \sum_{n=0}^{\infty} \frac{d_{n+m}}{d_{m}} \frac{1}{|\lambda|^n}\Biggr\}\  \underset{m}{\sup}\Biggl\{\Bigg|1 - \frac{1}{\lambda^{m+1}}\Bigg|\Biggr\}\\
		%				= &\ \frac{1}{|\lambda|^2} \sum_{n=0}^{\infty} \frac{1}{|\lambda|^n} \ + \ 2 \ \frac{1}{|\lambda|}\underset{m}{\sup} \sum_{n=0}^{\infty} \frac{d_{n+m}}{d_{m}} \frac{1}{|\lambda|^n}
		= &\ \underset{m}{\sup} \Biggl\{\frac{1}{|\lambda|^2} \sum_{n=0}^{m-1} \frac{d_{n}}{d_{m}} \frac{1}{|\lambda|^n} \ + \ \frac{1}{d_{m}} \frac{1}{|\lambda|} \bigg|1 - \frac{1}{\lambda^{m+1}}\bigg| \sum_{n=0}^{\infty} \frac{d_{n+m}}{|\lambda|^n}\Biggr\},
	\end{align*}
	which is finite if $\underset{n \to \infty }{\lim} \frac{d_{n+m+1}}{d_{n+m}} < |\lambda| $, i.e., $r < |\lambda|$ and infinite if $\underset{n \to \infty }{\lim} \frac{d_{n+m+1}}{d_{n+m}} > |\lambda| $, i.e., $r > |\lambda|$. Therefore, if $|\lambda|>r$, then  $(R- \lambda I)^{-1}$ is a bounded linear operator on $h_{d}$, i.e., if $\lambda \in \overline{\mathbb{D}}_{r}^c$, then $\lambda \in \rho(R)$, i.e., if $\lambda \in \sigma(R, h_{d})$, then $\lambda \in \overline{\mathbb{D}}_{r}$, which implies $\sigma(R, h_{d}) \subseteq \overline{\mathbb{D}}_{r}$. Similarly if $r > |\lambda|$, then $(R-\lambda I)^{-1}$ is not bounded, i.e., $\mathbb{D}_{r} \subseteq \sigma(R,h_{d})$. As spectrum is a closed set, so $\overline{\mathbb{D}}_{r} \subseteq \sigma(R, h_{d})$. Therefore, $\sigma(R, h_{d})= \overline{\mathbb{D}}_{r}$.
\end{proof}

Let $\Phi_{a}(z)$ denote the power series $\sum_{n=0}^{\infty} a_{n}z^{n}$ for some $z \in \mathbb{C}$. The following result is crucial to compute the spectrum of $T_a.$

\begin{theorem} \label{T_a-phi-R}
	Let $T_{a} \in \mathcal{B}(h_{d})$. Then $T_{a}= \Phi_{a}(R)= \sum_{n=0}^{\infty}a_{n}R^n$, where $R$ is the right shift operator on $h_{d}$.
	% and $\Phi_{a}(z)=\sum_{n=0}^{\infty}a_{n}z^n,\ z \in \mathbb{C}$.
\end{theorem}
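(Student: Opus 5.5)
The plan is to reduce the identity to the action on the basis vector $e_0$ and then extend by continuity, using that $h_d$ is a $BK$ space with $AK$ and that both operators commute with $R$. First I fix what $\Phi_a(R)$ means. For $N\in\mathbb{N}_0$ let $S_N=\sum_{n=0}^{N}a_nR^n\in\mathcal{B}(h_d)$. I define $\Phi_a(R)$ as the limit of $(S_N)$, in operator norm where the series converges absolutely, and otherwise in the strong operator topology. In the situation of the previous theorem, where $\Phi_a$ is analytic on a neighbourhood of $\sigma(R,h_d)=\overline{\mathbb{D}}_r$, this limit is also the Riesz functional calculus. The statement then amounts to two facts: $S_Nx\to T_ax$ in $h_d$ for every $x\in h_d$, and the limit is independent of the mode of convergence used.

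The algebraic core is a direct computation on the standard basis. Since $R^ne_k=e_{k+n}$, we get
\[
S_Ne_k=\sum_{n=0}^{N}a_ne_{k+n}\longrightarrow\sum_{n=0}^{\infty}a_ne_{k+n}=R^k a=a*e_k=T_ae_k .
\]
This convergence holds in $h_d$, because $\big\|R^k\big(a-a^{[N]}\big)\big\|_{h_d}\le\|R\|^k\|a-a^{[N]}\|_{h_d}\to0$. Here I use Lemma \ref{MHd}, which gives $a=T_ae_0\in h_d$, the $AK$ property of $h_d$, and the boundedness of $R$ from the previous theorem. By linearity, $S_Nx\to T_ax$ for every $x\in c_{00}$.

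To pass from $c_{00}$ to all of $h_d$, I would use a commutation argument rather than estimating $\|S_N\|$ directly. Every bounded limit $L$ of $(S_N)$ commutes with $R$ and satisfies $Le_0=a$. Likewise $T_a$ is bounded by Corollary \ref{bdd-hd}, commutes with $R$ (convolution is associative and $R=T_{e_1}$), and satisfies $T_ae_0=a$. Now take any $L\in\mathcal{B}(h_d)$ commuting with $R$. For $x\in h_d$ we have $x^{[k]}=\sum_{j\le k}x_jR^je_0\to x$ by $AK$. Hence
\[
Lx=\lim_k\sum_{j\le k}x_jR^jLe_0=\lim_k\sum_{j\le k}x_jR^ja .
\]
So $L$ is uniquely determined by $Le_0$, and therefore $\Phi_a(R)=T_a$.

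The main obstacle is showing that $(S_N)$ actually has a bounded limit, that is, that $\sup_N\|S_N\|_{\mathcal{B}(h_d)}<\infty$. Once that holds, the density of $c_{00}$ together with the convergence on $c_{00}$ above gives strong convergence to $T_a$. My first attempt would apply Corollary \ref{bdd-hd} to the truncated symbol $a^{[N]}$. This gives $\|S_N\|=\sup_m\frac1{d_m}\sum_n d_n|a^{[N]}_{n-m}-a^{[N]}_{n+1}|$, which I would compare term by term with the corresponding expression for $a$. The only new contributions come from the cut-off at index $N$, and they should be of the form $\frac{d_{m+N}}{d_m}|a_N|$ together with tail sums. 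I expect these to be controlled by $\|T_a\|$ and by $d_N|a_N|\to0$, since $a\in h_d\subseteq c_0(d)$, when combined with the growth hypothesis $d_{n+1}/d_n\to r$.

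If this uniform bound fails in general, the fallback is the analytic case. When $\Phi_a$ converges on a disc of radius larger than $r(R)=r$, Gelfand's formula gives $\sum|a_n|\|R^n\|<\infty$. The series then converges in norm, and the commutation argument applies directly.
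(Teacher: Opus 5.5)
Your reduction is sound as far as it goes. Convergence $S_Ne_k\to T_ae_k$ on $c_{00}$, together with a uniform bound on $\|S_N\|_{\mathcal{B}(h_d)}$, gives strong convergence to $T_a$ by density, and the commutation argument is then not even needed. The gap is that this uniform bound is the whole content of the theorem, you leave it open, and the route you sketch for it does not work.

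Write $\Lambda_m(c)=\frac{1}{d_m}\sum_{n}d_n|c_{n-m}-c_{n+1}|$, so that $\|T_c\|_{\mathcal{B}(h_d)}=\sup_m\Lambda_m(c)$ by Corollary \ref{bdd-hd}. Split it as $\Lambda_m(c)=\frac1{d_m}\sum_{k<m}d_k|c_{k+1}|+\frac1{d_m}\sum_{j\ge0}d_{j+m}|c_j-c_{j+m+1}|$.

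Truncating to $a^{[N]}$ does not produce a single boundary term $\frac{d_{m+N}}{d_m}|a_N|$. For every $j$ with $\max(0,N-m)\le j\le N$ it replaces $|a_j-a_{j+m+1}|$ by $|a_j|$. This creates a window $\frac1{d_m}\sum_{j=\max(0,N-m)}^{N}d_{j+m}|a_j|$ of up to $m+1$ terms. Using only $d_j|a_j|\le\varepsilon$ gives a bound of at least $\varepsilon(m+1)/d_m$, which is not uniformly bounded for sublinear weights such as $d_n=\sqrt{n+1}$. The ratio hypothesis $d_{n+1}/d_n\to r$, which is not part of the statement anyway, does not repair this.

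Your analytic fallback is correct. However, it proves the theorem only under the extra assumption that $\Phi_a$ is analytic near $\overline{\mathbb{D}}_r$. That is what Corollary \ref{s-Ta-hd} needs, but it is not what the statement claims.

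The missing idea is to use the boundedness of $T_a$ on the window by telescoping along the progression of step $m+1$. Since $a\in c_0$ and $d$ is increasing,
\[
d_{j+m}|a_j|\le\sum_{t\ge0}d_{i_t+m}\,|a_{i_t}-a_{i_t+m+1}|,\qquad i_t=j+t(m+1).
\]
The window consists of at most $m+1$ consecutive integers. So the indices $i_t$ arising from different pairs $(j,t)$ are distinct, and all lie in $[\max(0,N-m),\infty)$. That range is disjoint from the indices $j<N-m$ at which $a^{[N]}$ keeps the differences of $a$. Hence $\Lambda_m(a^{[N]})\le\Lambda_m(a)$ for all $m,N$, i.e.\ $\|S_N\|_{\mathcal{B}(h_d)}\le\|T_a\|_{\mathcal{B}(h_d)}$. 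Your argument then yields $S_Nx\to T_ax$ for every $x\in h_d$.

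This is strong convergence, not norm convergence. The paper instead asserts $\|T_a-\Phi_a^{(n)}(R)\|_{\mathcal{B}(h_d)}\to0$, via a contradiction argument that chooses $M$ independently of $n$. That quantifier swap is unjustified, and norm convergence can in fact fail. Take $d_n=\sqrt{n+1}$ and let $a$ equal $16^{-\ell}$ on $[64^\ell,64^\ell+4^\ell)$ for $\ell\ge1$ and $0$ elsewhere (add $e_0+e_1$ if you insist on $a_0,a_1\neq0$). One checks that $\sup_m\Lambda_m(a)<\infty$, while $\Lambda_{4^\ell-1}\big(a-a^{[64^\ell-1]}\big)>1$ for every $\ell$.

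So your strong-operator formulation is the right one for the general statement, provided you supply the bound above. Operator-norm convergence holds only in your analytic fallback case.
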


\begin{proof}
	Let $\Phi_{a}^{(n)}(R)=a_{0}I+a_{1}R+\cdots +a_{n}R^{n}$, where $R^k$ denotes $k$-times composition of right shift operator. Then the matrix representation of the operator $T_{a} - \Phi_{a}^{(n)}(R)$ with respect to standard ordered basis $(e_{n})$ is 
	
	\begin{align*} B=
		\begin{bmatrix}
			0 & 0 & 0 & 0 & \cdots\\
			\vdots & \vdots & \vdots & \vdots &\cdots \\
			0&   0  &   0   &   0   &  \cdots \\
			a_{n+1} & 0 & 0 & 0 & \cdots\\
			a_{n+2} & a_{n+1} & 0 & 0 & \cdots\\
			a_{n+3} & a_{n+2} & a_{n+1} & 0 & \cdots\\
			\vdots & \vdots & \vdots & \vdots & \ddots
		\end{bmatrix}.
	\end{align*} 
	The norm of the operator $T_{a} - \Phi_{a}^{(n)}(R)$ is
	\[ \|T_{a} - \Phi_{a}^{(n)}(R)\|_{\mathcal{B}(h_{d})} = \underset{m}{\sup} \frac{1}{d_{m}}\sum_{k=n}^{\infty} d_{k} \big|a_{k-m} - a_{k+1}\big|,\]
	where $a_{t}=0$ for all $t\leq n$. We claim that, $ \underset{m}{\sup} \frac{1}{d_{m}}\sum_{k=n}^{\infty} d_{k} \big|a_{k-m} - a_{k+1}\big| \to 0$ as $n \to \infty$. If possible let, $ \underset{m}{\sup} \frac{1}{d_{m}}\sum_{k=n}^{\infty} d_{k} \big|a_{k-m} - a_{k+1}\big| \not\to 0$ as $n \to \infty$. Then there exists a $c>0$ and a $M \in \mathbb{N}_{0}$ such that $\frac{1}{d_{M}}\sum_{k=n}^{\infty} d_{k} \big|a_{k-M} - a_{k+1}\big| > c$ for all $n \in \mathbb{N}_{0}$, which contradicts the boundedness of $T_{a}$. Therefore $\Phi_{a}^{(n)}(R)$ converges to $T_{a}$ as $n \to \infty$.
\end{proof}

\begin{corollary} \label{s-Ta-hd}
    Let $T_{a}$ be a bounded linear operator on $h_{d}$, where $d= (d_{n})$ is a monotonic increasing, and unbounded sequence of positive real numbers such that $\underset{n \to \infty}{\lim} \frac{d_{n+1}}{d_{n}} =r$ and $\Phi_{a}(z)$ is analytic on some neighbourhood containing $\overline{\mathbb{D}}_{r}$. Then $\sigma(T_{a}, h_{d})= \Phi_{a}(\overline{\mathbb{D}}_{r})$.
\end{corollary}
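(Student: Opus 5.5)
The plan is to combine Theorem \ref{T_a-phi-R} with the spectral mapping theorem (Theorem \ref{spt-th}) applied to the right shift $R$. First we note that by Corollary \ref{bdd-hd}, $a_n\to 0$. We also recall that the hypothesis $\lim d_{n+1}/d_n=r<\infty$ is exactly the assumption of the preceding theorem on the shift. That theorem gives $R\in\mathcal{B}(h_d)$ and $\sigma(R,h_d)=\overline{\mathbb{D}}_r$. Since $\Phi_a$ is assumed analytic on an open neighbourhood $U$ of $\overline{\mathbb{D}}_r$, we have $\Phi_a\in\mathcal{F}(R)$. Hence Theorem \ref{spt-th} yields
\[
\sigma(\Phi_a(R),h_d)=\Phi_a(\sigma(R,h_d))=\Phi_a(\overline{\mathbb{D}}_r),
\]
where $\Phi_a(R)$ denotes the operator defined by the Riesz--Dunford holomorphic functional calculus.

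The remaining step is to identify this functional-calculus operator $\Phi_a(R)$ with $T_a$. Theorem \ref{T_a-phi-R} shows that $T_a$ is the norm limit of the partial sums $\sum_{k=0}^n a_kR^k$. Because $U$ is open and contains the compact set $\overline{\mathbb{D}}_r$, it contains some closed disk $\overline{\mathbb{D}}_{\rho}$ with $\rho>r$. The power series of $\Phi_a$ centred at $0$ therefore has radius of convergence strictly larger than $r$. Its partial sums converge to $\Phi_a$ uniformly on the circle $|z|=\rho'$ for any $r<\rho'<\rho$. Using the Cauchy integral representation
\[
\Phi_a(R)=\frac{1}{2\pi i}\oint_{|z|=\rho'}\Phi_a(z)(zI-R)^{-1}\,dz,
\]
together with the fact that the functional calculus of a polynomial is the polynomial in $R$, uniform convergence on the contour gives $\sum_{k=0}^n a_kR^k\to\Phi_a(R)$ in operator norm. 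The norm limit is unique, so $\Phi_a(R)=T_a$, and the conclusion $\sigma(T_a,h_d)=\Phi_a(\overline{\mathbb{D}}_r)$ follows.

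I expect the main obstacle to be this identification step: making sure that the series limit of Theorem \ref{T_a-phi-R} and the Dunford-calculus $\Phi_a(R)$ coincide. The key inputs are that $\Phi_a$ is analytic on a disk strictly larger than $\sigma(R,h_d)$ (by compactness of $\overline{\mathbb{D}}_r$) and the continuity of the functional calculus under uniform convergence on a contour. An alternative route avoids Theorem \ref{T_a-phi-R} entirely. Since $r(R)=r<\rho'$, Gelfand's formula gives $\|R^k\|\le C\rho''^k$ for some $r<\rho''<\rho'$, so $\sum a_kR^k$ converges absolutely in norm. It then agrees with $T_a$ entrywise on each basis vector $e_j$, and therefore everywhere by the $AK$ property of $h_d$.
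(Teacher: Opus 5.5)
Your proposal is correct and follows the paper's own argument, namely $\sigma(R,h_d)=\overline{\mathbb{D}}_r$ together with Theorem~\ref{T_a-phi-R} and the spectral mapping theorem; in addition, you show via the Cauchy integral and uniform convergence of the partial sums on $|z|=\rho'$ that the norm-convergent series $\sum a_kR^k$ coincides with the Riesz--Dunford operator $\Phi_a(R)$, which the paper uses without comment. Your alternative route, where Gelfand's formula gives absolute norm convergence and agreement with $T_a$ on each $e_j$ extends to all of $h_d$ by the $AK$ property, is also sound, and it has the advantage of not depending on Theorem~\ref{T_a-phi-R} at all.
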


\begin{proof}
	As $\Phi_{a}(z)$ is analytic on some neighbourhood containing $\overline{\mathbb{D}}_{r}$, our result follows from Theorem \ref{T_a-phi-R} and the spectral mapping theorem.
\end{proof}

In particular, if $d_n = n+1,$ then $h_d = h.$ Also $T_a \in \mathcal{B}(h) \Leftrightarrow a \in h.$ In this case, $\sigma(R,h) = \overline{\mathbb{D}}$. Let $\mathcal{H}(\overline{\mathbb{D}})$ denote the algebra of complex valued functions which are holomorphic in some neighbourhood containing $\overline{\mathbb{D}}.$ Also let $f_a$ indicate that the sequence $a = (a_n)$ be the coefficient of the power series of $f_a.$ Consider the set 
\[\mathcal{S} = \{a = (a_n) : f_a \in \mathcal{H}(\overline{\mathbb{D}})\}.\]

\begin{corollary}
    Let $a \in \mathcal{S}$ and $\Phi_{a}$ denote the power series of $f_a$. Then $\sigma(T_{a}, h)= \Phi_{a}(\overline{\mathbb{D}}).$
\end{corollary}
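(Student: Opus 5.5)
This is the special case $d_n=n+1$ of Corollary~\ref{s-Ta-hd}, so the plan is to verify its hypotheses and invoke it. For $d_n=n+1$ the weight is positive, monotonic increasing and unbounded, and
\[
\lim_{n\to\infty}\frac{d_{n+1}}{d_n}=\lim_{n\to\infty}\frac{n+2}{n+1}=1,
\]
so $r=1$. The theorem on the right shift then gives $R\in\mathcal{B}(h)$ with $\sigma(R,h)=\overline{\mathbb{D}}_1=\overline{\mathbb{D}}$.

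Next I must check that $T_a\in\mathcal{B}(h)$ for $a\in\mathcal{S}$. By Theorem~\ref{m_h} and the closed graph remark at the start of Section~\ref{sect5}, this is equivalent to $a\in h$. Since $f_a$ is holomorphic on a neighbourhood of $\overline{\mathbb{D}}$, compactness of $\overline{\mathbb{D}}$ gives a radius $\rho>1$ such that the power series $\sum a_n z^n$ converges on $\mathbb{D}_\rho$. Hence $|a_n|\le C q^{-n}$ for some $1<q<\rho$ and some constant $C$. Then $a_n\to 0$, and
\[
\sum_n (n+1)|a_n-a_{n+1}|\le C'\sum_n (n+1)q^{-n}<\infty,
\]
so $a\in h$ and $T_a\in\mathcal{B}(h)$.

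With $T_a$ bounded, Theorem~\ref{T_a-phi-R} gives $T_a=\Phi_a(R)=\sum a_nR^n$, with convergence in operator norm. Moreover $\Phi_a$ agrees with $f_a$ and is analytic on a neighbourhood of $\overline{\mathbb{D}}=\sigma(R,h)$. Corollary~\ref{s-Ta-hd} (equivalently, the spectral mapping theorem, Theorem~\ref{spt-th}) then yields $\sigma(T_a,h)=\Phi_a(\overline{\mathbb{D}})$.

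The only delicate point is identifying $\Phi_a(R)$ as defined by the norm-convergent series with the holomorphic functional calculus $f_a(R)$ used in the spectral mapping theorem. I would handle it as follows. By Gelfand's formula the spectral radius of $R$ is $1<q$, so $\|R^n\|\le Mq'^n$ for some $1<q'<q$. Hence $\sum a_nR^n$ converges absolutely in norm. On the other hand, the Riesz integral over the circle $|z|=q'$ can be expanded term by term, since the series for $f_a$ converges uniformly there. Both constructions therefore give the same operator.
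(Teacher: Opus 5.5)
Your proposal is correct and follows essentially the same route as the paper: Taylor coefficients of $f_a$ decay like $q^{-n}$ with $q>1$, so $a\in h=\mathscr{M}(h)$ and $T_a\in\mathcal{B}(h)$; then $T_a=\Phi_a(R)$ by Theorem~\ref{T_a-phi-R}, and the spectral mapping theorem with $\sigma(R,h)=\overline{\mathbb{D}}$ gives $\sigma(T_a,h)=\Phi_a(\overline{\mathbb{D}})$. Your final paragraph identifies the norm-convergent series $\sum a_nR^n$ with the Riesz functional calculus $f_a(R)$, using Gelfand's formula and term-by-term integration over $|z|=q'$. This justifies a step the paper leaves implicit.
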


\begin{proof}
    Clearly, the radius of convergence $\eta_{f_a}$ of $f_a$ satisfies $\eta_{f_a}>1.$ Therefore, its power series coefficients satisfy $|a_n| \leq \frac{C}{\eta_{f_a}^n}$ for all $n \in \mathbb{N}_0$ and some $C > 0.$ This shows that $a \in h$ and consequently $T_a \in \mathcal{B}(h).$ From Theorem \ref{T_a-phi-R}, we have $T_a = \Phi_{a}(R)$ where $\Phi_a$ denotes the power series of $f_a.$ Finally, the result follows from the spectral mapping theorem. 
\end{proof}

\begin{proposition} \label{pts-Ta-hd}
    Let $ a \in \mathscr{M}(h_{d})$. The point spectrum  of $T_{a}$ is empty, i.e. $\sigma_{p}(T_{a}, h_{d}) = \phi $.
\end{proposition}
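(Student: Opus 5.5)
We will prove the statement directly from the lower-triangular structure of $[T_a]$, without using any analytic information about $\Phi_a$. Fix $\lambda \in \mathbb{C}$ and suppose $x = (x_n) \in h_d$ satisfies $(T_a - \lambda I)x = 0$. Coordinatewise this says that for every $n \in \mathbb{N}_0$,
\[
(a_0 - \lambda)x_n + \sum_{k=0}^{n-1} a_{n-k} x_k = 0 .
\]
We aim to show that $x = 0$, splitting into the cases $\lambda \neq a_0$ and $\lambda = a_0$.

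\textbf{Case $\lambda \neq a_0$.} The matrix $[T_a - \lambda I]$ is lower triangular with nonzero diagonal entries $a_0-\lambda$. Forward substitution then gives the following by induction on $n$. For $n=0$ we get $(a_0-\lambda)x_0 = 0$, so $x_0=0$. If $x_0=\cdots=x_{n-1}=0$, the $n$-th equation reduces to $(a_0-\lambda)x_n=0$, so $x_n=0$. Hence $x=0$.

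\textbf{Case $\lambda = a_0$.} The equations become $\sum_{k=0}^{n-1} a_{n-k}x_k = 0$ for all $n\ge 1$, with no constraint from $n=0$. Here we use the standing assumption from the introduction that $a_1 \neq 0$. The $n=1$ equation reads $a_1x_0=0$, so $x_0=0$. Inductively, if $x_0=\cdots=x_{m-1}=0$, the equation with $n=m+1$ reduces to $a_1x_m=0$, so $x_m=0$. Thus again $x=0$.

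In both cases $\mathcal{N}(T_a-\lambda I)=\{0\}$, so no $\lambda$ is an eigenvalue and $\sigma_p(T_a,h_d)=\emptyset$. The hypothesis $a\in\mathscr{M}(h_d)$ is only needed so that $T_a\in\mathcal{B}(h_d)$ is well defined, via the closed graph remark at the start of Section~\ref{sect5}.

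The only delicate point is the case $\lambda=a_0$. Injectivity there genuinely depends on the assumption $a_1\neq 0$. More generally, it is enough that $a$ is not of the form $a_0e_0$, by using the first nonzero $a_j$ with $j\ge1$ in place of $a_1$. We will make explicit that this standing hypothesis is being invoked. The argument is purely algebraic: it holds in all of $\mathbb{C}^{\mathbb{N}_0}$, so the norm of $h_d$ plays no role.
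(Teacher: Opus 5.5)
Your proof is correct and follows the paper's own argument: the same split into $\lambda \neq a_0$, where forward substitution kills every $x_n$, and $\lambda = a_0$. In the second case the paper takes the first nonzero coordinate $x_k$ and deduces $a_n = 0$ for all $n \geq 1$, contradicting the standing assumption $a_1 \neq 0$; this is the same triangular argument as your induction, run in the other direction. You are also right to flag that this standing hypothesis is essential, since for $a = a_0 e_0$ one has $T_a = a_0 I$ and $\sigma_p(T_a, h_d) = \{a_0\}$.
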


\begin{proof}
	Let $x\in h_{d}$. Let $\lambda \in \mathbb{C}$ such that $T_{a}(x)=\lambda x $, for some $x(\neq0)\in h_{d}$. Then we have the following system of equations
	\begin{align*}
		a_{0}x_{0}&=\lambda x_{0}&\\
		a_{1}x_{0}+a_{0}x_{1}&=\lambda x_{1}&\\
		a_{2}x_{0}+a_{1}x_{1}+a_{0}x_{2}&=\lambda x_{3}&\\
		\ \ \ \ \ &\ \ \vdots \  \ \ &
	\end{align*}
	Consider the following cases.

	\textbf{Case 1\ $(a_{0}\neq \lambda)$:} In this case $x_{n}=0, \forall n \in\mathbb{N}_{0}$.	Therefore, $\lambda$ is not an eigenvalue of $T_{a}$.

	\textbf{Case 2\ $(a_{0}=\lambda)$:} Let $k$ be the smallest index for which $x_{k}\neq 0$. Then we have
	\begin{align*}
		a_{0}x_{k}&= \lambda x_{k}\\
		a_{1}x_{k}+ a_{0}x_{k+1} &= \lambda x_{k+1}\\
		a_{2}x_{k}+ a_{1}x_{k+1}+ a_{0}x_{k+2} &= \lambda x_{k+2}\\
		\ \  \ &\ \  \vdots 
	\end{align*}
	This gives $a_{n} = 0$ for all $n \in \mathbb{N}$, which is not true. Therefore, $\sigma_{p}(T_{a}, h_{d}) = \phi$.
\end{proof}	

In the following result we find the point spectrum $\sigma_p(T_a^*, h_d^*)$ as well as the residual spectrum $\sigma_r(T_a, h_d).$
\begin{theorem} \label{rs-Ta-hd}
	Let $T_{a}$ be a bounded linear operator on $h_{d}$, where $d= (d_{n})$ is a monotonic increasing, and unbounded sequence of positive real numbers such that $\underset{n \to \infty}{\lim} \frac{d_{n+1}}{d_{n}} =r$ and $\Phi_{a}(z)$ is analytic on some neighbourhood containing $\overline{\mathbb{D}}_{r}$. Then we have the following results. \begin{itemize}
		\item[(i)] If $r=1$, then 
		\[ \sigma_{r}(T_{a}, h_{d}) = \sigma_p(T_a^*, h_d^*) =
		\begin{cases}
			\Phi_{a}(\overline{\mathbb{D}}), \ & \mathrm{if}\ \underset{n}{\sup} \frac{n+1}{d_{n}} < \infty, \\
			\Phi_{a}(\overline{\mathbb{D}}\setminus \{1\}), \ & \mathrm{if}\ \underset{n}{\sup} \frac{n+1}{d_{n}}= \infty. 
		\end{cases}
		\]
		\item[(ii)] If $r>1$, then
		\[ \sigma_{r}(T_{a}, h_{d}) = \sigma_p(T_a^*, h_d^*) =
		\begin{cases}
			\Phi_{a}(\overline{\mathbb{D}}_{r}), & \mathrm{if}\ \underset{n}{\sup} \frac{r^n}{d_{n}} < \infty, \\
			\Phi_{a}(\mathbb{D}_{r}), & \mathrm{if}\ \underset{n}{\sup} \frac{r^n}{d_{n}}= \infty. 
		\end{cases}
		\]
	\end{itemize}
\end{theorem}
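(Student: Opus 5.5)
Since $\sigma_p(T_a,h_d)=\emptyset$ by Proposition~\ref{pts-Ta-hd}, a point $\lambda\in\sigma(T_a,h_d)$ lies in $\sigma_r(T_a,h_d)$ exactly when $\overline{\mathcal{R}(T_a-\lambda I)}\neq h_d$. By Hahn--Banach this holds iff $\lambda\in\sigma_p(T_a^*,h_d^*)$. So the two sets in the statement coincide automatically, and the whole task is to compute the eigenvalues of the adjoint.

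By Lemma~\ref{b-c-dual} and the $AK$ property of $h_d$, we identify $h_d^*$ with $bs_d$ via $\zeta\mapsto(\zeta(e_k))_k$. Under this identification $T_a^*$ acts through the transposed matrix:
\[
(T_a^*y)_k=\sum_{n\ge k}a_{n-k}y_n .
\]
The equation $T_a^*y=\lambda y$ therefore reads
\[
\sum_{j\ge0}a_jy_{k+j}=\lambda y_k \quad\text{for all } k .
\]
We look for geometric solutions $y=(z^k)$ with $z\neq0$. For $|z|<\eta$, the radius of convergence of $\Phi_a$ (which exceeds $r$ by hypothesis), the equation becomes $z^k\Phi_a(z)=\lambda z^k$, so $\Phi_a(z)=\lambda$.

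It remains to decide when $(z^k)\in bs_d$, i.e.\ when
\[
\sup_m \frac{1}{d_m}\Big|\sum_{k=0}^m z^k\Big|<\infty .
\]
\begin{itemize}
\item If $|z|<1$, the partial sums are bounded and $d_m$ increases, so $(z^k)\in bs_d$.
\item If $|z|=1$ and $z\neq1$, then $\big|\sum_{k\le m}z^k\big|\le 2/|1-z|$, so again $(z^k)\in bs_d$.
\item If $z=1$, membership is equivalent to $\sup_n (n+1)/d_n<\infty$.
\item If $|z|>1$, the partial sums are comparable to $|z|^m$, so membership is equivalent to $\sup_m |z|^m/d_m<\infty$.
\end{itemize}
When $r=1$, $d_n$ grows subexponentially, so $|z|^m/d_m\to\infty$ for every $|z|>1$ and no such $z$ qualifies. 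When $r>1$, $|z|<r$ gives $|z|^m/d_m\to0$ (ratio test), $|z|>r$ gives divergence, and $|z|=r$ gives exactly the condition $\sup r^n/d_n<\infty$. This yields the inclusion $\supseteq$ in all four cases. For $r>1$ the case statement as written treats the whole circle $|z|=r$ uniformly, and I would check this against the computation, since only $z=r$ seems to need the extra growth condition.

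For the reverse inclusion $\subseteq$, take an eigenvector $y\in bs_d$. Write $y_n=d_nw_n-d_{n-1}w_{n-1}$ with $w\in\ell^\infty$, as in Lemma~\ref{bsd-ns}. Then $\sum_n y_n\zeta^n$ converges for $|\zeta|<1/r$, and the eigen-equation becomes a functional equation for the generating function $Y(\zeta)=\sum_n y_n\zeta^n$. Summing $\zeta^k$ times the equation over $k$ gives a Toeplitz/Hankel-type relation
\[
\Phi_a(1/\zeta)\,Y(\zeta)-(\text{polynomial-type correction})=\lambda Y(\zeta).
\]
From this I would argue that $Y$ is rational, with poles at $1/z_j$ where $\Phi_a(z_j)=\lambda$. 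Then $y$ is a finite combination of $(z_j^k)$, possibly with polynomial factors $k^pz_j^k$ at multiple roots. A combination lies in $bs_d$ only if each geometric component satisfies the membership criteria above, so $\lambda\in\Phi_a(\{z:(z^k)\in bs_d\})$.

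The main obstacle is this converse. Justifying the manipulation of $\sum_j a_jy_{k+j}$ requires convergence, which needs $|a_j|$ decaying faster than $r^{-j}$ against the growth $|y_n|\lesssim d_n$; the analyticity of $\Phi_a$ near $\overline{\mathbb{D}}_r$ supplies this. Proving finite-dimensionality of the solution space (an infinite-order difference equation) is the delicate part. I would first try the case where $a$ is finitely supported, where the equation is a genuine linear recurrence, and then extend the argument using the analyticity margin of $\Phi_a$.
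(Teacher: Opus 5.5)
Your reduction $\sigma_r(T_a,h_d)=\sigma_p(T_a^*,h_d^*)$ (via Proposition~\ref{pts-Ta-hd} and Hahn--Banach) and your membership table for $(z^k)\in bs_d$ are the paper's argument, and they correctly give the inclusion $\supseteq$ in all four cases.

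\textbf{The gap: the reverse inclusion.} You leave it as a programme. The rationality of $Y$ is only announced. The claim that a combination $\sum_j p_j(k)z_j^k$ lies in $bs_d$ only if each $(z_j^k)$ does is not argued. This direction is needed only when the relevant supremum is infinite. When $\sup_n (n+1)/d_n<\infty$ or $\sup_n r^n/d_n<\infty$, the bound $\sigma_p(T_a^*,h_d^*)\subseteq\sigma(T_a,h_d)=\Phi_a(\overline{\mathbb{D}}_r)$ from Corollary~\ref{s-Ta-hd} already gives equality; this is how the paper gets its upper bound. In the two remaining cases you must rule out a non-geometric eigenvector for values such as $\Phi_a(1)$ when $r=1$, or $\Phi_a(z_0)$ with $|z_0|=r$ not attained on $\mathbb{D}_r$ when $r>1$. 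The paper's proof only tests geometric vectors, so it does not settle this either.

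\textbf{The missing idea: spectral mapping for the point spectrum.} This closes the gap without generating functions or infinite-order recurrences.
\begin{itemize}
\item[(a)] Under $h_d^*\cong bs_d$, the adjoint $R^*$ is the left shift $L$, and $\sigma(L,bs_d)=\sigma(R,h_d)=\overline{\mathbb{D}}_r$. Taking adjoints in Theorem~\ref{T_a-phi-R} gives $T_a^*=\Phi_a(L)$.
\item[(b)] Fix $\lambda$ and choose $r<r'<\eta$. Since $a_1\neq 0$, $\Phi_a-\lambda$ is nonconstant, so it has finitely many zeros $z_1,\dots,z_p$ (multiplicities $m_j$) in $\mathbb{D}_{r'}$.
\item[(c)] Write $\Phi_a(z)-\lambda=g(z)\prod_j(z-z_j)^{m_j}$ with $g$ holomorphic and zero-free on $\mathbb{D}_{r'}\supset\sigma(L)$. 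Then $g(L)$ is invertible, so $\ker(T_a^*-\lambda I)=\ker\prod_j(L-z_j)^{m_j}$.
\item[(d)] The factors commute, so this kernel is nonzero iff some $L-z_j$ fails to be injective on $bs_d$, i.e.\ iff $(z_j^k)\in bs_d$ for some $j$.
\end{itemize}
Hence $\sigma_p(T_a^*,bs_d)=\Phi_a(\{z:(z^k)\in bs_d\})$, which gives both inclusions at once, and your table finishes the proof. This also confirms your guess that eigenvectors are finite sums of terms $k^pz_j^k$.

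Your generating-function route can probably be completed. Note that the correction term is not polynomial but holomorphic on $|\zeta|>1/\eta$. Gluing $Y$ with (correction)$/(\Phi_a(1/\zeta)-\lambda)$ across the annulus $1/\eta<|\zeta|<1/r$ yields a rational function. You would still need the separation step, e.g.\ by applying polynomials in $L$, which leaves $bs_d$ invariant. It is a detour compared with the factorization.

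\textbf{The circle $|z|=r$.} Your closing doubt is unfounded. For $|z|=r>1$ one has $\bigl|\sum_{k\le m}z^k\bigr|\ge (r^{m+1}-1)/(r+1)$. So by your own fourth bullet, every point of that circle requires $\sup_n r^n/d_n<\infty$, exactly as the statement says. The point $z=1$ is exceptional only when $r=1$.
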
	

\begin{proof}
Since $T_a \in \mathcal{B}(h_d),$ $T_a$ has dense range if and only if $T_a^*$ is one-one \cite[p. 59]{goldberg2006unbounded}. Due to this fact, we have 
\[\sigma_{r}(T_a, h_d) = \sigma_{p}(T_a^*,h_d^*)\setminus \sigma_{p}(T_a,h_d) = \sigma_{p}(T_a^*,h_d^*).\]

Hence, we need to find the point spectrum $\sigma_{p}(T_a^*,h_d^*).$ Let $ \lambda \in \mathbb{C} $ such that $T_{a}^{*} x = \lambda x $ for some nonzero $x$. Then we have 
	\begin{align*} 
		\begin{bmatrix}
			a_{0} & a_{1} & a_{2} & a_{3} & a_{4} & \cdots \\
			0 & a_{0} & a_{1} & a_{2} & a_{3} & \cdots \\
			0 &   0   & a_{0} & a_{1} & a_{2} & \cdots \\
			0 &   0   &   0   & a_{0} & a_{1} & \cdots \\
			\vdots & \vdots& \vdots& \vdots& \vdots& \ddots 
		\end{bmatrix}
		\begin{bmatrix}
			x_{0} \\
			x_{1} \\
			x_{2} \\
			x_{3} \\
			\vdots
		\end{bmatrix}
		= \lambda
		\begin{bmatrix}
			x_{0} \\
			x_{1} \\
			x_{2} \\
			x_{3} \\
			\vdots
		\end{bmatrix} .
	\end{align*}
	This gives the following system of equations.
	\begin{align*}
		&\sum_{k=0}^{\infty}a_{k}x_{k} = \lambda x_{0}\\
		&\sum_{k=0}^{\infty}a_{k}x_{k+1} = \lambda x_{1}\\
		&\sum_{k=0}^{\infty}a_{k}x_{k+2} = \lambda x_{2}\\ 
		&\sum_{k=0}^{\infty} a_{k}x_{k+3} = \lambda x_{3}\\
		& \hspace{19mm}  \vdots
	\end{align*}
	Let $x_{0} \neq 0$ and $x_{n}= c^n x_{0}$ for some $c \in \mathbb{C}$. Then $x = \big(x_{0}, c x_{0}, c^2 x_{0}, \cdots \big)$. If $x \in bs_{d}$ then $\lambda_{c} = \Phi_{a}(c) = \sum_{n=0}^{\infty}a_{n}c^n$ will be an eigenvalue and $x$ will be the corresponding eigenvector. But $x = \big(x_{0}, c x_{0}, c^2 x_{0}, \cdots \big) \in bs_{d}$ if and only if $\underset{n}{\sup} \ \frac{|x_{0}|}{d_{n}} \Big|1 + c + c^2 + \cdots + c^n\Big| < \infty$. Consider the following two cases.

	\textbf{Case 1:} $\underset{n \to \infty}{\lim }\frac{d_{n+1}}{d_{n}}=r =1$. In this setting, 
    \[\sigma_r(T_a, h_d) = \sigma_p(T^*_a, h^*_d) \setminus \sigma_p(T_a, h_d) = \sigma_p(T^*_a, h^*_d) \subseteq \sigma(T_a, h_d) = \Phi_a(\overline{\mathbb{D}}).\]
    Hence, $|c| \leq 1$ is necessary for $\lambda_c = \Phi_{a}(c) \in \sigma_p(T^*_a, h^*_d).$ If $|c| \leq 1$ with $c \neq 1,$ then 
    
	\begin{align*}
		& \underset{n}{\sup} \ \frac{|x_{0}|}{d_{n}} \big|1 + c + c^2 + \cdots + c^n\big| \\
		=\ & \underset{n}{\sup} \ \frac{|x_{0}|}{d_{n}} \Bigg|\frac{1 - \ c^{n+1}}{1 - c}\Bigg| \\
		\leq\ & \underset{n}{\sup} \ \frac{|x_{0}|}{d_{n}} \frac{2}{|1 - c |} < \infty.
	\end{align*}
	Hence $\lambda_{c} \in \sigma_{p}(T_{a}^{*},bs_{d})$. Also, for $c=1,$ it follows
	\begin{align*}
		&\underset{n}{\sup} \ \frac{|x_{0}|}{d_{n}} \Big|1+ c+ c^2 + \cdots + c^{n}\Big| \\
		= \ & |x_{0}| \ \underset{n}{\sup} \ \frac{n+1}{d_{n}}.
	\end{align*}
    Hence, 
    \[\lambda_1 = \Phi_a(1) \in \sigma_p(T^*_a, h^*_d) \Leftrightarrow \underset{n}{\sup} \ \frac{n+1}{d_{n}} < \infty.\]
    This proves the result for $r=1.$

	\textbf{Case 2:} $\underset{n \to \infty}{\lim }\frac{d_{n+1}}{d_{n}}=r >1$.

In this case, we have $|c| \leq r.$ If $|c| \leq 1,$ then 
\[\underset{n}{\sup} \ \frac{|x_{0}|}{d_{n}} \Big|1 + c + c^2 + c^{n}\Big| \leq  |x_{0}|\  \underset{n}{\sup} \ \frac{n+1}{d_{n}} < \infty,\] 
as the sequence $\big(\frac{n+1}{d_{n}}\big)$ converges to $0$ as $n$ tends to $\infty$. Also, if $1< |c| \leq r$, then
\begin{align*}
		\underset{n}{\sup} \ \frac{|x_{0}|}{d_{n}} \Big|1 + c + c^2 + \cdots\ + c^{n}\Big| & = \ \underset{n}{\sup} \ \frac{|x_{0}|}{d_{n}} \Bigg|\frac{c^{n+1} - 1 }{c - 1}\Bigg| < \infty \\
		& \iff \underset{n}{\sup} \ \frac{|x_{0}|}{d_{n}} \frac{|c|}{|c - 1|} |c|^n < \infty \\
		& \iff \underset{n}{\sup}\ \frac{|c|^n}{d_{n}} < \infty.
	\end{align*}
Clearly, if $|c| < r$ then $\underset{n}{\sup}\ \frac{|c|^n}{d_{n}} < \infty$ and consequently $\lambda_c = \Phi_a(c) \in \sigma_p(T^*_a, h^*_d).$ Also, if $|c| = r$, then $\lambda_{c} \in \sigma_{p}(T_{a}^{*}, bs_{d})$ if and only if $ \underset{n}{\sup}\ \frac{r^n}{d_{n}}$ is finite. Hence, the result follows.
\end{proof}

\begin{corollary} \label{cs-Ta-hd}
	Let $T_{a}$ be a bounded linear operator on $h_{d}$, where $d= (d_{n})$ is a monotonic increasing, and unbounded sequence of positive real numbers such that $\underset{n \to \infty}{\lim} \frac{d_{n+1}}{d_{n}} =r$ and $\Phi_{a}(z)$ is analytic on some neighbourhood containing $\overline{\mathbb{D}}_{r}$. Then we have the following results. \begin{itemize}
		\item[(i)] If $r=1$, then
		\[ \sigma_{c}(T_{a}, h_{d}) =
		\begin{cases}
			\phi, & \mathrm{if } \ \underset{n}{\sup} \frac{n+1}{d_{n}} < \infty, \\
			\Phi_{a}(\overline{\mathbb{D}})\setminus \Phi_{a}(\overline{\mathbb{D}}\setminus\{1\}), & \mathrm{if } \ \underset{n}{\sup} \frac{n+1}{d_{n}}= \infty. 
		\end{cases}
		\]
		\item[(ii)] If $r>1$, then
		\[ \sigma_{c}(T_{a}, h_{d}) =
		\begin{cases}
			\phi, & \mathrm{if}\ \underset{n}{\sup} \frac{r^n}{d_{n}} < \infty, \\
			\Phi_{a}(\overline{\mathbb{D}}_{r}) \setminus \Phi_{a}(\mathbb{D}_{r}), & \mathrm{if}\ \underset{n}{\sup} \frac{r^n}{d_{n}}= \infty. 
		\end{cases}
		\]
	\end{itemize}
\end{corollary}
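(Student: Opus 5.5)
The plan is to read off $\sigma_c(T_a,h_d)$ from the disjoint decomposition
\[
\sigma(T_a,h_d)=\sigma_p(T_a,h_d)\cup\sigma_r(T_a,h_d)\cup\sigma_c(T_a,h_d),
\]
using three results already in hand. Corollary~\ref{s-Ta-hd} gives the whole spectrum, $\sigma(T_a,h_d)=\Phi_a(\overline{\mathbb{D}}_r)$; this is available because $\Phi_a$ is analytic on a neighbourhood of $\overline{\mathbb{D}}_r$ and Theorem~\ref{T_a-phi-R} identifies $T_a=\Phi_a(R)$. Proposition~\ref{pts-Ta-hd} gives $\sigma_p(T_a,h_d)=\emptyset$. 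Since the three parts are pairwise disjoint, we obtain
\[
\sigma_c(T_a,h_d)=\sigma(T_a,h_d)\setminus\sigma_r(T_a,h_d)=\Phi_a(\overline{\mathbb{D}}_r)\setminus\sigma_r(T_a,h_d).
\]
It then only remains to substitute the four cases of Theorem~\ref{rs-Ta-hd}.

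\textbf{Case $r=1$.} If $\sup_n\frac{n+1}{d_n}<\infty$, then Theorem~\ref{rs-Ta-hd}(i) says $\sigma_r(T_a,h_d)=\Phi_a(\overline{\mathbb{D}})$. This is the whole spectrum, so $\sigma_c(T_a,h_d)=\emptyset$. If instead $\sup_n\frac{n+1}{d_n}=\infty$, then $\sigma_r(T_a,h_d)=\Phi_a(\overline{\mathbb{D}}\setminus\{1\})$, and the difference is exactly $\Phi_a(\overline{\mathbb{D}})\setminus\Phi_a(\overline{\mathbb{D}}\setminus\{1\})$. This set is either empty or $\{\Phi_a(1)\}$, according to whether $\Phi_a(1)$ is attained at some other point of $\overline{\mathbb{D}}$.

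\textbf{Case $r>1$.} The argument is identical, using Theorem~\ref{rs-Ta-hd}(ii). If $\sup_n\frac{r^n}{d_n}<\infty$, the residual spectrum is the full spectrum, so $\sigma_c(T_a,h_d)=\emptyset$. Otherwise $\sigma_c(T_a,h_d)=\Phi_a(\overline{\mathbb{D}}_r)\setminus\Phi_a(\mathbb{D}_r)$.

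The only step needing care is the bookkeeping behind the subtraction. The identity $\sigma_r(T_a,h_d)=\sigma_p(T_a^*,h_d^*)$ used in Theorem~\ref{rs-Ta-hd} relies on the dense-range criterion together with the emptiness of $\sigma_p(T_a,h_d)$. Both must be in force here, and they are, since the present hypotheses are exactly those of Theorem~\ref{rs-Ta-hd}. The set difference of images must also be taken literally, not as the image of the set difference, because $\Phi_a$ need not be injective. I expect no genuine obstacle: the corollary is a direct set-theoretic consequence of the earlier results.
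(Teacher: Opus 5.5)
Your proposal is correct and follows the paper's own argument. The paper likewise obtains $\sigma_c(T_a,h_d)$ by removing the empty point spectrum (Proposition~\ref{pts-Ta-hd}) and the residual spectrum of Theorem~\ref{rs-Ta-hd} from $\sigma(T_a,h_d)=\Phi_a(\overline{\mathbb{D}}_r)$ (Corollary~\ref{s-Ta-hd}), using that the three parts of the spectrum are disjoint. Your side remarks are also correct: the difference of images must be taken literally, and in case (i) it is either empty or $\{\Phi_a(1)\}$.
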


\begin{proof}
	The result follows from the fact that for a bounded linear operator on a Banach space the fine spectrum forms a disjoint partition.
\end{proof}

We conclude this section by providing some example to supplement our results.

\begin{example} In this example, we illustrate how the residual spectrum $\sigma_r(T_a, h_d)$ and continuous spectrum $\sigma_c(T_a, h_d)$ depends on the weight vector $d$.
	\begin{itemize}

        \item[(i)] Let $d_{n} = n+1$. Then $r = \underset{n \to \infty}{\lim} \frac{d_{n+1}}{d_{n}}=1,$ and $\underset{n}{\sup} \frac{n+1}{d_{n}} = 1 $. If $\Phi_{a}(z)$ is analytic on some neighbourhood of $\overline{\mathbb{D}}$, then  $\sigma_r(T_a, h_d)  = \sigma(T_{a},h) = \Phi_a(\overline{\mathbb{D}})$ and $ \sigma_c(T_a, h_d)  = \phi$. 

        \item[(ii)] Let $d_{n} = \sqrt{n+1}$. Then $r = \underset{n \to \infty}{\lim} \frac{d_{n+1}}{d_{n}}=1,$ and $\underset{n}{\sup} \frac{n+1}{d_{n}}= \infty$. If $\Phi_{a}(z)$ is analytic on some neighbourhood of $\overline{\mathbb{D}}$, then  $\sigma_r(T_a, h_d)  = \Phi_a(\overline{\mathbb{D}} \setminus \{1\})$ and $ \sigma_c(T_a, h_d)  = \Phi_a(\overline{\mathbb{D}}) \setminus \Phi_a(\overline{\mathbb{D}} \setminus \{1\})$.

        \item[(iii)] Let $d_{n}= (n+1) 3^n$. Then $r = \underset{n \to \infty}{\lim} \frac{d_{n+1}}{d_{n}}=3,$ and $\underset{n}{\sup} \frac{r^n}{d_{n}} = 1.$  If $\Phi_{a}(z)$ is analytic on some neighbourhood of $\overline{\mathbb{D}}_{3}$, then $\sigma_r(T_a, h_d)  = \Phi_a(\overline{\mathbb{D}}_3)$ and $ \sigma_c(T_a, h_d)  = \phi$.
        
		\item[(iv)] Let $d_{n} = \frac{3^n}{n+1}$. Then $r = \underset{n \to \infty}{\lim} \frac{d_{n+1}}{d_{n}}=3,$ and $\underset{n}{\sup} \frac{r^n}{d_{n}}= \infty$. If $\Phi_{a}(z)$ is analytic on some neighbourhood of $\overline{\mathbb{D}}_{3}$, then  $\sigma_r(T_a, h_d)  = \Phi_a(\mathbb{D}_3)$ and $\sigma_c(T_a, h_d)  = \Phi_a(\overline{\mathbb{D}}_3) \setminus \Phi_a(\mathbb{D}_3)$. 
	\end{itemize}
\end{example}	

\begin{example}
    Consider the generating sequence $a = (a_n)$ as
    \[a_{n}=\begin{cases}
    1, \ \mathrm{if}\ n=0,1,3,9;\\
    0, \ \mathrm{otherwise}.
\end{cases}\]
Also consider any weight vector $d_n = n+1.$ Then $a \in h = \mathscr{M}(h),$ and $\Phi_a(z) = 1+z+z^3+z^9.$ In this case, we have
\begin{enumerate}
    \item[(i)] $\sigma(T_a, h) = \sigma_r(T_a, h) = \Phi_a(\overline{\mathbb{D}}),$
    \item[(ii)] $\sigma_p(T_a, h) = \sigma_c(T_a, h) = \phi.$
\end{enumerate}
\end{example}

\begin{example}
Consider the generating sequence $b=(b_{n})$ where $b_{n} = \frac{1}{n!}$ for all $n \in \mathbb{N}_{0}$. Also consider any weight vector $d_n = n+1.$ Then $b \in h = \mathscr{M}(h),$ and $\Phi_{b}(z) = e^z$.
In this case, we have
\begin{enumerate}
    \item[(i)] $\sigma(T_b, h) = \sigma_r(T_b, h) = \Phi_b(\overline{\mathbb{D}}),$
    \item[(ii)] $\sigma_p(T_b, h) = \sigma_c(T_b, h) = \phi.$
\end{enumerate}
\end{example}

The figures $\sigma(T_a, h)= \Phi_a(\overline{\mathbb{D}})$ and $\sigma(T_b, h)= \Phi_b(\overline{\mathbb{D}})$ are given below.

\begin{figure}[htbp]
  \centering
  \begin{subfigure}{0.49\textwidth}
    \centering
    \includegraphics[width=\linewidth]{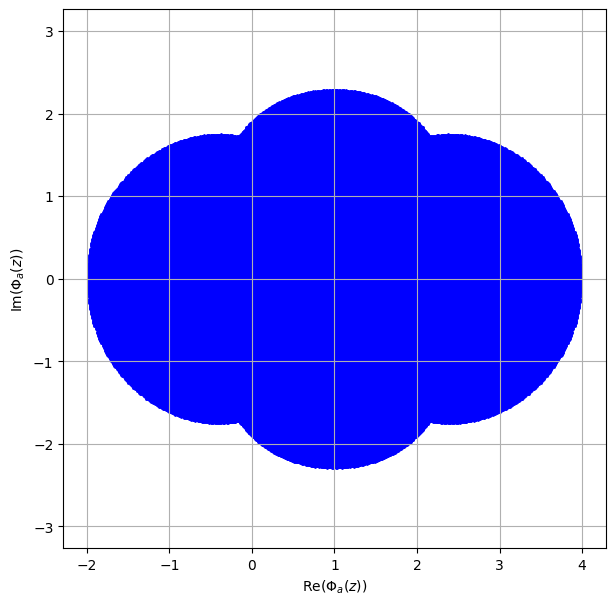}
    \caption{Image of closed unit disk under $\Phi_a(z)$}
    \label{fig:img1}
  \end{subfigure}
  \hfill
  \begin{subfigure}{0.5\textwidth}
    \centering
    \includegraphics[width=\linewidth]{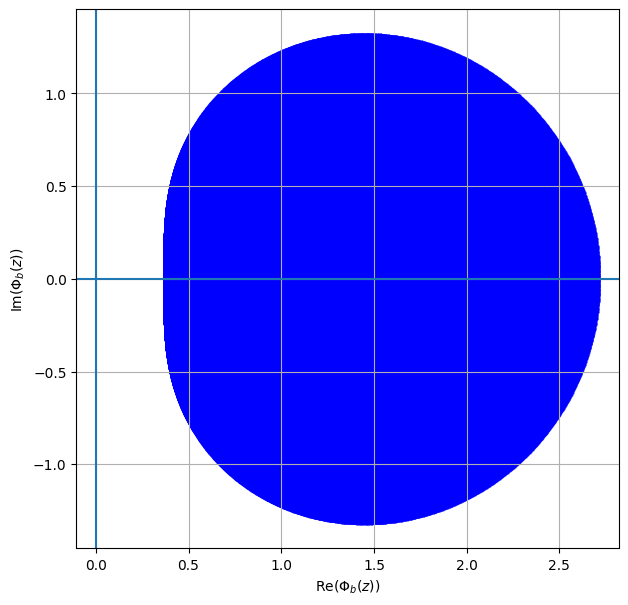}
    \caption{Image of closed unit disk under $\Phi_b(z)$}
    \label{fig:img2}
  \end{subfigure}
  \caption{$\sigma(T_a, h)$ and $\sigma(T_b, h)$}
  \label{fig:twoside}
\end{figure}

\section*{Declarations}
\begin{itemize}
\item Availability of data and materials: Not applicable.
\item Competing interests: The authors declare that they have no competing interests.
\item Funding: Not applicable.
\item Authors' contributions: Both the authors contribute equally to this work.
\item Acknowledgments: Mr. Sayan Saha thanks the UGC, Govt. of India, for financial support in the form of fellowship (NTA Ref. No.: 221610024673).
\end{itemize}

\bibliographystyle{sn-mathphys-num}
\bibliography{Bibliography_All}% common bib file
%% if required, the content of .bbl file can be included here once bbl is generated
%%\input sn-article.bbl

\end{document}